\def\today{\number\day\space\ifcase\month\or   January\or February\or
   March\or April\or May\or June\or   July\or August\or September\or
   October\or November\or December\fi\   \number\year}
\theoremstyle{definition}
\newtheorem{thm}{Theorem}[section]
\newtheorem{lem}[thm]{Lemma}
\newtheorem{prp}[thm]{Proposition}
\newtheorem{dfn}[thm]{Definition}
\newtheorem{cor}[thm]{Corollary}
\newtheorem{rmk}[thm]{Remark}
\renewcommand{\qed}{\rule{0.4em}{2ex}}
\newcommand{\beq}{\begin{equation}}
\newcommand{\eeq}{\end{equation}}
\newcommand{\beqr}{\begin{eqnarray*}}
\newcommand{\eeqr}{\end{eqnarray*}}
\newcommand{\bal}{\begin{align*}}
\newcommand{\eal}{\end{align*}}
\newcommand{\bei}{\begin{itemize}}
\newcommand{\eei}{\end{itemize}}
\newcommand{\ep}{\varepsilon}
\newcommand{\ph}{\varphi}
\newcommand{\Z}{{\mathbb{Z}}}
\newcommand{\R}{{\mathbb{R}}}
\newcommand{\C}{{\mathbb{C}}}
\newcommand{\N}{{\mathbb{N}}}
\newcommand{\tsr}{{\mathrm{tsr}}}
\newcommand{\ca}{C*-algebra}
\newcommand{\Aut}{{\mathrm{Aut}}}
\newcommand{\Index}{{\mathrm{Index}}}
\newcommand{\dist}{{\mathrm{dist}}}
\title[The Jiang-Su absorption for inclusions of unital C*-algebras]{The Jiang-Su absorption for inclusions of unital C*-algebras}
\author{Hiroyuki Osaka$^*$}
\date{28 April, 2014}
\thanks{$^*$Research of the first author partially supported by the JSPS grant for Scientific Research No.23540256}
\address{ Department of Mathematical Sciences\\
  Ritsumeikan University\\ Kusatsu, Shiga, 525-8577  Japan}
\email[]{osaka@se.ritsumei.ac.jp}
\address{}
\author{Tamotsu Teruya}
\address{Faculty of Education, Gunma University, 4-2 Aramaki-machi,
Maebashi City, Gunma, 371-8510, Japan}
\email[]{teruya@gunma-u.ac.jp}
\keywords{Jiang-Su absorption, Inclusion of C*-algebras, strictly comparison}
\subjclass[2000]{Primary 46L55; Secandary 46L35.}
\begin{document}
\maketitle

\begin{abstract}
We introduce the tracial Rokhlin property for a conditional expectation for an inclusion 
of unital C*-algebras $P \subset A$ with index finite, and show that an action $\alpha$ 
from a finite group $G$ on a simple unital C*-algebra $A$ has the tracial Rokhlin property 
in the sense of N.~C.~Phillips 
if and only if the canonical conditional expectation $E\colon A \rightarrow A^G$ has the tracial
Rokhlin property. 
Let $\mathcal{C}$ be  a class of infinite dimensional stably finite separable unital C*-algebras
which is closed under the following conditions:
\begin{enumerate}
\item[$(1)$]
$A \in {\mathcal C}$ and $B \cong A$, then $B \in \mathcal{C}$.
\item[$(2)$]
If $A \in \mathcal{C}$ and $n \in \N$, then $M_n(A) \in \mathcal{C}$.
\item[$(3)$] 
If $A \in \mathcal{C}$ and $p \in A$ is a nonzero projection, 
then $pAp \in \mathcal{C}$.
\end{enumerate}
We prove that for any simple unital C*-algebra $A$ which is also local tracial $\mathcal{C}$-algebra in the sense of Fan and Fang, 
if any C*-algebra in $\mathcal{C}$ is weakly semiprojective and $E\colon A \rightarrow P$ is 
of finite index type with the tracial Rokhlin property, $P$ is a unital local tracially  $\mathcal{C}$-algebra. 
The main result is that if  $A$ is simple, separable, unital nuclear, the Jiang-Su absorbing 
and $E\colon A \rightarrow P$ has the trcial Rokhlin poperty, then $P$ is the Jiang-Su absorbing.
As an application, when an action $\alpha$ 
from a finite group $G$ on a simple unital C*-algebra $A$ has the tracial Rokhlin property, 
then for any subgroup $H$ of $G$ the fixed point algebra $A^H$ and the crossed product algebra 
$A \rtimes_{\alpha_{|H}} H$ has the Jiang-Su absorbing.
We also show that the strictly comparision property for a Cuntz semigroup $W(A)$ 
is hereditary to $W(P)$ if  $A$ is simple, separable, exact, unital, 
and $E\colon A \rightarrow P$ has the trcial Rokhlin property. 
\end{abstract}




\section{Introduction}
The purpose of this paper is to introduce the tracial Rokhlin property for an
inclusion of separable simple unital C*-algebras $P \subset A$ with finite index 
in the sense of \cite{Watatani:index}, and prove theorems of the following type. 
Suppose that $A$ belongs to a class of C*-algebras characterized by 
some structural property, such as tracial rank zero in the sense of \cite{Lin:tracial}. 
Then $P$ belongs to the same class. 
The classes we consider include:

\begin{itemize}
\item Simple C*-algebras with real rank zero or stable rank one.
\item Simple C*-algebras with tracial rank zero or tracial rank less than or equal to one.
\item Simple C*-algebras with the Jiang-Su algebra $\mathcal{Z}$ absorption.
\item Simple C*-algebras for which the order on projections is determined by traces.
\item Simple C*-algebras with the strict comparison property for the Cuntz semigroup.
\end{itemize}

The 3rd condition and 5th condition are important properties related to 
Toms and Winter's conjecture, that is, the properties of strict
comparison, finite nuclear dimension, and Z-absorption are equivalent for separable
simple infinite-dimensional nuclear unital C*-algebras (\cite{T} and \cite{WZ}).

We show that an action $\alpha$ from a finite group $G$ on a simple unital C*-algebra 
$A$ has the tracial Rokhlin property in the sense of \cite{Phillips:tracial} if and only if the 
canonical conditional expectation $E\colon A \rightarrow A^G$ has the tracial Rokhlin property 
for an inclusion $A^G \subset A$. When an action $\alpha$ from a finite group 
on a (not necessarily simple) unital C*-algebra has the Rokhlin property in the sense of \cite{Izumi:Rohlin1}, 
all of the above results are proved in \cite{OT}\cite{OT2}.

The essential observation is already done in the proof of Theorem 2.2 of \cite{Phillips:tracial} the crossed product 
$A \rtimes_\alpha G$ ($ = C^*(G, A, \alpha)$ in \cite{Phillips:tracial}) has a local approximation property by C*-algebras 
stably isomorphic to homomorphic images of $A$. 
Since the Jiang-Su algebra $\mathcal{Z}$ belongs to classes of direct limits of semiprojective building 
blocks in \cite{Jiang-Su:absorbing}, technical difficulties arises because we must treat arbitrary homomorphic images 
in the approximation property. (Homomorphic images of semiprojective C*-algebras need not be semiprojective.)
In \cite{YF} they introduced the unital local tracial 
$\mathcal{C}$ property, which is generalized one of a local $\mathcal{C}$ property in \cite{OP:Rohlin}, 
for proving  that a C*-algebra with a local approximation property by 
homomorphic images of a suitable class $\mathcal{C}$ of semiprojective C*-algebras can be written as a direct limit of algebras in the class. 
When each homomorphism is injective, the unital local tracial $\mathcal{C}$ property is equivalent to the 
tracial approximation property in \cite{EN}. 
Note that when an action $\alpha$ from a finite group $G$ on a simple unital 
C*-algebra $A$, the tracial approximation property for $A$ is inherited to the crossed product algebra $A \rtimes_\alpha G$.
(See \cite{YF} and Theorem~3.3.)

We know of several results like those above  for tracial approximation in the literature:
for stable rank one (Theorem~4.3 of \cite{EN} and \cite{FF}), for real rank zero (\cite{FF}),
for the $\mathcal{Z}$-absorption (Corollary~5.7 of \cite{HO}), 
for the Blackadar comparson property (Theorem~4.12 in \cite{EN}).

The paper is organized as follows. In section 2 we introduce the notion of a unital local tracial $\mathcal{C}$-algebra and 
a tracial approximation class (TA$\mathcal{C}$ class) and we show in section 3 that when an action $\alpha$ 
from a finite group $G$ on a simple unital C*-algebra $A$ has the tracial Rokhlin property, then the crossed product algebra 
$A \rtimes_\alpha G$ belongs to the class TA$\mathcal{C}$ for $A$ in TA$\mathcal{C}$. 
In section 4 we introduce the tracial Rokhlin property for an inclusion $P \subset A$ of unital C*-algebras and 
show that if $A$ is a simple local tracial $\mathcal{C}$-algebra, then so is $P$ (Theorem~4.11).
In particular, if $A$ has tracial topological rank zero (resp. less than or equal to one), so does $P$ (Corollary~4.12).
In section 5 we preset  a main theorem, that is, let an inclusion $P \subset A$ of 
separable simple nuclear unital C*-algebras be of index finite type  with the tracial Rokhlin property, 
and suppose that $A$ is $\mathcal{Z}$-absorbing, then so does $P$ (Theorem~5.4). 
As an application, any fixed point algebra $A^H$ for any subgroup $H$ of a finite group $G$ is $\mathcal{Z}$-absorbing under the 
assumption that an action $\alpha$ from $G$ on a simple nuclear unital C*-algebra $A$ 
such that $A$ is $\mathcal{Z}$-absorbing (Corollary~5.5). 
Before treating the strict comparison for a Cuntz semigroup, 
we consider the Cuntz equivalent for positive elements and show that under the assumption that  an inclusion $P \subset A$ of 
unital C*-algebras has the tracial Rokhlin property, if for $n \in \N$ and positive elements $a, b \in M_n(P)$ 
$a$ is subequivalent to $b$ in $M_n(A)$, then $a$ is subequivalent to $b$ in $M_n(P)$ (Proposition~6.2).
Finally, we consider the strict comparison property for a Cuntz 
semigroup and show that the strict comparison property is inherited to $P$ when an inclusion $P \subset A$ of 
simple separable exact unital C*-algebras has the tracial Rokhlin property and $A$ has the strict comparison (Theore~7.2). 
Using the similar argument we show that if an inclusion $P \subset A$ of separable simple unital C*-algebras 
has the tracial Rokhlin property and  the order on projections in $A$ is determined by traces, then 
the order on projections in $P$ is determined by traces (Corollary~7.3).

\section{Local tracial $\mathcal{C}$-algebra}

We recall the definition of local $\mathcal{C}$-property in 
\cite{OP:Rohlin}.

\begin{dfn}\label{D:FSat}
Let ${\mathcal{C}}$ be a class of separable unital C*-algebras.
Then ${\mathcal{C}}$ is {\emph{finitely saturated}}
if the following closure conditions hold:
\begin{enumerate}
\item\label{D:FSat:1}
If $A \in {\mathcal{C}}$ and $B \cong A,$ then $B \in {\mathcal{C}}.$
\item\label{D:FSat:2}
If $A_1, A_2, \ldots, A_n \in {\mathcal{C}}$ then
$\bigoplus_{k=1}^n A_k \in {\mathcal{C}}.$
\item\label{D:FSat:3}
If $A \in {\mathcal{C}}$ and $n \in \N,$
then $M_n (A) \in {\mathcal{C}}.$
\item\label{D:FSat:4}
If $A \in {\mathcal{C}}$ and $p \in A$ is a nonzero projection,
then $p A p \in {\mathcal{C}}.$
\end{enumerate}
Moreover,
the {\emph{finite saturation}} of a class ${\mathcal{C}}$ is the
smallest finitely saturated class which contains ${\mathcal{C}}.$
\end{dfn}

\vskip 3mm

\begin{dfn}\label{D:LC}
Let ${\mathcal{C}}$ be a class of separable unital C*-algebras.
A {\emph{unital local ${\mathcal{C}}$-algebra}}
is a separable unital C*-algebra $A$
such that for every finite set $S \subset A$ and every $\ep > 0,$
there is a C*-algebra $B$ in the finite saturation of ${\mathcal{C}}$
and a unital *-homomorphism $\ph \colon B \to A$
(not necessarily injective)
such that $\dist (a, \, \ph (B)) < \ep$ for all $a \in S.$
\end{dfn}

\vskip 3mm

When $B$ in Definition~\ref{D:LC} is non unital, we perturb the condition as follows:

\vskip 3mm

\begin{dfn}\label{D:TC}
Let ${\mathcal{C}}$ be a class of separable unital C*-algebras. 
\begin{itemize}
\item[(i)]
A unital C*-algebra $A$ is said to be a unital local tracial $\mathcal{C}$-algebra 
(\cite{YF}) if 
for every finite set $\mathcal{F} \subset A$ and every $\ep > 0,$ and any non-zero $a \in A^+$,
there exist a non-zero projection $p \in A$, a sub C*-algebra $C \subset A$, and 
*-homomorphism  $\varphi \colon C \to A$  
such that $C \in \mathcal{C}$, $\varphi(1_C) = p$, and for all $x \in \mathcal{F}$, 
\begin{enumerate}
\item
$\|xp - px\| < \varepsilon$,
\item
$pxp \in_\varepsilon \varphi(C)$, and
\item
$1 - p$ is Murray-von Neumann equivalent to a projection in $\overline{aAa}$.
\end{enumerate}

\item[(ii)]
A unital C*-algebra $A$ is said to belong to the class TA$\mathcal{C}$ (\cite{EN})  if 
for every finite set $\mathcal{F} \subset A$ and every $\ep > 0,$ and any non-zero $a \in A^+$,
there exist a non-zero projection $p \in A$ and a sub C*-algebra $C \subset A$ 
such that $C \in \mathcal{C}$, $1_C = p$, and for all $x \in \mathcal{F}$, 
\begin{enumerate}
\item
$\|xp - px\| < \varepsilon$,
\item
$pxp \in_\varepsilon C$, and
\item
$1 - p$ is Murray-von Neumann equivalent to a projection in $\overline{aAa}$.
\end{enumerate}
\end{itemize}
\end{dfn}

\vskip 3mm

\begin{rmk}
\begin{enumerate}
\item When a unital C*-algebra $A$ is a unital local tracial $\mathcal{C}$-algebra and 
each $\varphi(C) \in \mathcal{C}$, $A$ belongs to the class TA$\mathcal{C}$.
\item
If $\mathcal{C}$ is the set of finite deimensional C*-algebras $\mathscr F$,
and  the class of interval algebras $\mathscr I$, respectively, then  a local TA${\mathscr F}$-algebra 
belongs to the class of C*-algebras of tracially AF C*-algebras (\cite{Lin:tracial}), and a local
TA${\mathscr I}$-algebra belongs to the class of tracial topological one (TAI- algebras) 
(\cite{Lin:tracial one}) in the sense of Lin, respectively.
\end{enumerate}
\end{rmk}

\vskip 3mm

We have the following relation between the local $\mathcal{C}$ property and the local
tracial approximational $\mathcal{C}$ property.

Recall that a C*-algebra $A$ is said to have the {\it Property (SP)}
if any nonzero hereditary C*-subalgebra of $A$ has a nonzero projection.

\vskip 1mm 

\begin{prp}\label{P:Local C vs Tracial}
Let $\mathcal{C}$ be a finite saturated set and 
$A$ be a local tracial $\mathcal{C}$-algebra. 
Then $A$ has the property (SP) or $A$ is a local $\mathcal{C}$ algebra.
\end{prp}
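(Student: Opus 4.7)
The plan is to prove the contrapositive dichotomy: assuming $A$ fails to have Property (SP), I will show $A$ is a local $\mathcal{C}$-algebra in the sense of Definition~\ref{D:LC}. So suppose there exists a nonzero hereditary C*-subalgebra $B_0 \subset A$ which contains no nonzero projection. Pick any nonzero positive element $a_0 \in B_0$. Since $B_0$ is hereditary and contains $a_0$, the hereditary subalgebra generated by $a_0$ satisfies $\overline{a_0 A a_0} \subset B_0$, and in particular $\overline{a_0 A a_0}$ contains no nonzero projection.

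Now fix an arbitrary finite set $\mathcal{F} \subset A$ and $\varepsilon > 0$. The strategy is to apply the local tracial $\mathcal{C}$ hypothesis to $(\mathcal{F}, \varepsilon)$ using precisely the element $a = a_0$ as the ``test'' positive element. This produces a nonzero projection $p \in A$, a C*-algebra $C \in \mathcal{C}$, and a $*$-homomorphism $\varphi \colon C \to A$ with $\varphi(1_C) = p$ such that
\begin{itemize}
\item[(1)] $\|xp - px\| < \varepsilon$ for all $x \in \mathcal{F}$,
\item[(2)] $pxp \in_\varepsilon \varphi(C)$ for all $x \in \mathcal{F}$,
\item[(3)] $1 - p$ is Murray-von Neumann equivalent to a projection in $\overline{a_0 A a_0}$.
\end{itemize}
The entire point of choosing $a_0$ to lie in a projectionless hereditary subalgebra is to use (3): since $\overline{a_0 A a_0}$ has no nonzero projection, the projection to which $1-p$ is Murray-von Neumann equivalent must be zero, whence $1 - p = 0$, i.e., $p = 1$.

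With $p = 1$, the map $\varphi \colon C \to A$ is unital, condition (1) is vacuous, and condition (2) reduces to $\dist(x, \varphi(C)) < \varepsilon$ for every $x \in \mathcal{F}$. Because $\mathcal{C}$ is finitely saturated, $C$ lies in the finite saturation of $\mathcal{C}$, and we have exhibited the unital $*$-homomorphism required by Definition~\ref{D:LC}. Hence $A$ is a local $\mathcal{C}$-algebra, completing the proof of the dichotomy.

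There is no substantive obstacle; the argument is essentially a bookkeeping observation. The only point requiring a moment's care is the inclusion $\overline{a_0 A a_0} \subset B_0$, which uses that $\overline{a_0 A a_0}$ is the smallest hereditary subalgebra containing $a_0$ and therefore sits inside any hereditary subalgebra containing $a_0$. Once this is noted, the failure of (SP) exactly cancels the ``tracial remainder'' $1 - p$ appearing in Definition~\ref{D:TC}(i), converting a tracial local approximation into an honest unital local approximation.
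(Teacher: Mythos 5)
Your proof is correct and follows essentially the same route as the paper: the paper's (much terser) argument likewise takes a positive element $a$ with $\overline{aAa}$ projectionless, feeds it into the local tracial $\mathcal{C}$ definition, and concludes that the remainder $1-p$ vanishes so that the approximating homomorphism is unital. Your write-up just makes explicit the steps the paper leaves implicit (the inclusion $\overline{a_0Aa_0}\subset B_0$ and the cancellation of $1-p$).
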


\vskip 3mm

\begin{proof}
Suppose that $A$ does not have the Property (SP). Then there is a positive element $a \in A$ 
such that $\overline{aAa}$ has no non zero projection. 
Since $A$ is a local TA$\mathcal{C}$-algebra, 
for every finite set $\mathcal{F} \subset A$ and every $\ep > 0,$ 
we conclude that there are a unital C*-algebra $C$ in the class $\mathcal{C}$ and  
a unital *-homomorphism $\varphi \colon C \to A$ such that $\mathcal{F}$ can be approximated 
by a C*-algebra $\varphi(C)$ to whithin $\varepsilon$.
 Hence $A$ is a local $\mathcal{C}$-algebra.
\end{proof}

\vskip 3mm

\section{Tracial Rokhlin property for finite group actions}

Inspired by the concept of the tracial AF C*-algebras in \cite{Lin:tracial}
Phillips defined the tracial Rokhlin property for a finite group action in 
\cite[Lemma 1.16]{Phillips:tracial} as follows:

\vskip 1mm

\begin{dfn} Let $\alpha$ be the action of a finite group $G$ on a unital 
an infinite dimensional finite simple separable unital C*-algebra $A$. 
An $\alpha$ is said to have the {\it tracial Rokhlin property} if for every finite set 
$F \subset A$, every $\varepsilon > 0$, and every nonzero positive $x \in A$, 
there are mutually orthogonal projections $e_g \in A$ for $g \in G$ such that:
\begin{enumerate}
\item $\|\alpha_g(e_h) - e_{gh}\| < \varepsilon$ for all $g, h \in G$.
\item $\|e_ga - ae_g\| < \varepsilon$ for all $g \in G$ and all $a \in F$.
\item With $e = \sum_{g\in G}e_g$, the projection $1 - e$ is Murray-von Neumann 
equivalent to a projection in the hereditary subalgebra of $A$ generated by $x$.
\end{enumerate}
\end{dfn}

\vskip 3mm

It is obvious that the Rokhlin property is stronger than the 
tracial Rokhlin property. 
As pointed in \cite{Izumi:Rohlin1} the Rokhlin property gives rise to several 
$K$-theoretical constrains. For example there is no action 
with the Rokhlin property on the noncommutative 2-torus. 
On the contrary, if $A$ is a simple higher dimensional noncommutative 
torus, which standard unitary generators $u_1, u_2, \dots, u_n$,  then
the automorphism which sends $u_k$ to $\exp(2\pi/n)u_k$, and fixes $u_j$ 
for $j \not= k$,  generates an action $\Z/n\Z$ and has the tracial Rokhlin property, 
but for $n > 1$ never has the Rokhlin property (\cite{Phillips:tracial}).

\vskip 3mm

\begin{lem}\label{Basiclemma}(\cite{Phillips:tracial}, \cite{DA})
Let $a$ be an infinite dimensional stably finite simple C*-algebra with 
the Property SP. Let  $G$ be a finite group of order $n$
and let $\alpha\colon G \rightarrow \Aut(A)$ be an action of $G$ 
with the tracial Rokhlin property. 
Then for any $\varepsilon > 0$, every $N \in \N$, 
any finite set $\mathcal{F} \subset A \rtimes_\alpha G$, and any non-zero 
$z \in (A \rtimes_\alpha G)^+$, 
there exist a non-zero projection $e \in A \subset A \rtimes_\alpha G$, 
a unital C*-subalgebra $D \subset e(A \rtimes_\alpha G)e$, a projection $f \in A$ and 
an isomorphism $\phi \colon M_n \otimes fAf \rightarrow D$, such that 
\begin{itemize}
\item For every $a \in \mathcal{F}$ $\|ea - ae \| < n\varepsilon$.
\item With $(e_{gh})$ for $g, h \in G$ being a system of matrix units 
for $M_n$, we have $\phi(e_{11} \otimes a) = a$ for all $a \in fAf$ 
and $\phi(e_{gg} \otimes 1) \in A$ for $g \in G$.
\item With $(e_{gg})$ as in (1), we have 
$\|\phi(e_{gg} \otimes a) - \alpha_g(a)\| \leq \varepsilon \|a\|$ 
for all $a \in fAf$.
\item For every $a \in F$ there exist $b_1, b_2 \in D$ such that 
$\|ea - b_1\| < \varepsilon$, $\|ae - b_2\| < \varepsilon$ and 
$\|b_1\|, \|b_2\| \leq \|a\|$.
\item
$e = \sum_{g\in G}\phi(e_{gg} \otimes 1)$. 
\item
$1 - e$ is Murray-von Neumann equivalent to a projection in 
$\overline{z(A \rtimes_\alpha G)z}$.
\end{itemize}
\end{lem}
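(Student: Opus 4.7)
The plan is to follow the strategy of Theorem~2.2 in \cite{Phillips:tracial}: build the subalgebra $D \cong M_n \otimes fAf$ inside $A \rtimes_\alpha G$ by forming approximate matrix units $\tilde E_{gh} := u_g e_1 u_h^*$ from the tracial Rokhlin projections $(e_g)_{g \in G}$ in $A$ together with the canonical unitaries $u_g \in A \rtimes_\alpha G$, then perturbing these to exact matrix units whose diagonal entries lie in $A$.

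First reduce. Each element of $\mathcal{F} \subset A \rtimes_\alpha G$ is approximable within $\ep/3$ by a sum $\sum_{g} a_g u_g$ with $a_g \in A$, so it suffices to control the interaction of the Rokhlin projections with a finite set $\mathcal{F}_0 \subset A$ together with the unitaries $u_h$. Fix a tolerance $\dt > 0$ much smaller than $\ep/(n|G|(1+M))$, with $M = \max_{a \in \mathcal{F}}\|a\|$. Next, convert the Rokhlin smallness condition, which produces projections in $\overline{xAx}$ for $x \in A^+$, into a statement about $\overline{z(A \rtimes_\alpha G) z}$: using property (SP) of $A$ and the tracial Rokhlin hypothesis (which forces property (SP) on $A \rtimes_\alpha G$) together with a standard corner argument, produce $x \in A^+$ such that any projection in $\overline{xAx}$ is \mvnt\ in $A \rtimes_\alpha G$ to a subprojection of some projection in $\overline{z(A \rtimes_\alpha G) z}$. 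Now apply the tracial Rokhlin property to the data $(\mathcal{F}_0, \dt, x)$, obtaining mutually orthogonal projections $(e_g)_{g \in G}$ in $A$ satisfying the three usual Rokhlin conditions.

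A short computation using $u_g e_1 u_g^* = \alpha_g(e_1)$ and the equivariance $\|\alpha_g(e_h) - e_{gh}\| < \dt$ then shows that the $\tilde E_{gh}$ satisfy the matrix-unit relations up to error $O(\dt)$, and that $\|\tilde E_{gg} - e_g\| < \dt$. Since the $(e_g)$ are already exact orthogonal projections, a polar-decomposition correction applied to $V_g := e_g \tilde E_{g1} e_1$ yields honest partial isometries with $V_g^* V_g = e_1$ and $V_g V_g^* = e_g$; defining $E_{gh} := V_g V_h^*$ produces exact matrix units with $E_{gg} = e_g \in A$. Set $f := e_1 \in A$, $e := \sum_g e_g \in A$, and
$D := \mathrm{span}\{ E_{g1} a E_{1h} : g, h \in G,\ a \in fAf \} \subset e(A \rtimes_\alpha G) e$, with $\phi(e_{gh} \otimes a) := E_{g1} a E_{1h}$; then $\phi$ is a $*$-isomorphism from $M_n \otimes fAf$ onto $D$.

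Verification of the six conditions is now bookkeeping. Conditions (ii) and (v) are automatic from $E_{11} = e_1 = f$ and $E_{gg} = e_g$. Condition (iii) follows because $E_{g1} a E_{1g}$ is within $O(\dt)\|a\|$ of $u_g e_1 a e_1 u_g^* = \alpha_g(a)$. For (i) and (iv), decompose $a \in \mathcal{F}$ as $\sum_h a_h u_h$ with $a_h \in A$, use $\|[e_g, a_h]\| < \dt$ together with $u_h e_g u_h^* \approx e_{hg}$ to rewrite $ea$ and $ae$ as elements of $D$ with error bounded by $\ep$ (and norm no larger than $\|a\|$). Condition (vi) is the Rokhlin smallness combined with the choice of $x$. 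The main obstacle is the joint perturbation-and-compatibility step: arranging that the diagonals of the perturbed matrix units lie in the subalgebra $A$ (so that $e \in A$ as required), which is exactly why one fixes the diagonals to be the pre-existing Rokhlin projections and performs the polar-decomposition correction only on the partial isometries $V_g$; a general black-box perturbation of approximate matrix units would place the $E_{gg}$ merely in $A \rtimes_\alpha G$, breaking condition (ii).
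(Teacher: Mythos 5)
Your proposal is correct and follows essentially the same route as the paper, which simply defers to Lemma~3.1 of \cite{DA} (itself the matrix-unit construction from Theorem~2.2 of \cite{Phillips:tracial}): form approximate matrix units $u_ge_1u_h^*$ from the Rokhlin projections and the canonical unitaries, polar-correct the off-diagonal partial isometries while keeping the diagonals equal to the $e_g\in A$, and convert the smallness condition via the (SP)-type comparison result. Your remark that the diagonals must be fixed to the pre-existing Rokhlin projections (rather than black-box perturbation of all the approximate matrix units) is exactly the point that makes conditions (ii) and (v) work.
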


\begin{proof}
This comes from the same argument in Lemma 3.1 of \cite{DA}.
\end{proof}

\vskip 3mm

\begin{thm}\label{Th:traciality}
Let $\mathcal{C}$ be a class of infinite dimensional 
stably finite separable unital C*-algebras which 
is closed under the following conditions:
\begin{enumerate}
\item[$(1)$]
$A \in {\mathcal C}$ and $B \cong A$, then $B \in \mathcal{C}$.
\item[$(2)$]
If $A \in \mathcal{C}$ and $n \in \N$, then $M_n(A) \in \mathcal{C}$.
\item[$(3)$] 
If $A \in \mathcal{C}$ and $p \in A$ is a nonzero projection, 
then $pAp \in \mathcal{C}$.
\end{enumerate}

For any simple C*-algebra $A \in TA{\mathcal C}$
and an action $\alpha$ of a finite group $G$ if $\alpha$ has the tracial 
Rokhlin property, then the crossed product algebra $A \rtimes_\alpha G$ 
belongs to the class $TA{\mathcal C}$.
\end{thm}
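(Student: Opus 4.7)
The strategy is to use Lemma~\ref{Basiclemma} to pass from $A \rtimes_\alpha G$ to a matrix amplification $D \cong M_n(fAf)$ sitting inside a corner of the crossed product, and then to invoke the hypothesis $A \in \mathrm{TA}\mathcal{C}$ for the corner $fAf$, using that $\mathcal{C}$ is closed under corners and matrix amplification.

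Fix a finite set $\mathcal{F} \subset A \rtimes_\alpha G$ with norms at most one, $\varepsilon > 0$, and a nonzero positive $z \in (A \rtimes_\alpha G)^+$. Since $A$ is simple and $\alpha$ has the tracial Rokhlin property, the crossed product $A \rtimes_\alpha G$ is simple; and since every $C \in \mathcal{C}$ is stably finite and infinite-dimensional while $A \in \mathrm{TA}\mathcal{C}$, Proposition~\ref{P:Local C vs Tracial} leaves only two cases: either $A$ has Property (SP), or $A$ is already a local $\mathcal{C}$-algebra. The latter reduces to a simpler direct approximation, so I treat the generic case in which $A$, and hence $A \rtimes_\alpha G$, has Property (SP), and select two mutually orthogonal nonzero projections $p_1, p_2 \in \overline{z(A \rtimes_\alpha G)z}$.

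Apply Lemma~\ref{Basiclemma} with a sufficiently small tolerance $\varepsilon' \ll \varepsilon/n$, the set $\mathcal{F}$, and $p_1$ in the role of $z$. This produces a projection $e \in A$, a unital subalgebra $D \subset e(A \rtimes_\alpha G)e$ with $D \cong M_n(fAf)$, approximants $b_1(x), b_2(x) \in D$ for each $x \in \mathcal{F}$ satisfying $\|ex - b_1(x)\|, \|xe - b_2(x)\| < \varepsilon'$ and $\|ex - xe\| < n\varepsilon'$, and a Murray--von Neumann equivalence of $1 - e$ with a projection in $\overline{p_1 (A \rtimes_\alpha G) p_1}$. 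By the closure of $\mathcal{C}$ under corners and matrix amplification, a routine inheritance argument gives $fAf \in \mathrm{TA}\mathcal{C}$ and hence $D \in \mathrm{TA}\mathcal{C}$. Using Property (SP) of $D$ and comparability in the simple unital algebra $A \rtimes_\alpha G$, choose a nonzero positive $d \in D$ with $[d] \leq [p_2]$ in the Cuntz semigroup of $A \rtimes_\alpha G$, and apply the $\mathrm{TA}\mathcal{C}$ property of $D$ to the set $\{\,b_1(x), b_2(x) : x \in \mathcal{F}\,\}$, tolerance $\varepsilon'$, and the element $d$. This produces a projection $q \in D$ and a C*-subalgebra $B \subset D$ with $B \in \mathcal{C}$, $1_B = q$, such that $q$ approximately commutes with each $b_i(x)$, $q b_i(x) q \in_{\varepsilon'} B$, and $e - q$ is MvN-equivalent in $D$ to a projection in $\overline{dDd}$.

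The final step is to verify that $q$ and $B$ witness the $\mathrm{TA}\mathcal{C}$ condition in $A \rtimes_\alpha G$ for the original data. Since $q \in D \subset eAe$ one has $qe = eq = q$, so $xq = xeq \approx b_2(x)q$ and $qx = qex \approx q b_1(x)$; chaining these with the internal commutation and approximation inside $D$ and with $\|ex - xe\| < n\varepsilon'$ yields $\|xq - qx\| < \varepsilon$ and $qxq \in_\varepsilon B$ for $\varepsilon'$ chosen small enough. For the tracial condition, write $1 - q = (1 - e) + (e - q)$: the first summand is MvN-equivalent to a subprojection of $p_1$ by Lemma~\ref{Basiclemma}, and $e - q \in \overline{dDd} \subset \overline{d(A \rtimes_\alpha G)d}$ is MvN-subequivalent in $A \rtimes_\alpha G$ to a subprojection of $p_2$ by the choice of $d$; orthogonality of $p_1, p_2$ in $\overline{z(A \rtimes_\alpha G)z}$ then gives the desired equivalence. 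The main obstacle is this last transport of Cuntz subequivalence across the inclusion $D \hookrightarrow A \rtimes_\alpha G$: the defect produced by the inner $\mathrm{TA}\mathcal{C}$ application is a priori controlled only in the Cuntz semigroup of $D$, and arranging $[d] \leq [p_2]$ in the crossed product requires combining Property (SP) of $D$, repeated halving of $p_2$, and comparability in the simple unital algebra $A \rtimes_\alpha G$, which is the only place where delicate bookkeeping is unavoidable.
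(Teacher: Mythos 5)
Your overall architecture is the same as the paper's: apply Lemma~\ref{Basiclemma} to get $D\cong M_n(fAf)$ inside a corner of $A\rtimes_\alpha G$, feed the $\mathrm{TA}\mathcal{C}$ hypothesis into that matrix corner, and split the defect as $1-p=(1-e)+(e-p)$ against two orthogonal pieces of $\overline{z(A\rtimes_\alpha G)z}$. The commutator and approximation estimates you sketch are essentially those in the paper. However, there is a genuine gap at exactly the point you flag as ``delicate bookkeeping'': you need a nonzero positive element $d$ that lies \emph{in $D$} and satisfies $d\preceq p_2$ in $A\rtimes_\alpha G$. The tools you cite do not produce one. The comparison lemma for simple C*-algebras with Property (SP) hands you a nonzero projection $r\le d_0$ with $r\preceq p_2$, but $r$ lives in the hereditary subalgebra $d_0(A\rtimes_\alpha G)d_0$ of the crossed product, not in $D$; and ``repeated halving'' inside $D$ gives many orthogonal equivalent projections $r_1,\dots,r_N\in D$, but without strict comparison the inequality $N[r_1]\le k[p_2]$ in the Cuntz semigroup does not yield $[r_1]\le[p_2]$, so largeness of $N$ buys you nothing in a general simple unital C*-algebra. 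The paper closes this gap with a different tool: Theorem~2.1 of \cite{O:SP-property} produces a projection $q$ in the \emph{subalgebra} $A$ that is Murray--von Neumann equivalent in $A\rtimes_\alpha G$ to a projection in $\overline{z(A\rtimes_\alpha G)z}$; one then halves $q=q_1+q_2$ and compares entirely inside the simple SP algebra $A$ (via Lin's book) to get nonzero projections $f_1\le f$ and $f_2\le q_2$ with $f_1\sim f_2$, so that $e_{11}\otimes f_1\in M_n\otimes fAf$ is a legitimate cutdown element for the $\mathrm{TA}\mathcal{C}$ application and $\phi(e_{11}\otimes f_1)=f_1\preceq q_2$. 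Without some such device for transferring the hereditary subalgebra of the crossed product back into $A$ (or into $D$), your final tracial estimate does not go through.

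A secondary issue is your case split. Proposition~\ref{P:Local C vs Tracial} tells you that a non-SP algebra in $\mathrm{TA}\mathcal{C}$ is a local $\mathcal{C}$-algebra, but that says nothing about $A\rtimes_\alpha G$: local approximation of $A$ does not pass to the crossed product without Rokhlin-type towers, so the non-SP case does not ``reduce to a simpler direct approximation'' as stated. The correct dichotomy, used in the paper, is Lemma~1.13 of \cite{Phillips:tracial}: either $A$ has Property (SP) or $\alpha$ has the strict Rokhlin property, and in the latter case one invokes the argument of Proposition~1.7 of \cite{OP:Rohlin} for the crossed product. You should replace your dichotomy with this one.
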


\begin{proof}
Since $\alpha$ is outer by Lemma~1.5 of \cite{Phillips:tracial}, 
$A \rtimes_\alpha G$ is simple by Theorem 3.1 of \cite{Kishimoto:81}.

By Lemma~1.13 of \cite{Phillips:tracial}, $A$ has the Property (SP) or 
$\alpha$ has the strictly Rokhlin property.

If $\alpha$ has the strictly Rokhlin property, then 
$A \rtimes_\alpha G \in TA\mathcal{C}$ from the argument 
in the proof of Proposition~1.7 in \cite{OP:Rohlin}.

Let $\mathcal{F} \subset A \rtimes_\alpha G$ be a finite set with 
$\|z \| \leq 1$ and let $\varepsilon > 0$.

Next, suppose that $A$ has the property SP. Then 
there exists a non-zero projection $q \in A$ which 
is Murray-von Neumann equivalent in $A \rtimes_\alpha G$ 
to a projection in $\overline{z(A \rtimes_\alpha G)z}$ by 
Theorem~2.1 of \cite{O:SP-property}.
Since $A$ is simple, take orthogonal projections 
$q_1, q_2$ with $q_1, q_2 \leq q$ by \cite{Lin:book}.
Set $n = card(G)$, and set $\varepsilon_0 = \frac{1}{16n}\varepsilon$. 

By Lemma~\ref{Basiclemma} for $n$ as given, 
for $\varepsilon_0$ in place of $\varepsilon$, and  for 
$q_1$ in place of $z$
there exist a non-zero projection $e \in A \subset A \rtimes_\alpha G$, 
a unital C*-subalgebra $D \subset e(A \rtimes_\alpha G)e$, 
a projection $f \in A$ and 
an isomorphism $\phi \colon M_n \otimes fAf \rightarrow D$, such that 
\begin{enumerate}
\item[(0)] For every $a \in \mathcal{F}$ $\|ea - ae \| < n\varepsilon_0$.
\item[(i)] With $(e_{gh})$ for $g, h \in G$ being a system of matrix units 
for $M_n$, we have $\phi(e_{11} \otimes a) = a$ for all $a \in fAf$ 
and $\phi(e_{gg} \otimes 1) \in A$ for $g \in G$.
\item[(ii)] With $(e_{gg})$ as in (1), we have 
$\|\phi(e_{gg} \otimes a) - \alpha_g(a)\| \leq \varepsilon_0$ 
for all $a \in fAf$.
\item[(iii)]  For every $a \in F$ there exist $d_1, d_2 \in D$ such that 
$\|ea - d_1\| < \varepsilon_0$, 
$\|ae - d_2\| < \varepsilon_0$ and 
$\|d_1\|, \|d_2\| \leq 1$.
\item[(iv)]
$e = \sum_{g\in G}\phi(e_{gg} \otimes 1)$. 
\item[(v)] $1 - e$ is Murray-von Neumann equivalent to a projection in 
$\overline{q_1(A \rtimes_\alpha G)q_1}$.
\end{enumerate}

We note that there is a finite set $T$ in the closed ball of 
$M_n \otimes fAf$ such that for every $a \in F$ there are 
$b_1, b_2 \in T$ such that $\|ea - \phi(b_1)\| < \varepsilon_0$
and $\|ae - \phi(b_2)\| < \varepsilon_0$.

Since $A$ is simple, we 
choose equivalent nonzero projections 
$f_1, f_2 \in A$ such that $f_1 \preceq f$ and $f_2 \preceq q_2$ 
by \cite{Lin:book}. 
Since $M_n \otimes fAf \in TA\mathcal{C}$ by Lemma 2.3 of \cite{EN},
there is a projection $p_0 \in M_n \otimes fAf$ 
and a C*-subalgebra $C \subset M_n \otimes fAf$ such that 
$C \in \mathcal{C}$, $1_C = p_0$ such that 
$\|p_0b - bp_0\| < \frac{1}{4}\varepsilon$ for all $b \in T$,
such that for every $b \in T$ 
there exists $c \in C$ with $\|p_0bp_0 - c\| < \frac{1}{4}\varepsilon$, 
and such that 
$1 - p_0 \preceq e_{11} \otimes f_1$ in 
$M_n \otimes fAf$.

Set $p = \phi(p_0)$, and set $E = \phi(C)$, which 
is a unital subalgebra of $p(A \rtimes_\alpha G)p$ and belongs to $\mathcal{C}$. 
Note that $e - p = \phi(1 - p_0) \preceq \phi(e_{11} \otimes f_1) = f_1$.

Let $a \in \mathcal{F}$. Choose $b \in T$ such that 
$\|\phi(b) - eae\| < \varepsilon_0 < \frac{1}{4}\varepsilon$. 
Then, using $pe = ep = p$,
\begin{align*}
\|pa - ap\| &\leq 2\|ea - ae\| + \|peae - eaep\|\\
&\leq 2\|ea - ae\| + 2\|eae - \phi(b)\| + \|p_0b - bp_0\|\\
&< 2n\varepsilon_0 + 2\varepsilon_0 + \varepsilon_0\\
&= (2n + 3)\varepsilon_0 < \varepsilon.
\end{align*}

Choosing $c \in C$ such that $\|p_0bp_0 - c\| < \frac{1}{4}\varepsilon$, 
the element $\phi(c)$ is in $E$ and satisfies
\begin{align*}
\|pap - \phi(c)\| &= \|peaep - \phi(c)\|\\
&= \|p(eae - \phi(b))p + p \phi(b)p - \phi(c)\|\\
&\leq \|eae - \phi(b)\| + \|p_0bp_0 - c\| \\
&< \frac{1}{4}\varepsilon + \frac{1}{4}\varepsilon < \varepsilon.
\end{align*}
Finally, in $A \rtimes_\alpha G$ we have 
\begin{align*}
1 - p = (1 - e) + (e - p) \preceq q_1 + q_2 \leq q
\end{align*}
and $q$ is Murray-von Neumann equivalent to a projection in 
$\overline{z(A \rtimes_\alpha G)z}$. 
\end{proof}

\vskip 3mm

\begin{thm}\label{Th:stablerank}(\cite{FF})
Let $A$ be an infinite dimensional  simple separable unital C*-algebra 
with stable rank one and let
$\alpha \colon G \rightarrow \Aut(A)$ be an  
action of a finite group $G$ with the tracial Rokhlin property. 
Then $A \rtimes_\alpha G$ has stable rank one.
\end{thm}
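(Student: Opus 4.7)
The plan is to deduce this statement as a direct consequence of Theorem~\ref{Th:traciality}. I would let $\mathcal{C}$ be the class of infinite dimensional stably finite separable unital C*-algebras of stable rank one, and first verify that $\mathcal{C}$ satisfies the three closure conditions required in Theorem~\ref{Th:traciality}: closure under isomorphism is immediate, and preservation of stable rank one under the operations $B \mapsto M_n(B)$ and $B \mapsto pBp$ is a classical fact due to Rieffel. Since $A$ itself lies in $\mathcal{C}$, one has $A \in TA\mathcal{C}$ trivially by taking $p = 1_A$ and $C = A$ in Definition~\ref{D:TC}(ii), so Theorem~\ref{Th:traciality} immediately yields $A \rtimes_\alpha G \in TA\mathcal{C}$.

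The substantive remaining step is to show that a simple unital separable C*-algebra lying in $TA\mathcal{C}$ itself has stable rank one. Given $a \in A \rtimes_\alpha G$ and $\varepsilon > 0$, I would apply Definition~\ref{D:TC}(ii) to the finite set $\{a\}$ with a sufficiently small tolerance and an appropriately chosen nonzero positive element, obtaining a projection $p$ nearly commuting with $a$, a unital subalgebra $C \subset p(A \rtimes_\alpha G)p$ of stable rank one with $pap$ approximately in $C$, and a Murray-von Neumann subequivalence $1 - p \preceq q$ for a prescribed small projection $q$. Using the stable rank one of $C$, one approximates $pap$ by an invertible element of $C$, and I would then splice this with an invertible correction on the small corner $(1-p)(A \rtimes_\alpha G)(1-p)$ to assemble an invertible element of $A \rtimes_\alpha G$ within $\varepsilon$ of $a$.

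The main obstacle is this splicing: ensuring that the assembled element is genuinely invertible in the full algebra despite the small off-diagonal perturbation produced by the non-vanishing commutator $[a, p]$. This is handled by the standard technique of requiring the initial tolerance to be much smaller than $\varepsilon$ (and enlarging the finite set to include $a^*$) so that a polar-decomposition-type adjustment is available. Since the complete argument is exactly Theorem~4.3 of \cite{EN} and is also carried out in \cite{FF}, I would close the proof by invoking that known collapse $TA\mathcal{C} \subseteq \mathcal{C}$ for simple unital separable C*-algebras and the class $\mathcal{C}$ of stable rank one algebras.
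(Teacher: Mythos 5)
Your proposal is correct and follows the same route as the paper: take $\mathcal{C}$ to be the class of stable rank one algebras, check the closure conditions, apply Theorem~\ref{Th:traciality} to place $A \rtimes_\alpha G$ in TA$\mathcal{C}$, and conclude via Theorem~4.3 of \cite{EN}. Your extra sketch of the splicing argument behind that last step is a faithful outline of what \cite{EN} actually does, but the paper simply cites it as you ultimately do.
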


\begin{proof}
Let $\mathcal{C}$ be the set of unital C*-algebras with stable rank one.
Then $\mathcal{C}$ is closed under three conditions 
in Theorem~\ref{Th:traciality}. Then from Theorem~\ref{Th:traciality}
$A \rtimes_\alpha G$ belongs to the class TA$\mathcal{C}$.

Hence from Therem~4.3 of \cite{EN} $A \rtimes_\alpha G$ has 
stable rank one.
\end{proof}

\vskip 5mm

We can also show that when $\mathcal{C}$ is the class of unital separable C*-algberas with 
real rank zero and $A$ is a simple unital C*-algebra in the class TA$\mathcal{C}$, $A$ has real rank zero.

\vskip 5mm

\begin{thm}\label{Th:realrank}
Let $\mathcal{C}$ be the class of unital separable C*-algberas with 
real rank zero. Then any simple unital C*-algebra in the class TA$\mathcal{C}$ has real rank zero.
\end{thm}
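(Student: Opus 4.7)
My plan is to follow the template of Theorem~\ref{Th:stablerank} and appeal to the real rank zero analogue of Theorem~4.3 of~\cite{EN}. The first step is to verify that the class $\mathcal{C}$ of unital separable C*-algebras with real rank zero satisfies the three closure conditions of Theorem~\ref{Th:traciality}: closure under isomorphism is trivial; closure under $M_n$ follows from the classical fact that $M_n(B)$ has real rank zero whenever $B$ does; and closure under corners $pBp$ is a consequence of Brown's theorem that real rank zero passes to hereditary C*-subalgebras. The tracial-approximation framework thus applies.

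For the main argument, I would use the Brown--Pedersen characterization: $A$ has real rank zero if and only if every self-adjoint element of $A$ is the norm-limit of self-adjoint elements with finite spectrum. Given $x=x^*\in A$ with $\|x\|\le 1$ and $\ep>0$, I would apply the TA$\mathcal{C}$ definition to $\mathcal{F}=\{x\}$, tolerance $\delta\ll\ep$, and a nonzero positive $a\in A$ (chosen with some care). This would produce a nonzero projection $p\in A$, a subalgebra $C\subseteq pAp$ with $C\in\mathcal{C}$ and $1_C=p$, and $c\in C$ with $\|xp-px\|<\delta$ and $\|pxp-c\|<\delta$. Replacing $c$ by $(c+c^*)/2$ and invoking real rank zero of $C$ to approximate this symmetrized element by a finite-spectrum self-adjoint $y_1\in C$ gives $\|pxp-y_1\|<2\delta$, while the cross terms $\|(1-p)xp\|$ and $\|px(1-p)\|$ are each bounded by $\delta$.

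The hard part will be assembling a global finite-spectrum approximation to $x$: the complementary block $(1-p)x(1-p)$ is not \emph{a priori} small in norm, since the Cuntz subequivalence $1-p\lesssim q\in\overline{aAa}$ coming from the TA$\mathcal{C}$ definition does not by itself yield a norm bound. Proposition~\ref{P:Local C vs Tracial} supplies a natural dichotomy: if $A$ is a local $\mathcal{C}$-algebra, then a single unital approximation of $x$ inside an image $\ph(B)$ with $B\in\mathcal{C}$ of real rank zero already finishes the proof, by approximating a symmetric preimage of $x$ in $B$ by a finite-spectrum self-adjoint and pushing forward. Otherwise $A$ has property (SP), and the standard remedy (as in~\cite{EN,FF} for the stable rank one analogue) is to iterate the TA$\mathcal{C}$ definition on the hereditary subalgebra $(1-p)A(1-p)$, which is itself simple and in TA$\mathcal{C}$, peeling off successive mutually orthogonal finite-spectrum pieces $y_i\in C_i\subseteq p_iAp_i$ with geometrically decreasing error parameters; property (SP) together with simplicity allows the auxiliary positive element $a_i$ at each stage to be chosen Cuntz-smaller than any desired size, forcing the residual projection to shrink. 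After finitely many iterations, careful commutator bookkeeping between $x$ and the accumulated sum $\sum_{i\le n}p_i$ produces a self-adjoint $y\in A$ with finite spectrum and $\|x-y\|<\ep$. Controlling these commutator estimates across the iteration so that the cumulative error stays below $\ep$ is the central technical point of the argument.
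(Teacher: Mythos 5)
Your overall strategy matches the paper's, which in fact consists of the single sentence that the statement follows ``from the same argument in the proof of Theorem~4.3 in \cite{EN}''; so the substance of your proposal lies entirely in whether your sketch of that argument closes. Two preliminary remarks. First, the verification of the three closure conditions in your opening paragraph is not needed here: those conditions are hypotheses of Theorem~\ref{Th:traciality} and are only used for the crossed-product application (Corollary~\ref{Cor:realrank}); in the present statement you are simply \emph{given} that $A$ belongs to TA$\mathcal{C}$. Second, the local-$\mathcal{C}$ branch of your dichotomy (via Proposition~\ref{P:Local C vs Tracial}) is fine: a unital embedding of a real rank zero algebra sends invertible self-adjoints to invertible self-adjoints.

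The genuine gap is in the (SP) branch. You correctly isolate the difficulty --- $1-p\preceq q\in\overline{aAa}$ gives no norm control on $(1-p)x(1-p)$ --- but the iteration you propose does not repair it: repeated applications of the TA$\mathcal{C}$ condition inside successive corners make the residual projection small in the Cuntz order (or in trace), never zero, while the compression of $x$ to the residual corner can retain essentially full norm at every stage; no finite amount of ``peeling off'' yields a finite-spectrum or invertible approximant on a corner where $x$ is not norm-controlled. The missing idea is the \emph{choice} of the auxiliary positive element, not its size. Assume $0\in\mathrm{sp}(x)$ (otherwise $x$ is already invertible), let $f$ be a bump function supported near $0$ with $f(x)\neq 0$, use (SP) and simplicity to find orthogonal projections $e_1,e_2\in\overline{f(x)Af(x)}$ with $e_2\preceq e_1$, and note that $\|xe_1\|,\|xe_2\|$ are small, so $x$ is within a small error of $(1-e_1)x(1-e_1)$. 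Now apply the TA$\mathcal{C}$ condition in the corner $(1-e_1)A(1-e_1)$ (again simple and in TA$\mathcal{C}$) with $a=e_2$, obtaining $p$, $C$, and a residual projection $r=(1-e_1)-p$ with $r\preceq e_2\preceq e_1$. Choosing a partial isometry $w$ with $w^*w=r$ and $ww^*\leq e_1$, the self-adjoint element
\[
z=\lambda(e_1-ww^*)+\lambda w+\lambda w^*+rxr
\]
is invertible in $(e_1+r)A(e_1+r)$ for any $\lambda>0$ and lies within roughly $4\lambda$ of $rxr$ modulo terms supported under $e_1$, where $x$ is already small; adding the invertible self-adjoint approximant of $pxp$ supplied by real rank zero of $C$ produces an invertible self-adjoint element of $A$ within $\ep$ of $x$. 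This single-shot device is exactly the one used in Propositions~\ref{prp:stableinclusion} and~\ref{prp:realinclusion} of this paper and in \cite{EN}, and it must replace your iteration.
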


\begin{proof}
We can deduce it from the same argument in the proof of Theorem~4.3 in \cite{EN}.
\end{proof}

\vskip 5mm

\begin{cor}\label{Cor:realrank}(\cite{FF})
Let $A$ be an infinite dimensional  simple separable unital C*-algebra 
with real rank zero and let
$\alpha \colon G \rightarrow \Aut(A)$ be an  
action of a finite group $G$ with the tracial Rokhlin property. 
Then $A \rtimes_\alpha G$ has real rank zero.
\end{cor}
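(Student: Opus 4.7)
The strategy is to imitate Theorem~\ref{Th:stablerank} line by line, replacing ``stable rank one'' by ``real rank zero'' throughout and using Theorem~\ref{Th:realrank} in place of Theorem~4.3 of \cite{EN} at the final step. Concretely, I would let $\mathcal{C}$ denote the class of unital separable C*-algebras of real rank zero and carry out three steps.

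First I would check that $\mathcal{C}$ is closed under the three operations required by Theorem~\ref{Th:traciality}: isomorphism, passage to matrix amplifications $M_n(\,\cdot\,)$, and passage to corners $p(\,\cdot\,)p$. The isomorphism invariance is trivial; the fact that real rank zero is preserved by $M_n(\,\cdot\,)$ and by hereditary subalgebras (in particular corners) is the classical permanence result of Brown--Pedersen and requires no further argument here.

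Second, since $A$ itself has real rank zero, $A$ belongs trivially to the class $\mathrm{TA}\mathcal{C}$ (take $p = 1_A$ and $C = A$ in Definition~\ref{D:TC}(ii), so that $1-p = 0$). Together with the hypothesis that $\alpha$ has the tracial Rokhlin property, Theorem~\ref{Th:traciality} then immediately yields $A \rtimes_\alpha G \in \mathrm{TA}\mathcal{C}$. Furthermore, since $\alpha$ has the tracial Rokhlin property on the simple C*-algebra $A$, $\alpha$ is outer by Lemma~1.5 of \cite{Phillips:tracial}, so $A \rtimes_\alpha G$ is simple by Kishimoto's theorem (Theorem~3.1 of \cite{Kishimoto:81}).

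Third, I would simply invoke Theorem~\ref{Th:realrank}, whose hypotheses (simple unital, in $\mathrm{TA}\mathcal{C}$) are now verified, to conclude that $A \rtimes_\alpha G$ has real rank zero. I do not anticipate any real obstacle: the corollary is a purely formal consequence of the theorems already in place, with the only ``content'' being the routine verification of the closure axioms for $\mathcal{C}$, which is standard.
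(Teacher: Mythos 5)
Your proposal is correct and follows essentially the same route as the paper: take $\mathcal{C}$ to be the unital separable C*-algebras of real rank zero, verify the closure conditions, apply Theorem~\ref{Th:traciality} to place $A \rtimes_\alpha G$ in TA$\mathcal{C}$, and conclude via Theorem~\ref{Th:realrank}. The extra details you supply (Brown--Pedersen permanence, outerness and Kishimoto's simplicity theorem) are consistent with, and merely flesh out, the paper's two-line argument.
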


\begin{proof}
Let $\mathcal{C}$ be the set of unital C*-algebras with real rank zero.
Then $\mathcal{C}$ is closed under three conditions 
in Theorem~\ref{Th:traciality}. Then from Theorem~\ref{Th:traciality}
$A \rtimes_\alpha G$ belongs to the class TA$\mathcal{C}$.

Hence from Theorem~\ref{Th:realrank} $A \rtimes_\alpha G$ has 
real rank zero.
\end{proof}

\vskip 5mm

\begin{thm}\label{Th:comparison}
Let $A$ be an infinite dimensional  simple separable unital C*-algebra 
such that the order on projections over $A$ is determined by traces
and let
$\alpha \colon G \rightarrow \Aut(A)$ be an 
action of a finite group $G$ with tracial Rokhlin property. 
Then  the order on projections over $A \rtimes_\alpha G$ 
is determined by traces.
\end{thm}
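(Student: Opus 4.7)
The plan is to follow the template of Theorem~\ref{Th:stablerank} and Corollary~\ref{Cor:realrank}. Let $\mathcal{C}$ be the class of infinite-dimensional stably finite separable unital C*-algebras whose order on projections is determined by traces. Once $\mathcal{C}$ is shown to satisfy the three closure conditions of Theorem~\ref{Th:traciality}, that theorem places $A \rtimes_\alpha G$ in TA$\mathcal{C}$; note that $A \rtimes_\alpha G$ is simple because $\alpha$ is outer by Lemma~1.5 of \cite{Phillips:tracial}. A TA-permanence statement of the type proved as Theorem~4.12 of \cite{EN} then delivers the conclusion for $A \rtimes_\alpha G$ itself.

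First I would verify the three closure conditions on $\mathcal{C}$. Isomorphism invariance is immediate. For the matrix amplification $M_n(B)$, tracial states on $M_n(B)$ correspond to those on $B$ via the normalized partial trace and projections in $M_n(B)$ are compared by their $K_0$-classes, so the property transfers. For a corner $pBp$ with $p$ a nonzero projection in a simple unital $B \in \mathcal{C}$, projections in $pBp$ are projections in $B$, Murray--von Neumann equivalence in $pBp$ is inherited from $B$, and tracial states on $pBp$ are exactly the normalized restrictions of tracial states on $B$; the property therefore descends to $pBp$. Infinite dimensionality and stable finiteness are clearly preserved.

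The substantive step is the local-to-global passage. Given projections $p_0,q_0$ in a simple unital TA$\mathcal{C}$-algebra $B$ with $\tau(p_0) < \tau(q_0)$ for every $\tau \in T(B)$, simplicity of $B$ together with compactness of its trace simplex yields a uniform gap $\eta > 0$. Apply the TA$\mathcal{C}$ definition to $\{p_0,q_0\}$ with a small tolerance $\ep$ and with a dominating positive element chosen so that every projection in the hereditary subalgebra it generates has tracial mass below $\eta/3$. The approximation produces a projection $e$, a subalgebra $C \subset eBe$ with $C \in \mathcal{C}$ and $1_C = e$, and (by stability of projections under small perturbations) projections $p',q' \in C$ close to $e p_0 e$ and $e q_0 e$. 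One deduces $p' \preceq q'$ in $C$ via the membership $C \in \mathcal{C}$, and then combines this with the Murray--von Neumann absorption of $1-e$ to conclude $p_0 \preceq q_0$ in $B$.

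The main obstacle I expect is the transfer of the tracial inequality from $B$ down to $C$, since a priori only those $\tau_C \in T(C)$ arising as normalized restrictions of states in $T(B)$ inherit the strict inequality directly. To cover all of $T(C)$, one can either extend each tracial state on $C$ to a tracial state on $B$ --- exploiting that $C$ sits unitally inside the full corner $eBe$ of the simple algebra $B$, so that the restriction map $T(B) \to T(C)$ has dense range for the relevant weak-$*$ arguments --- or arrange $\ep$ and the trace of $1-e$ quantitatively so that the required comparison in $C$ reduces to the inequality on restriction traces alone. This bookkeeping is essentially what Theorem~4.12 of \cite{EN} carries out, and I would adapt that argument with only notational changes.
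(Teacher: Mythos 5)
Your proposal follows exactly the route the paper takes: define $\mathcal{C}$ as the class of algebras whose order on projections is determined by traces, check the three closure conditions of Theorem~\ref{Th:traciality} to place $A \rtimes_\alpha G$ in TA$\mathcal{C}$, and then invoke Theorem~4.12 of \cite{EN} for the TA-permanence step. The paper's proof is precisely this three-line reduction, so your additional sketch of the closure conditions and of the local-to-global argument inside \cite{EN} only fills in details the paper delegates to the cited reference.
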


\begin{proof}
Let $\mathcal{C}$ be the set of unital C*-algebras
such that the order on projections over them is determined by traces.
Then $\mathcal{C}$ is closed under three conditions 
in Theorem~\ref{Th:traciality}. Then from Theorem~\ref{Th:traciality}
$A \rtimes_\alpha G$ belongs to the class TA$\mathcal{C}$.

Hence from Therem~4.12 of \cite{EN} 
the order on projections over $A \rtimes_\alpha G$ 
is determined by traces.
\end{proof}

\vskip 3mm

\begin{dfn}\label{Df:tracialtoporogicalrank}
Let $\mathcal{T}^{(0)}$ be the calss of all finite dimensional 
C*-algebras and let $\mathcal{T}^{(k)}$ be the class of all 
C*-algebras with the form $pM_n(C(X))p$, where $X$ is a finite 
CW complex with dimension $k$ and $p \in M_n(C(X))$ is a projection.

A simple unital C*-algebra $A$ is said to have tracial 
topological rank no more than $k$ if for any set 
$\mathcal{F} \subset A$, and $\varepsilon > 0$ and any nonzero positive 
element $a \in A$, there exists a C*-subalgebra $B \subset A$ with 
$B \in \mathcal{T}^{(k)}$ and $id_B = p$ such that 
\begin{enumerate}
\item[$(1)$]
$\|xp - px\| < \varepsilon$ \\
\item[$(2)$]
$pxp \in_\varepsilon C$, and\\
\item[$(3)$]
$1 - p$ is Murray-von Neumann equivalent to a projection 
in $\overline{aAa}$.
\end{enumerate}
\end{dfn}

\vskip 3mm

\begin{thm}\label{Th:tracialtopologicalrank}(\cite{OP:tracial Rokhlin})
Let $A$ be an infinite dimensional simple unital C*-algebra with 
tracial topological rank no more than or equal to $k$, and 
$\alpha \colon G \rightarrow \Aut(A)$ be an 
action of a finite group $G$ with tracial Rokhlin property. 
Then
$A \rtimes_\alpha G$ has tracial topological rank more than or 
equal to $k$.
\end{thm}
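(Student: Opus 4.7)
The stated conclusion (``tracial topological rank more than or equal to $k$'') is very likely a typographical inversion of ``no more than or equal to $k$''; this matches both the hypothesis and the pattern of Theorem~\ref{Th:stablerank}, Corollary~\ref{Cor:realrank}, and Theorem~\ref{Th:comparison}, which all propagate an \emph{upper} structural bound from $A$ to $A \rtimes_\alpha G$. Taken literally, the asserted lower bound is vacuous for $k = 0$ (any nonzero C*-algebra has tracial topological rank in $\{0,1,2,\dots\}\cup\{\infty\}$, hence $\geq 0$ automatically) and is not produced by any machinery in this paper for $k > 0$. I therefore sketch the proof of the evidently intended upper-bound statement.

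The plan is identical in shape to the three preceding corollaries: apply Theorem~\ref{Th:traciality} with the class $\mathcal{C} = \mathcal{T}^{(k)}$. Comparing Definition~\ref{Df:tracialtoporogicalrank} with Definition~\ref{D:TC}(ii), membership in $\mathrm{TA}\mathcal{T}^{(k)}$ is \emph{exactly} the condition of having tracial topological rank at most $k$. So the task reduces to verifying that $\mathcal{T}^{(k)}$ is closed under the three operations in the hypothesis of Theorem~\ref{Th:traciality}, after which one invokes the theorem directly.

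Closure under isomorphism is immediate from the definition of $\mathcal{T}^{(k)}$. For closure under matrix amplification, one has $M_n\bigl(p M_m(C(X)) p\bigr) \cong p' M_{nm}(C(X)) p'$ for the obvious amplified projection $p'$, with the same underlying finite CW complex $X$ of dimension at most $k$, so the ambient algebra remains in $\mathcal{T}^{(k)}$. For closure under corners, if $B = p M_m(C(X)) p \in \mathcal{T}^{(k)}$ and $q \in B$ is a nonzero projection, then $q$ is itself a projection in $M_m(C(X))$ with $q \leq p$, and $qBq = q M_m(C(X)) q$ is again of the required form $qM_m(C(X))q$ with $\dim X \leq k$. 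Thus $\mathcal{T}^{(k)}$ satisfies conditions (1)--(3) of Theorem~\ref{Th:traciality}.

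With the closure conditions in hand, Theorem~\ref{Th:traciality} applied to the simple C*-algebra $A \in \mathrm{TA}\mathcal{T}^{(k)}$ and the action $\alpha$ with the tracial Rokhlin property immediately gives $A \rtimes_\alpha G \in \mathrm{TA}\mathcal{T}^{(k)}$, i.e.\ $A \rtimes_\alpha G$ has tracial topological rank no more than $k$. The only nonroutine ingredient is the corner-closure verification, which is the classical identification of $q M_m(C(X)) q$ as again a cut-down of a matrix algebra over $C(X)$; everything else is a direct citation of Theorem~\ref{Th:traciality} and the definitional matching between Definitions~\ref{Df:tracialtoporogicalrank} and~\ref{D:TC}(ii).
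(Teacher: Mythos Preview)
Your proposal is correct and follows essentially the same approach as the paper: set $\mathcal{C} = \mathcal{T}^{(k)}$, observe that it satisfies the three closure conditions of Theorem~\ref{Th:traciality}, and conclude that $A \rtimes_\alpha G \in \mathrm{TA}\mathcal{T}^{(k)}$, which by Definition~\ref{Df:tracialtoporogicalrank} is exactly tracial topological rank at most $k$. You even correctly diagnose the typo in the stated conclusion, and you spell out the closure verifications that the paper merely asserts.
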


\begin{proof}
Let $\mathcal{C}$ be the set $\mathcal{T}^{(k)}$.
Then $\mathcal{T}^{(k)}$ is closed under three conditions 
in Theorem~\ref{Th:traciality}. Then from Theorem~\ref{Th:traciality}
$A \rtimes_\alpha G$ belongs to the class TA$\mathcal{C}$.
This means that $A \rtimes_\alpha G$ has tracial topological rank no
more that or 
equal to $k$ from the Definition~\ref{Df:tracialtoporogicalrank}.
\end{proof}

\vskip 5mm


\section{Tracial Rokhlin property for a inclusion of unital C*-algebras}

Let $P \subset A$ be a inclusion of unital C*-algebras and 
$E \colon A \rightarrow P$ be a conditional expectation of 
index finite.

As in the case of the Rokhlin property in \cite{KOT} we can define the tracial Rokhlin property 
for a conditional expectation for an inclusion of unital C*-algebras.

\begin{dfn}\label{tracial Rokhlin}
Let $P \subset A$ be an inclusion of unital C*-algebras and $E\colon A \rightarrow P$ be 
a conditional expectation of index finite. 
A conditional expectation $E$ is said to have the {\it tracial Rokhlin property} 
if for any nonzero positive $z \in A^\infty$ there exists a  projection $e \in A' \cap A^\infty$ satisfying 
$$
({\Index}E)E^\infty(e) = g
$$
is a projection and $1 - g$ is Murray-von Neumann equivalent to a projection in the hereditary subalgebra 
of $A^\infty$ generated by $z$, 
and a map $A \ni x \mapsto xe$ is injective. We call $e$ a Rokhlin projection.
\end{dfn}

\vskip 3mm

As in the case of an action with the tracial Roklin property (\cite[Lemma~1.13]{Phillips:tracial}) 
if $E\colon A \rightarrow P$ is a conditional expectation of index finite type for an 
inclusion of unital C*-algebras $P \subset A$ and $E$
has the tracial Rokhlin property, 
then $A$ has Property (SP) or $E$ has the Rokhlin property, 
that is, there is Rokhlin projection 
$e \in A' \cap A^\infty$ such that $(\mathrm{Index}E)E^\infty(e) = 1$.

\vskip 1mm

\begin{lem}\label{lem:trcial and SP}
Let $P \subset A$ be an inclusion of unital C*-algebras and $E\colon A \rightarrow P$
be a conditional expectation of index finite type. 
Suppose that $E$ has the tracial Rokhlin property,
then $A$ has the Property (SP) or $E$ has the Rokhlin property.
\end{lem}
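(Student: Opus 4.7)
The plan is to mimic the strategy of Phillips for single actions (Lemma 1.13 of \cite{Phillips:tracial}), but transported to the inclusion setting where the Rokhlin projection lives in the sequence algebra $A^{\infty}$. Assume $A$ does not have Property~(SP); I will produce a Rokhlin projection, i.e.\ some $e \in A' \cap A^{\infty}$ with $(\mathrm{Index}\, E) E^{\infty}(e) = 1$. Since $A$ lacks Property~(SP), I would first fix a nonzero positive $a \in A$ such that the hereditary subalgebra $\overline{aAa} \subset A$ contains no nonzero projection. The natural test element to feed into the tracial Rokhlin property is this same $a$, viewed as a constant sequence in $A^{\infty}$.

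The key technical step is to verify that the absence of projections persists in the sequence algebra, i.e., $\overline{a A^{\infty} a}$ contains no nonzero projection. I would identify $\overline{a A^{\infty} a}$ with $\ell^{\infty}(\overline{aAa})/c_{0}(\overline{aAa})$ by a short approximation argument: any element of the former is a norm-limit of sequences whose entries lie in $aAa$, and by a diagonal/telescoping adjustment one may represent it by an $\ell^{\infty}$-sequence with every entry in $\overline{aAa}$. Given a putative nonzero projection in this quotient, lift it to a sequence $(p_k) \subset \overline{aAa}$ that is asymptotically self-adjoint and idempotent; then apply a continuous functional calculus with a function $f$ vanishing at $0$ and equal to $1$ on a neighborhood of $1$. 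For large $k$, each $f(p_k)$ is an honest projection lying in $\overline{aAa}$, hence zero by hypothesis. This forces the spectrum of $p_k$ to concentrate near $0$ for large $k$, so $(p_k)$ represents the zero element of $A^{\infty}$, contradicting the choice of a nonzero projection.

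With that in hand, apply the tracial Rokhlin property of $E$ to $z = a \in A^{\infty}$: this yields a projection $e \in A' \cap A^{\infty}$ such that $g := (\mathrm{Index}\, E) E^{\infty}(e)$ is a projection, the map $A \ni x \mapsto xe$ is injective, and $1-g$ is Murray--von Neumann equivalent to a projection in $\overline{a A^{\infty} a}$. By the previous paragraph the latter subalgebra admits no nonzero projection, so $1-g = 0$ and hence $g = 1$. Combined with the injectivity of $x \mapsto xe$ already guaranteed by \DD{tracial Rokhlin}, this is precisely the Rokhlin property for $E$ in the sense of \cite{KOT}.

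The only real obstacle is the sequence-algebra stability claim for the absence of projections; once that is in place, the implication from the tracial Rokhlin property to the Rokhlin property is immediate from the definitions. The functional calculus/spectral argument outlined above is the natural way to dispatch it, and parallels the reasoning implicit in the action case of \cite{Phillips:tracial}.
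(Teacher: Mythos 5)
Your proposal is correct and follows essentially the same route as the paper: apply the tracial Rokhlin property to a positive element whose hereditary subalgebra in $A^{\infty}$ contains no nonzero projection, and conclude $1-g=0$. The only difference is that you supply the details (via the identification with $\ell^{\infty}(\overline{aAa})/c_{0}(\overline{aAa})$ and functional calculus) of the step ``$A$ lacks Property~(SP) $\Rightarrow$ $A^{\infty}$ lacks Property~(SP),'' which the paper simply asserts.
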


\vskip 3mm

\begin{proof}
If $A$ does not have the Property (SP), then $A^\infty$ does not have Property (SP), that is, 
there is a nonzero positive element $x \in A^\infty$ which generates a hereditary subalgebra 
which contains no nonzero projection. Since $E$ has the tracial Rokhlin property, there 
exists a projection $e \in A_\infty$ such that $1 - ({\Index}E)E^\infty(e)$ is 
equivalent to some projection in $\overline{xA^\infty x}$. 
Hence $1 - ({\Index}E)E^\infty(e) = 0$. This implies that $E$ has the Rokhlin property.
\end{proof}

\vskip 3mm

\begin{rmk}\label{rmk:sp}
\begin{enumerate}
\item
When $A$ has the Property (SP), we know that 
$g$ in Definition~\ref{tracial Rokhlin} is not zero. 
Indeed, since $A^\infty$ has the Property (SP), 
from \cite[Lemma~1.15]{Phillips:tracial}
for a positive element $x \in A^\infty$ with $\|x\| = 1$ and $\varepsilon > 0$ 
there exists a nonzero projection $q \in \overline{xA^\infty x}$ such that,  
whenever $1 - g \preceq q \in \overline{xA^\infty x}$, then 
$$
\|gxg\| > 1 - \varepsilon.
$$
This implies that $g \not= 0$.
\item
We always have $g \in P' \cap P^\infty$. Indeed, 
for any $x \in P$
\begin{align*}
xg &= x({\Index}E)E^\infty(e)\\
&= ({\Index}E)E^\infty(xe)\\
&= ({\Index}E)E^\infty(ex)\\
&= ({\Index}E)E^\infty(e)x\\
&= gx.
\end{align*}
\end{enumerate}
\end{rmk}

\vskip 3mm

\begin{rmk}\label{rmk:simple}
In Definition~\ref{tracial Rokhlin} when $A$ is simple,
\begin{enumerate}
\item
we do not need the injectivity of the map $A \ni x \mapsto xe$.
\item
we have $ege = e$. Indeed, since $A$ is simple, ${\Index}E$ is scalar 
by \cite[Remark~2.3.6]{Watatani:index} and 
from \cite[Lemma~2.1.5 (2)]{Watatani:index} 
we have
$$
E(e) \geq \frac{1}{({\Index}E)^2}e,
$$
hence $g \geq \frac{1}{{\Index}E}e$. 
Then 
\begin{align*}
(1 - g)\frac{1}{{\Index E}}e(1-g) &= 0\\
(1 - g)e &= 0\\
e &= ge = ege
\end{align*}
\item
Let $E\colon A \rightarrow P$ be of index finite with the tracial Rokhlin 
property and let consider the basic extension
$$
P \subset A \subset B.
$$
Then the Rokhlin projection $e \in A' \cap A^\infty$ satisfies 
$eBe = Ae$. 

Indeed, 
let $e_p$ be the Jones projection for the inclusion 
$A \supset P$. Set $f = {\Index E}ee_pe$. Then 
\begin{align*}
f^2 &= ({\Index E})^2ee_pee_pe\\
&= ({\Index E})^2eE^\infty(e)e_pe\\
&= ({\Index E})^2eE^\infty(e)e_pe\\
&= ({\Index E})ege_pe\\
&= ({\Index E})ee_pe\\
&= f.
\end{align*}
Since 
\begin{align*}
\hat{E}^\infty(e - f) &= e - {\Index}E\hat{E}^\infty(ee_pe)\\
&= e - e = 0,
\end{align*}
we have $e = f = {\Index E}ee_pe$, 
$\hat{E}$ is the dual conditional expectation for $E$.

Then we have for any $x, y \in A$
\begin{align*}
e(xe_py)e &= xee_pey\\
&= ({\rm Index}E)^{-1}xey\\
&= ({\rm Index}E)^{-1}xye \in Ae.
\end{align*}
Hence $eBe \subset Ae$. Conversely, since $A \subset B$, $Ae \subset eBe$, and 
we conclude that $eBe = Ae$.
\hfill$\qed$
\end{enumerate}
\end{rmk}

\vskip 3mm

The following is the heredity of the Property SP for an inclusion of 
unital C*-algebras.

\vskip 3mm

\begin{prp}\label{prp:SP-property}
Let $P \subset A$ be an inclusion of unital C*-algebras with index finite type.
Suppose that $A$ is simple and $E\colon A \rightarrow P$ 
has the tracial Rokhlin property. Then we have 
\begin{enumerate}
\item $P$ is simple.
\item $A$ has the Property (SP) if and only if $P$ has the Property (SP).
\end{enumerate}
\end{prp}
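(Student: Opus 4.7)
I propose the following approach, with both parts drawing on the tracial Rokhlin dichotomy of \LL{lem:trcial and SP}: either $A$ has Property~(SP), or $E$ has the full Rokhlin property with $g=(\mathrm{Index}E)E^\infty(e)=1$. I will use freely the Rokhlin projection $e\in A'\cap A^\infty$, its compression $g\in P'\cap P^\infty$ (Remark~\ref{rmk:sp}(2)), and a Watatani quasi-basis $\{(u_i,v_i)\}$ for the finite-index expectation~$E$.

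\medskip

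For part~(1), I reduce the simplicity of $P$ to the simplicity of the basic extension $B=\langle A,e_P\rangle$ via the standard identification of $P$ as the full corner $e_PBe_P$ of $B$ (\cite{Watatani:index}). To show $B$ is simple, let $J\lhd B$ be a nonzero closed two-sided ideal. The dual conditional expectation $\hat E\colon B\to A$ is of finite index and carries a canonical Jones-type datum (the projection $e_P$ satisfies $(\mathrm{Index}\hat E)\hat E(e_P)=1$). A computation based on $B=\overline{Ae_PA}$, together with the Pimsner--Popa identity $1=\sum u_i e_P v_i$ and the faithfulness of $\hat E$, shows that $\hat E(J)$ is a nonzero two-sided ideal of $A$ contained in $J\cap A$. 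Since $A$ is simple, $\hat E(J)=A$, so $1\in J$ and $J=B$. Hence $B$, and therefore $P$, is simple.

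\medskip

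For part~(2), the direction $A$-(SP)$\,\Rightarrow\,P$-(SP) goes as follows. Take $0<b\in P$ and the hereditary subalgebra $D=\overline{bPb}$. The larger hereditary $\overline{bAb}\subset A$ contains a nonzero projection $q$ by (SP), and since $E$ is $P$-bimodular, $E(q)\in D$. The Watatani inequality $E(q)\ge(\mathrm{Index}E)^{-1}q$ (\cite[Lemma~2.1.5]{Watatani:index}) ensures that $E(q)$ has a nontrivial spectral content bounded away from $0$ on the support of $q$. Applying the tracial Rokhlin property with witness $z=q$ yields $g\in P'\cap P^\infty$ nonzero (Remark~\ref{rmk:sp}(1)); combining $g$ with a functional-calculus cutdown of $E(q)$ inside $P^\infty$ and then invoking a standard reindexation produces a nonzero projection in $D$. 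For the converse $P$-(SP)$\,\Rightarrow\,A$-(SP): if $A$ lacked~(SP), the dichotomy would force $E$ to enjoy the full Rokhlin property, and then Remark~\ref{rmk:simple}(3) would furnish the identity $eBe=Ae$ in the basic extension, producing a Morita-type transfer of (SP) from $P$ to $A$ contradicting the assumption. Hence $A$ has~(SP).

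\medskip

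The principal obstacle is the ideal-preservation step in part~(1): verifying that $\hat E(J)$ is a nonzero ideal of $A$ is the genuine content of the argument and ultimately encodes the Kishimoto-style outerness implicit in the tracial Rokhlin property; it leverages the full force of the quasi-basis expansion $1=\sum u_i e_P v_i$ inside $B$ and the faithfulness of $\hat E$. A secondary difficulty, in part~(2)($\Rightarrow$), is the passage from a spectral projection of $E(q)$ in $P^{**}$ (or in the enveloping $P^\infty$) to an honest projection in $D\subset P$, which is handled by the separable-reindexation argument standard in this circle of ideas.
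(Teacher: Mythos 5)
Your part (1) contains a genuine gap at exactly the step you identify as the ``principal obstacle.'' The assertion that $\hat{E}(J)$ is a nonzero two-sided ideal of $A$ \emph{contained in $J\cap A$} is false in general: a conditional expectation is an $A$-bimodule map, so $\overline{\hat{E}(J)}$ is indeed an ideal of $A$ (nonzero by faithfulness), but there is no reason for $\hat{E}(J)\subset J$, and without that inclusion the simplicity of $A$ only gives $\overline{\hat{E}(J)}=A$, not $1\in J$. Worse, as written your computation uses only the quasi-basis identity and faithfulness of $\hat{E}$, i.e.\ only that the inclusion is of index-finite type with $A$ simple --- the tracial Rokhlin property never actually enters. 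That cannot suffice: for $P=\C^2\subset M_2=A$ the basic construction is $M_2\oplus M_2$, which is not simple even though $A$ is. Some input from the Rokhlin projection is indispensable, and this is precisely where the paper's argument differs: it does not chase ideals at all, but uses the identity $\,x e=\hat{E}(x)e\,$ for $x\in A'\cap B$ (a consequence of $({\rm Index}\,E)\,e e_P e=e$ from Remark~\ref{rmk:simple}) together with the injectivity of $x\mapsto xe$ to conclude $(B'\cap B)e\subset(A'\cap A)e=\C e$, hence $B'\cap B=\C$; combined with Watatani's theorem that the basic construction over a simple algebra with finite index is a finite direct sum of simple ideals, trivial center forces $B$, and hence the stably isomorphic $P$, to be simple.

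Part (2) is less serious only because the paper itself delegates it to \cite{O:SP-property}, but your sketch also has soft spots. In the forward direction, after producing $q\in\overline{bAb}$ and $E(q)\in\overline{bPb}$ with $E(q)\ge({\rm Index}\,E)^{-1}q$, the passage from the positive element $E(q)$ to an honest nonzero projection in $\overline{bPb}$ is the whole content of the cited result and is not achieved by ``a functional-calculus cutdown \dots and a standard reindexation'' --- spectral projections of $E(q)$ live in $P^{**}$, not in $P$, and $g$ does not help you descend. In the converse direction, the ``Morita-type transfer'' through $eBe=Ae$ is not a proof: $eBe$ sits inside $B^\infty$, and Property~(SP) does not obviously pass from $P$ to $B^\infty$ and then to that corner. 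A cleaner route is to note that $B$ is stably isomorphic to $P$ (so inherits (SP)) and that $A\subset B$ is again of index-finite type, then apply the forward direction to that inclusion. I recommend reworking (1) along the relative-commutant lines above and either citing \cite{O:SP-property} for (2) or writing out the comparison argument it contains.
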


\vskip 3mm

\begin{proof}
$(1)$: 
Let $e$ be a Rokhlin projection for $E$ and $P \subset A \subset B$ be 
the basic extension.   Since $P$ is stably isomorphic to 
$B$, we will show that $B$ is simple, that is, $B' \cap B = \C$.

Since $e = [(e_n)] \in A' \cap A^\infty$, for any $x \in A' \cap B$ we have 
$$
ex = [(e_n)]x = [(e_nx)] = [(xe_n)] = xe.
$$

We may assume that $x = a_1e_pa_2$, where $e_p$ is the Jones projection for $E$.
Then 
\begin{align*}
xe &= exe\\
&= e(a_1e_pa_2)e\\
&= a_1ee_pea_2 \\
&= ({\rm Index}E)^{-1}a_1a_2e\\
&= \hat{E}(x)e,
\end{align*}
where $\hat{E} \colon B \rightarrow A$ be the dual conditional expectation of $E$.
Note that $\hat{E}(x) \in A'$. Hence we have $xe \in (A' \cap A)e$.

Since $A$ is simple  and $(B' \cap B)e \subset (A'\cap B)e \subset (A' \cap A)e$,
we have $(B' \cap B)e = \C e$. 
Since the map $\rho\colon A' \cap B \rightarrow (A' \cap B)e$ by $\rho(x) = xe$ is 
an isomorphism, $B' \cap B = \C$, that is, $B$ is simple, and $P$ is simple.

$(2)$: It follows from \cite{O:SP-property}.
\end{proof}

\vskip 3mm

\begin{prp}\label{prp:tracial for finite group}
Let $G$ be a finite group, $\alpha$ an action of $G$ on 
an infinite dimensional finite simple separable unital C*-algebra $A$, 
and $E$ the canonical conditional expectation from $A$ onto the fixed 
point algebra $P = A^\alpha$ 
defined by 
$$
E(x) = \frac{1}{|G|}\sum_{g\in G}\alpha_g(x) \quad \text{for} \ x \in A, 
$$
where $|G|$ is the order of $G$. 
Then $\alpha$ has the tracial Rokhlin property 
if and only if $E$ has the tracial Rokhlin property.
\end{prp}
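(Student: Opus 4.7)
\smallskip
\noindent\textbf{Proof plan.}
The plan is to pass both conditions into the central sequence algebra $A' \cap A^\infty$ and set up a correspondence between a Rokhlin projection $e$ for $E$ and a Rokhlin tower $\{e_g\}_{g \in G}$ for $\alpha$ via $e \longleftrightarrow \{\alpha_g^\infty(e)\}_{g \in G}$. Note first that $\Index E = |G|$, since $E = \frac{1}{|G|}\sum_{g \in G}\alpha_g$, so
\[
(\Index E)\, E^\infty(e) \;=\; \sum_{g \in G} \alpha_g^\infty(e)
\]
for every $e \in A^\infty$. This formula is the bridge between the two definitions.

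For the forward implication, I would invoke the central sequence reformulation of Phillips' tracial Rokhlin property, in the spirit of \cite[Lemma~1.16]{Phillips:tracial}: for any nonzero $z \in (A^\infty)^+$ there exist mutually orthogonal projections $\{e_g\}_{g \in G} \subset A' \cap A^\infty$ with $\alpha_g^\infty(e_h) = e_{gh}$ and with $1 - \sum_g e_g$ Murray--von Neumann equivalent to a projection in $\overline{zA^\infty z}$. Setting $e := e_1$ (the projection at the identity of $G$), the displayed identity gives $(\Index E)\, E^\infty(e) = \sum_g e_g$, which is a projection, and $1 - (\Index E)\, E^\infty(e)$ has the required subequivalence. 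Injectivity of $x \mapsto xe$ is automatic by simplicity of $A$, in view of Remark~\ref{rmk:simple}(1).

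For the reverse implication, assume $E$ has the tracial Rokhlin property. Given nonzero $z \in (A^\infty)^+$, pick a Rokhlin projection $e \in A' \cap A^\infty$ for $E$ relative to $z$ and set $e_g := \alpha_g^\infty(e)$ and $f := \sum_g e_g = (\Index E)\, E^\infty(e)$. By hypothesis $f$ is a projection and $1 - f$ is Murray--von Neumann equivalent to a projection in $\overline{zA^\infty z}$; equivariance $\alpha_g^\infty(e_h) = e_{gh}$ and centrality of each $e_g$ are immediate from $e \in A' \cap A^\infty$ and the fact that $\alpha_g^\infty$ is an automorphism of $A^\infty$ preserving $A$. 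The one genuinely nontrivial step is orthogonality: since $f - e_g = \sum_{h \neq g} e_h$ is positive, $e_g \leq f$ and hence $f e_g = e_g$, whence
\[
0 \;=\; e_g(f - e_g)e_g \;=\; \sum_{h \neq g} e_g e_h e_g \;=\; \sum_{h \neq g} (e_h e_g)^*(e_h e_g);
\]
positivity of each summand forces $e_h e_g = 0$ whenever $h \neq g$. Once the Rokhlin tower is produced in $A' \cap A^\infty$, I would recover Phillips' original $A$-level formulation by the usual reindexing argument using separability of $A$ and the $\varepsilon$-$F$ framework of Definition~\ref{tracial Rokhlin}.

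The main (and essentially only nontrivial) obstacle is the orthogonality argument just displayed; everything else reduces to matching definitions and to the well established passage between Phillips' tracial Rokhlin property and its central sequence reformulation.
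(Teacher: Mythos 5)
Your proposal is correct and follows essentially the same route as the paper: both directions hinge on the identity $(\Index E)E^\infty(e)=\sum_{g\in G}\alpha_g^\infty(e)$, with $e:=e_1$ in the forward direction and $e_g:=\alpha_g^\infty(e)$ in the reverse. The one place you genuinely diverge is the orthogonality step in the reverse direction: the paper writes $e_g=[(e_{g,n})]$ and perturbs the representing sequences into exactly orthogonal projections via \cite[Lemma~2.5.6]{Lin:book}, whereas you prove exact orthogonality directly in $A^\infty$ from positivity of $f-e_g=\sum_{h\neq g}e_h$ and the trace-free identity $\sum_{h\neq g}(e_he_g)^*(e_he_g)=0$; your argument is cleaner and avoids the citation, and the two are interchangeable. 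You also omit the paper's preliminary dichotomy (by Lemma~\ref{lem:trcial and SP}, either $A$ has Property~(SP) or $E$ has the strict Rokhlin property, the latter case being dispatched via \cite{KOT}); this case split is not logically necessary for your argument, since your construction works uniformly whether or not $g=1$, but it is how the paper guarantees $g\neq 0$ via Remark~\ref{rmk:sp}. The remaining steps you defer to "the usual reindexing argument" (representing $z\in A^\infty$ by nonzero positive $x_n\in A$ in the forward direction, and descending the Murray--von Neumann equivalence $1-g\sim q\in\overline{xA^\infty x}$ to a fixed index $n$ in the reverse) are exactly the points the paper spells out, so nothing is missing in substance.
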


\vskip 3mm

\begin{proof}
Suppose that $\alpha$ has the tracial Rokhlin property. 
Since $A$ is separable, there is an increasing sequence of finite sets 
$\{F_n\}_{n\in\N} \subset A$ such that $\overline{\cup_{n\in\N}F_n} = A$.
Let any nonzero positive element $x = (x_n) \in A^\infty$. 
Then we may assume that each $x_n$ is nonzero positive element.
The simplicity of $A$ implies that the map $A \ni x \mapsto xe$ is 
injective.

Since $\alpha$ has the tracial Rokhlin property, for each $n$ 
there are mutually orthogonal projections $\{e_{g,n}\}_{g \in G}$
such that 
$$
\|\alpha_h(e_{g,n}) - e_{hg,n}\| < \frac{1}{n}
$$ for all $g, h \in G$,
$$
\|[e_{g,n}, a]\| < \frac{1}{n}
$$
for all $g\in G$, and $a \in F_n$ with $\|a\| \leq 1$
$$
1 - \sum_{g\in G}e_{g,n}$$ is equivalent to a nonzero projection $q_n$ 
in $\overline{x_nAx_n}$.

Set $e_g = [(e_{g,n})] \in A^\infty$ for $g \in G$. 
Then for all $g, h \in G$
\begin{align*}
\|\alpha^\infty_h(e_g) - e_{hg}\| &= \lim\sup\|\alpha_h(e_{g,n}) - e_{hg,n}\| = 0,
\end{align*}
hence $\alpha^\infty_h(e_g) = e_{hg}$ for all $g, h \in G$.

For all $a \in \cup_{n\in\N}F_n$ with $\|a\| \leq 1$ and all $g \in G$
we have 
\begin{align*}
\|[e_g,a]\| &= \lim\sup\|[e_{g,n},a]\| = 0,
\end{align*}
hence $e_g \in A_\infty$ for all $g \in G$.

Set $q = (q_n) \in A^\infty$. Then $q$ is nonzero projection in $\overline{xA^\infty x}$
and 
\begin{align*}
1 - \sum_{g \in G}e_g &= (1 - \sum_{g\in G}e_{g,n}) \sim (q_n) = q.
\end{align*}

Therefore, if we set $e = e_1$ for the identity element $1$ in $G$, 
then $e \in A' \cap A^\infty$ and 
\begin{align*}
E^\infty(e) &= \frac{1}{|G|}\sum_{g\in G}\alpha^\infty_g(e)\\
&= \frac{1}{|G|}\sum_{g\in G}e_g
\end{align*}
and 
\begin{align*}
1 - |G|E^\infty(e) &= 1 - \sum_{g\in G}e_g \sim q \in \overline{xA^\infty x}.
\end{align*}
It follows that $E$ has the tracial Rokhlin property.

Conversely, suppose that $E$ has the tracial Rokhlin property. 
From Lemma~\ref{lem:trcial and SP} $A$ has the Propety (SP) or 
$E$ has the Rokhlin property.
If $E$ has the Rokhlin property, then $\alpha$ has the Rokhlin property by 
Proposition~3.2 in \cite{KOT}, hence $\alpha$ has the tracial Rokhlin property 
from the definition.

We may assume that $A$ has Property (SP).
Then for any finite set $F \subset A$, 
$\varepsilon > 0$, and any nonzero positive element $x \in A$  
there is a projection $e \in A_\infty$ such that $|G| E^\infty (e)  (= g)$ is a projection 
and $1 - g$ is equivalent to a projection $q \in \overline{xA^\infty x}$.
We note that $g \not= 0$ by Remark~\ref{rmk:sp}, and $e \not= 0$.
When we write $e = (e_n)$ and $q = (q_n)$, we may assume that for each $n \in \N$
$e_n$ is projection and $1 - e_n$  is equivalent to $q_n$.

Define $e_g = \alpha_g^\infty(e)\in A_\infty$ for $g \in G$, 
write $e_g = [(\alpha_g(e_n))] = [(e_{g,n})]$ for $g \in G$. 
Then since we have 
\begin{align*}
\sum_{g\in G}e_g &= \sum_{g\in G}\alpha_g(e) = |G|E^\infty(e) = g
\end{align*}
and $g$ is projection, we may assume that  that  $\{e_{g,n}\}_{g \in G}$
are mutually orthogonal projections for each $n \in \N$ by 
\cite[Lemma~2.5.6]{Lin:book}.

Then 
$\alpha_h^\infty(e_g) = e_{hg}$ for all $g, h \in G$, 
$\|[e_g, a]\| = 0$ for all $a \in F$ 
and all $g \in G$, and 
\begin{align*}
1 - \sum_{g \in G}e_g &= 1 - \sum_{g \in G}\alpha^\infty_g(e)\\
&=1 - |G| E(e) \\
&= 1 - g \sim q \in \overline{xA^\infty x},
\end{align*}

Then there exists $n \in \N$ such that 
$$
\|\alpha_h(e_{g,n}) - e_{hg, n}\| < \varepsilon
$$
for all $g, h \in G$,
$$
\|[e_{g,n}, a]\| < \varepsilon
$$
for all $a \in F$ and $g \in G$, and 
$$
1 - \sum_{g\in G}e_{g,n} \sim q_n \in \overline{xAx}.
$$

Set $f_g = e_{g,n}$ for $g \in G$, then we have 

$$
\|\alpha_h(f_g) - f_{hg}\| < \varepsilon,
$$
for all $g, h \in G$,
$$
\|[f_g, a]\| < \varepsilon
$$
for all $a \in F$ and $g \in G$, and 
$$
1 - \sum_{g\in G}f_g \sim q_n \in \overline{xAx}.
$$

Hence $\alpha$ has the tracial Rokhlin property.
\end{proof}

\vskip 3mm

The following is a key lemma to prove the main theorem in this section.

\vskip 3mm

\begin{lem}\label{lem:embedding for tracial}
Let $A \supset P$ be an inclusion of unital \ca s and 
$E$  a conditional expectation from $A$ onto $P$ with index finite type.
Suppose that $A$ is simple.
If $E$ has the tracial Rokhlin property with a Rokhlin projection $e \in A_\infty$
and a projection $g = ({\Index}E)E^\infty(e)$, then 
there is a unital linear map 
$\beta \colon A^\infty 
\rightarrow P^\infty g$ such that 
for any $x \in A^\infty$ there exists 
the unique element $y$ of $P^\infty$ such that $xe = ye = \beta(x)e$ and 
$\beta(A' \cap A^\infty) \subset P' \cap P^\infty g$. 
In particular, $\beta_{|_A}$ is a unital injective *-homomorphism and 
$\beta(x) = xg $ for all  $x \in P$.
\end{lem}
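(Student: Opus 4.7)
The approach is to define $\beta$ explicitly by the formula
\[
\beta(x) := (\mathrm{Index}\,E)\, E^\infty(xe) \qquad (x \in A^\infty)
\]
and verify its properties using the basic extension tower, relying critically on the Watatani identity $e = (\mathrm{Index}\,E)\, e e_p e$ from Remark~4.5(3). First one checks that $\beta(x) \in P^\infty g$: using $eg = ge = e$ (which follows from $e \le g$, as in the proof of Remark~4.5(2)) together with the $P^\infty$-bimodule property of $E^\infty$, one gets $\beta(x)g = (\mathrm{Index}\,E)\, E^\infty(x(eg)) = \beta(x)$. Linearity is immediate, and $\beta(1) = (\mathrm{Index}\,E)\, E^\infty(e) = g$. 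For uniqueness of $y \in P^\infty g$ with $ye = xe$: any such $y$ satisfies
\[
y = yg = y\,(\mathrm{Index}\,E)\,E^\infty(e) = (\mathrm{Index}\,E)\,E^\infty(ye) = (\mathrm{Index}\,E)\,E^\infty(xe) = \beta(x)
\]
by the left $P^\infty$-module property of $E^\infty$, forcing $y=\beta(x)$.

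The central step is the identity $\beta(x)e = xe$ for every $x \in A^\infty$. Working in the basic extension $P \subset A \subset B$ with Jones projection $e_p$, substitute $e = (\mathrm{Index}\,E)\, e e_p e$ for the rightmost $e$ in $e_p xe$ and apply the Jones relation $e_p (xe) e_p = E^\infty(xe)\, e_p$ (the immediate extension to $A^\infty$ of $e_p a e_p = E(a)e_p$ for $a\in A$, since $xe\in A^\infty$) to obtain
\[
e_p xe = (\mathrm{Index}\,E)\, e_p x e\, e_p e = (\mathrm{Index}\,E)\, E^\infty(xe)\, e_p e = \beta(x)\, e_p e.
\]
Since $\beta(x) \in P^\infty$ commutes with $e_p$, we also have $e_p \beta(x) e = \beta(x)\, e_p e$, and hence $e_p(\beta(x)e - xe) = 0$ in $B^\infty$. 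Setting $z := \beta(x)e - xe \in A^\infty$ and applying the dual conditional expectation $\hat E^\infty$, which is an $A^\infty$-bimodule map with $\hat E^\infty(e_p) = 1/(\mathrm{Index}\,E)$, yields
\[
\frac{z}{\mathrm{Index}\,E} = \hat E^\infty(e_p z) = 0,
\]
so $z=0$ as required.

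Once the key identity is established, the remaining assertions follow by uniqueness. For $x \in P$, the left $P^\infty$-module property gives $\beta(x) = x\,(\mathrm{Index}\,E)\,E^\infty(e) = xg$. For $x \in A' \cap A^\infty$ and $p \in P$, using $[p,e]=0$, both $p\beta(x)$ and $\beta(x)p$ lie in $P^\infty g$ and their products with $e$ agree with $pxe = xpe$, so uniqueness forces $p\beta(x) = \beta(x)p$ and hence $\beta(x) \in P' \cap P^\infty g$. For $x,y \in A$, multiplicativity $\beta(xy) = \beta(x)\beta(y)$ follows from $\beta(xy)e = xye = x(ye) = x\beta(y)e = \beta(x)\beta(y)e$ (using $[y,e]=0$ and the fact that $\beta(y) \in gP^\infty g$, which comes from $gae = ae$ for $a\in A$ via $ge = e$ and $[a,e]=0$), together with uniqueness; the $*$-property is analogous. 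Finally, injectivity of $\beta|_A$ is exactly the injectivity of $a \mapsto ae$ built into the definition of the tracial Rokhlin property.

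The principal technical obstacle is the chain of substitutions in the key identity: the Watatani relation $e = (\mathrm{Index}\,E)\, e e_p e$, the extended Jones formula $e_p y e_p = E^\infty(y) e_p$ for $y \in A^\infty$, and the bimodule property of $\hat E^\infty$ must all be combined correctly, with careful attention to which factors lie in $A$, $A^\infty$, $P^\infty$, or $B^\infty$, since $e$ commutes with $A$ but not with arbitrary elements of $A^\infty$ or $B^\infty$.
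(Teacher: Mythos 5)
Your proposal is correct and follows essentially the same route as the paper: the paper also defines $\beta(x)=({\Index}E)E^\infty(xe)$, uses $eg=e$ to land in $P^\infty g$, and derives the remaining properties from the identity $xe=\beta(x)e$, though it outsources that key identity (and multiplicativity) to Lemma~2.5 of \cite{OT} rather than reproving it; your basic-extension computation with $e=({\Index}E)ee_pe$, the Jones relation, and $\hat{E}^\infty$ is exactly the argument being cited there. The only cosmetic difference is that the paper first splits into the cases where $E$ has the Rokhlin property ($g=1$) or $A$ has Property~(SP), a split your direct argument does not need.
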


\vskip 3mm

\begin{proof}
Since $E$ has the tracial Rokhlin property, $A$ has the Property (SP) or 
$E$ has the Rokhlin property by Lemma~\ref{lem:trcial and SP}. 
If $E$ has the Rokhlin property, then the conclusion comes from 
Lemma~2.5 in \cite{OT} with $g = 1$. Therefore we may assume that 
$A$ has the Property (SP).

Since $A$ has the Property (SP), $g$ and $e$ are nonzero projections 
by Remark~\ref{rmk:sp}. 
As in  the same argument in the proof of Lemma~2.5 in \cite{OT}
we have for any element $x$ in $A^\infty$ there exists a unique element 
$y = ({\Index}E)E^\infty(xe) \in P^\infty$ such that $xe = ye$. 
Note that since $eg = e$ by Remark~\ref{rmk:simple}, we have
\begin{align*}
yg &=  ({\Index}E)E^\infty(xe)g \\
&= ({\Index}E)E^\infty(xeg)\\
&= ({\Index}E)E^\infty(xe) = y.
\end{align*}

Then we can define a unital map $\beta \colon A^\infty \rightarrow P^\infty g$ 
such that $xe = ye = \beta(x)e$ and $\beta(A' \cap A^\infty) \subset P' \cap P^\infty g$. 

Note that $\beta$ is injective. Indeed,  if $\beta(x) = 0$ for $x \in A$ xe = 0. 
Hence from the definition of the tracial Rokhlin property for $E$, $x = 0$.

Since for any $x \in A$
\begin{align*}
\beta(x)g &= ({\Index}E)E^\infty(xe)g\\
&= ({\Index}E)E^\infty(xeg)\\
&= ({\Index}E)E^\infty(ex) \ (= \beta(x))\\
&= ({\Index}E)E^\infty(gex)\\
&= g({\Index}E)E^\infty(xe)\\
&= g\beta(x),
\end{align*}
we know that 
$\beta_{|A}$ is a unital *-homomorphism from $A$ to $gP^\infty g$ 
from the same argument as in the proof of Lemma~2.5 in \cite{OT}.
In particular for any $x \in P$ we have 
\begin{align*}
\beta(x) &= ({\Index}E)E^\infty(xe)\\
&= x({\Index}E)E^\infty(e)\\
&= xg (= gx).
\end{align*}
\end{proof}

\vskip 3mm

\begin{prp}\label{prp:stableinclusion}
Let $P \subset A$ be an inclusion of unital C*-algebras with index finite type
and $E\colon A \rightarrow P$ has the tracial Rokhlin property. 
Suppose that $A$ is simple with $\tsr(A) = 1$. Then $\tsr(P) = 1$.
\end{prp}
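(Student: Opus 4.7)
The plan is to parallel the proof of Theorem~\ref{Th:stablerank} verbatim, replacing the crossed-product input by the corresponding inclusion input. Let $\mathcal{C}$ denote the class of unital C*-algebras of stable rank one. It is standard that $\mathcal{C}$ is closed under isomorphism, under tensoring with $M_n$, and under passage to unital corners $qBq$, so the three closure axioms appearing in the hypotheses of Theorem~\ref{Th:traciality} are satisfied. Since $A$ itself lies in $\mathcal{C}$, $A$ is trivially a simple unital local tracial $\mathcal{C}$-algebra in the sense of Definition~\ref{D:TC}.

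The next step is to reduce to the case where $A$ has Property (SP). By Lemma~\ref{lem:trcial and SP}, either $E$ has the full Rokhlin property or $A$ has Property (SP). If $E$ has the Rokhlin property, then the conclusion $\tsr(P)=1$ is already established in \cite{OT}, so I may assume that $A$ has Property (SP); by Proposition~\ref{prp:SP-property}, $P$ is then simple and also has Property (SP).

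With this reduction in place, I would invoke Theorem~4.11 to conclude that $P$ is a simple unital local tracial $\mathcal{C}$-algebra. From there, Theorem~4.3 of \cite{EN} yields $\tsr(P)=1$, exactly as in the deduction of Theorem~\ref{Th:stablerank} from Theorem~\ref{Th:traciality}. The only additional observation needed is that the (possibly non-injective) *-homomorphism $\varphi\colon C\to P$ produced by the local tracial $\mathcal{C}$-approximation has image $\varphi(C)$ still of stable rank one, since stable rank one passes to quotients; thus the Elliott--Niu argument of \cite{EN} applies with $\varphi(C)$ in place of $C$.

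The main obstacle is that Theorem~4.11 is stated under a weak semiprojectivity hypothesis on $\mathcal{C}$, which the class of stable-rank-one C*-algebras does not in general satisfy. The substantive task is therefore to check that the proof of Theorem~4.11 can be adapted for this particular $\mathcal{C}$ without that hypothesis: concretely, one builds the finite-level homomorphism $\varphi\colon C\to P$ directly from the embedding $\beta\colon A\to gP^\infty g$ of Lemma~\ref{lem:embedding for tracial}, taking $C$ to be a suitable corner $qAq$ of $A$ (which belongs to $\mathcal{C}$ by axiom (3)) and using a finite-level representative $(e_n)$ of the Rokhlin projection $e$ to transport the corner into $P$. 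The absence of a lift of $C$ itself is harmless because the definition of local tracial $\mathcal{C}$-algebra permits $\varphi$ to be non-injective, and stable rank one is quotient-stable.
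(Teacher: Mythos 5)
Your overall strategy is genuinely different from the paper's, and it has a gap at exactly the point you flag as ``the main obstacle.'' The paper does \emph{not} route this proposition through Theorem~\ref{Thm:main theorem} and the Elliott--Niu machinery. Instead it argues directly in the style of R\o rdam \cite{Ro:UHF}: reduce to approximating a two-sided zero divisor $x\in P$ by invertibles; use Property~(SP) and simplicity to produce orthogonal projections $e_1,e_2$ with $e_2\preceq e_1$ in a hereditary subalgebra orthogonal to $x$; use $\tsr(A)=1$ to find an invertible $b\in(1-e_1)A(1-e_1)$ near $x$; choose the Rokhlin projection so that $1-g\preceq e_2$, transport $b$ into $P^\infty$ via the embedding $\beta$ of Lemma~\ref{lem:embedding for tracial}, and write down an explicit invertible $\beta(b)+z\in P^\infty$ within $\varepsilon$ of $x$. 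The only object that has to descend from $P^\infty$ to a finite level is a single invertible element, and invertibility is an open condition, so this descent is automatic. No finite-level $*$-homomorphism is ever needed.

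Your proposed repair of Theorem~\ref{Thm:main theorem} for the class of stable-rank-one algebras does not close the gap. The weak semiprojectivity hypothesis is used precisely to lift the homomorphism $\beta\circ\pi\colon C\to gP^\infty g$ to a genuine $*$-homomorphism into $\prod_{n\geq k}P$; without it there is no reason such a lift exists. Your suggestion to ``use a finite-level representative $(e_n)$ of the Rokhlin projection to transport the corner into $P$'' produces at level $n$ only the map $x\mapsto({\rm Index}\,E)E(xe_n)$, which is merely \emph{approximately} multiplicative; converting an approximate homomorphism on an arbitrary stable-rank-one corner $qAq$ into an exact one is exactly the lifting problem that semiprojectivity is there to solve, and the class of stable-rank-one algebras does not satisfy it. (Your remark that non-injectivity of $\varphi$ is permitted, and that stable rank one passes to quotients, addresses a different and unproblematic point.) The reduction via Lemma~\ref{lem:trcial and SP} and Proposition~\ref{prp:SP-property}, and the appeal to \cite{OT} in the Rokhlin case, do match the paper; but the core of your argument in the Property~(SP) case is not justified as written, whereas the paper's direct argument sidesteps the lifting issue entirely.
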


\begin{proof}
Since $E$ has the tracial Rokhlin property, $E$ has the Rokhlin property or 
$A$ has the Property (SP) by Lemma~\ref{lem:trcial and SP}. 
If $E$ has the Rokhlin property, we conclude that $\tsr(P) = 1$ by \cite{KOT}.
Therefore, we assume that $A$ has the Property (SP). Then we know that 
$P$ is simple and has the Property (SP) by Proposition~\ref{prp:SP-property}.

Since $A$ is stably finite and an inclusion $P \subset A$ is of index-finite type, 
$P$ is stably finite. Hence,
using the idea in \cite{Ro:UHF} we have only to show that any two sided zero divisor 
in $P$ is approximated by invertible elements in $P$.

Let $x \in P$ be a two sided zero diviser. From \cite[Lemma~3.5]{Ro:UHF} we may assume that 
there is a positive element $y \in P$ such that $yx = 0 = xy$. 
Since $P$ has the Property (SP), there is a non-zero projection $e \in \overline{yPy}$. 
Since $P$ is simple, we can take orthogonal projections $e_1$ and $e_2$ in $P$ such that 
$e = e_1 + e_2$ and $e_2 \preceq e_1$. Note that $x \in (1 - e_1)A(1 - e_1)$. 
Since $\tsr((1 - e_1)A(1 - e_1)) = 1$, there is an invertible element $b$ in 
$(1 - e_1)A(1 - e_1)$ such that $||x - b|| < \dfrac{1}{3}\varepsilon$.

Since $E$ has the tracial Rokhlin property, there is a projection $g \in P' \cap P^\infty$ 
such that $1 - g \preceq e_2$. That is, there is a partial isometry $w \in P^\infty$ such that
$w^*w = 1 - g$ and $ww^* \leq e_1$. Moreover, $||\beta(x) - \beta(b)|| < \dfrac{1}{3}\varepsilon$ 
by Lemma~\ref{lem:embedding for tracial}. 
Note that $\beta(b)$ is invertible in $gP^\infty g$.

Set 
$$
z = \dfrac{\varepsilon}{3}(e_1 - ww^*) + \dfrac{\varepsilon}{3}w + \dfrac{\varepsilon}{3}w^* + (1 - g)x(1 - g).
$$
Then $z$ is invertible in $e_1P^\infty e_1 + (1 - g)(1 - e_1)P^\infty (1 - e_1)(1 - g)$ and \newline
$||z - (1 - g)x(1 - g)|| < \dfrac{\varepsilon}{3}$.

Then $\beta(b) + z \in P^\infty$ is invertible and 
\begin{align*}
||x - (\beta(b) + z)|| &= ||xg + x(1 - g) - \beta(b) - z||\\
&= ||\beta(x) - \beta(b) + (1 - g)x(1 - g) - z||\\
&\leq ||\beta(x) - \beta(b)|| + ||(1 - g)x(1 - g) - z|| \\
&\leq \frac{\varepsilon}{3} + \frac{\varepsilon}{3} < \varepsilon.
\end{align*}

Write $\beta(b) + z = (y_n)$ such that $y_n$ is invertible in $P$. Therefore, there is a $y_n$ such that 
$||x - y_n|| < \varepsilon$, and we conclude that $\tsr(P) = 1$.
\end{proof}

\vskip 3mm

\begin{prp}\label{prp:realinclusion}
Let $P \subset A$ be an inclusion of unital C*-algebras with index finite type
and $E\colon A \rightarrow P$ has the tracial Rokhlin property. 
Suppose that $A$ is simple with real rank zero. Then $P$ has real rank zero.
\end{prp}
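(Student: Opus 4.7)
The plan is to parallel Proposition~\ref{prp:stableinclusion}, now using the characterization that a unital $C^*$-algebra has real rank zero if and only if its invertible self-adjoint elements are dense in its self-adjoint part. First, by Lemma~\ref{lem:trcial and SP}, either $E$ has the Rokhlin property---in which case $P$ inherits real rank zero from $A$ via the argument in \cite{OT}---or $A$ has Property (SP). I focus on the Property (SP) case: Proposition~\ref{prp:SP-property} then gives simplicity and Property (SP) for $P$, and stable finiteness of $A$ descends to $P$ via the finite-index inclusion.

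Given a self-adjoint $x \in P$ with $\|x\| \le 1$ and $\varepsilon > 0$, I may assume $0 \in \sigma(x)$, since otherwise $x$ is already invertible. Functional calculus applied to a nonnegative continuous bump function supported near $0$ yields a nonzero positive $y \in P$ with $xy = yx = 0$. Property (SP) then produces a nonzero projection $e \in \overline{yPy}$, which by simplicity I split as $e = e_1 + e_2$ with $e_2 \preceq e_1$. Since $xe = ex = 0$, $x$ lies in $(1 - e_1)P(1 - e_1) \subset (1 - e_1)A(1 - e_1)$; real rank zero is hereditary, so there exists a self-adjoint $b \in (1 - e_1)A(1 - e_1)$, invertible in that corner, with $\|x - b\| < \varepsilon/3$. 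The tracial Rokhlin property then furnishes $g \in P' \cap P^\infty$ with $1 - g \preceq e_2$ and a partial isometry $w \in P^\infty$ realizing this subequivalence, with $w^*w = 1 - g$ and $ww^* \le e_1$. Since $\beta|_A$ from Lemma~\ref{lem:embedding for tracial} is a unital $*$-homomorphism, $\beta(b)$ is self-adjoint and invertible in $gP^\infty g$ with $\|\beta(x) - \beta(b)\| < \varepsilon/3$.

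Mirroring the construction in Proposition~\ref{prp:stableinclusion}, I will define the self-adjoint element
\[
z = \tfrac{\varepsilon}{3}(e_1 - ww^*) + \tfrac{\varepsilon}{3}(w + w^*) + (1 - g)x(1 - g),
\]
which should be invertible in $e_1 P^\infty e_1 + (1 - g)(1 - e_1)P^\infty(1 - e_1)(1 - g)$. Then $\beta(b) + z \in P^\infty$ will be self-adjoint and invertible, with $\|x - (\beta(b) + z)\| < \varepsilon$. Writing $\beta(b) + z = (y_n)$ with $y_n \in P$, replacing each $y_n$ by its self-adjoint part, and using openness of the invertible self-adjoint elements, some $y_n$ will be self-adjoint and invertible with $\|x - y_n\| < \varepsilon$, establishing real rank zero for $P$. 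The main obstacle, exactly as in Proposition~\ref{prp:stableinclusion}, will be verifying the invertibility of $\beta(b) + z$ in $P^\infty$: this hinges on the block decomposition induced by $e_1$ and $g$, using that $g$ commutes with $e_1$ and that $(1 - g)x(1 - g)$ is orthogonal to $e_1$ (since $e_1 \le e$ and $xe = 0$), so that $\beta(b)$ and $z$ occupy complementary corners of $P^\infty$.
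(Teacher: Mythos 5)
There is a genuine gap at the very first step of your Property~(SP) case. You claim that, given a self-adjoint $x\in P$ with $0\in\sigma(x)$, ``functional calculus applied to a nonnegative continuous bump function supported near $0$ yields a nonzero positive $y\in P$ with $xy=yx=0$.'' This is false unless $0$ is an \emph{isolated} point of $\sigma(x)$: if $y=h(x)$ for a continuous $h$, then $xy=0$ forces $t\,h(t)=0$ on $\sigma(x)$, so $h$ must vanish on $\sigma(x)\setminus\{0\}$, and by continuity $h(x)=0$ whenever $0$ is an accumulation point of the spectrum. But the case where $0$ is isolated is the trivial one (there $x$ is already approximated by invertible self-adjoint elements); the whole difficulty lies in the non-isolated case. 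In Proposition~\ref{prp:stableinclusion} the exact relation $xy=yx=0$ is available because R\o rdam's lemma reduces the stable rank one problem to two-sided zero divisors; there is no analogous reduction for real rank zero, which is why the argument cannot be copied verbatim. Everything downstream that uses exact orthogonality --- ``$x$ lies in $(1-e_1)P(1-e_1)$'' and, at the end, ``$(1-g)x(1-g)$ is orthogonal to $e_1$ since $xe=0$'' --- therefore does not hold as stated.

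The paper's proof repairs exactly this point: it takes $f$ to be a bump function equal to $1$ near $0$ and supported in a small interval, so that $f(x)\neq 0$ (else $x$ is invertible) and $\|xy\|$ is merely \emph{small} (of order $\varepsilon$) for contractions $y\in\overline{f(x)Pf(x)}$; it then finds the nonzero projection $e=e_1+e_2\in\overline{f(x)Pf(x)}$ with $e_2\preceq e_1$ via Property~(SP) and simplicity, and \emph{replaces} $x$ by $x'=(1-e_1)x(1-e_1)$, at the cost of an error $\|x-x'\|<\varepsilon/4$ controlled by $\|e_1x\|$. The element $x'$ is genuinely orthogonal to $e_1$, and only then does the Proposition~\ref{prp:stableinclusion}-style construction of $z$ (using $w$ with $w^*w=1-g$, $ww^*\le e_1$, and $\beta(b)$ for an invertible self-adjoint $b\in(1-e_1)A(1-e_1)$ close to $x'$) go through. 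Your proposal is missing this perturbation step; inserting it, and rewriting the remainder with $x'$ in place of $x$, would close the gap. The rest of your outline (the Rokhlin/SP dichotomy, heredity of real rank zero to the corner of $A$, self-adjointness and invertibility of $\beta(b)+z$, and passing from $P^\infty$ back to $P$) matches the paper's intended argument.
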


\begin{proof}
Let $x \in P$ be a self-adjoint element and $\varepsilon > 0$.
Consider a continuous real valued function $f$ is defined by $f(x) = 1$ for $|x| \leq \dfrac{\varepsilon}{12}$, 
$f(t) = 0$ if $|x| \geq \dfrac{\varepsilon}{64}$, and $f(t)$ is linear if $\dfrac{\varepsilon}{12} \leq |x| \leq \dfrac{\varepsilon}{6}$.
We may assume that $f(x) \not= 0$. Note that $\|yx\| < \dfrac{\varepsilon}{6}$ for any $y \in \overline{f(x)Pf(x)}$.

Since $A$ is simple and has the Property (SP), $P$ has the Property (SP), that is,   
there is a non-zero projection $e \in \overline{f(x)Pf(x)}$. Moreover, there are orthogonal projections $e_1$ and $e_2$ such that 
$e = e_1 + e_2$ such that $e_2 \preceq e_1$. Then
\begin{align*}
|| x - (1 - e_1)x(1 - e_1)|| &= ||e_1xe_1 + e_1x(1 - e_1) + (1 - e_1)xe_1)|| \\
&< \dfrac{3\varepsilon}{12} = \dfrac{\varepsilon}{4}
\end{align*}

As in the same step in the argument in Proposition~\ref{prp:stableinclusion} we have there is an invertible self-adjoint element $z \in P$ 
such that $|| (1 - e_1)x(1 - e_1) - z|| < \dfrac{2\varepsilon}{3}$. 
Hence, we have $|| x - z|| < \varepsilon$, and we conclude that $P$ has real rank zero.
\end{proof}

The following lemma is important to prove that heredity of the local tracial $\mathcal{C}$-property for 
an inclusion of unital C*-algerbas.

\vskip 3mm

\begin{lem}\label{lem:MNequivalent}
Let $P \subset A$ be an inclusion of unital C*-algebras with index finite type
and $E\colon A \rightarrow P$ has the tracial Rokhlin property.
Suppose that projections $p, q \in P^\infty$ satisfy $ep = pe$ and $q \preceq ep$ in $A^\infty$, where 
$e$ is the Rokhlin projection for $E$.
Then $q \preceq p$ in $P^\infty$.
\end{lem}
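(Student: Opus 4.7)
The plan is to apply the linear embedding $\beta \colon A^\infty \to P^\infty g$ of Lemma~\ref{lem:embedding for tracial} to the witness of $q \preceq ep$ in $A^\infty$ and show that its image provides a witness for $q \preceq p$ in $P^\infty$. First, by Lemma~\ref{lem:trcial and SP}, I may assume $A$ has Property~(SP); in the Rokhlin case ($g=1$), $\beta$ is a unital $*$-homomorphism and the conclusion is immediate, as in \cite{OT}.

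Assuming Property~(SP), pick $v \in A^\infty$ with $v^*v = q$ and $vv^* \leq ep$. The next step is to extract structural relations. From $vv^* \leq ep = pe$ and $v = vv^*v$, one deduces $ev = pv = v$ and, taking adjoints, $v^*e = v^*p = v^*$. The identity $ege = e$ of Remark~\ref{rmk:simple}(2) gives $e(1-g)e = 0$, hence $(1-g)e = 0$, so $eg = ge = e$; consequently $gv = v$, $v^*g = v^*$, and $vv^* \leq e \leq g$. One also checks that $e$ commutes with $vv^*$ (from $ev = v$ and $v^*e = v^*$), even though $e$ need not commute with $q = v^*v$.

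Set $w := \beta(v) = ({\Index}E)\, E^\infty(ve)$. Then $w \in gP^\infty g$ (using $gv = v$ and the $P^\infty$-bimodule property of $E^\infty$), $we = ve$, and taking adjoints $ew^* = ev^*$. The central calculation is to show that $w$ is a partial isometry in $P^\infty$ with $w^*w = qg$ and $ww^* \leq pg$: using the bimodule property of $E^\infty$, the formula $\beta(x) = ({\Index}E)\, E^\infty(xe)$, the identity $\beta(ep) = pg$, and the uniqueness clause of Lemma~\ref{lem:embedding for tracial} inside $gP^\infty g$, one expects to reduce $w^*w = ({\Index}E)\, E^\infty(eqe)$ to $qg$ and $ww^* = ({\Index}E)\, E^\infty(vev^*)$ to a subprojection of $pg$. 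Once this is in hand, the tracial-Rokhlin smallness of $1-g$ (from Definition~\ref{tracial Rokhlin}) lets one absorb the residual $q(1-g)$ and conclude $q \preceq p$ in $P^\infty$.

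The hard part will be the identity $w^*w = qg$: since Lemma~\ref{lem:embedding for tracial} only gives multiplicativity of $\beta$ on $A$, one cannot simply write $\beta(v^*v) = \beta(v)^*\beta(v)$, and the would-be commutator $[e, q]$ appearing in $E^\infty(eqe)$ is not automatically zero. One will have to exploit the asymmetry noted above --- $vv^*$ does commute with $e$ while $v^*v$ need not --- for instance by running an analogous argument with $v^*$ in place of $v$ to transfer the ``good'' side through $\beta$, or by replacing $q$ with a representative in a subalgebra on which $e$ acts by identity.
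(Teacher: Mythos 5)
There is a genuine gap here: your writeup is a plan whose central step you yourself flag as unresolved, and the specific map you chose makes that step fail. Working with $w=\beta(s)=(\Index E)\,E^\infty(se)$ (I write $s$ for your witness with $s^*s=q$, $ss^*\leq ep$), the Jones-projection computation gives $w^*w=(\Index E)\,E^\infty(ese\,s^*\!\cdots)$-type expressions that collapse to $(\Index E)\,E^\infty(eqe)$, and since $e\in A'\cap A^\infty$ need not commute with $q\in P^\infty$, this is neither $q$ nor $qg$. Worse, even if you could prove $w^*w=qg$ and $ww^*\leq pg$, your plan to ``absorb the residual $q(1-g)$'' cannot work: the lemma's conclusion $q\preceq p$ is exact, $q$ need not commute with $g$ (both live in $P^\infty$, and $g$ is only central relative to $P$), and the smallness of $1-g$ guaranteed by Definition~\ref{tracial Rokhlin} is relative to a positive element chosen \emph{before} $e$ was fixed, so it gives you no leverage against the particular $p$ in hand. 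The hypothesis $q\preceq ep$ already contains all the compression by $e$ that is needed; there should be no residual at all.

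The fix is to drop the extra $e$ and use a different normalization: set $v=(\Index E)^{1/2}E^\infty(s)\in P^\infty$. From $ss^*\leq ep$ one gets $es=s=ps$ (as you observed), hence $E^\infty(s)=E^\infty(es)$ and $E^\infty(s^*)=E^\infty(s^*e)$. Then, with $e_P$ the Jones projection and the pull-down identity $E^\infty(x)e_P=e_Pxe_P$, one computes $v^*ve_P=(\Index E)\,e_P s^*\bigl(e e_P e\bigr)s e_P=e_Ps^*ese_P=e_Pqe_P=qe_P$, using the identity $(\Index E)\,ee_Pe=e$ from Remark~\ref{rmk:simple}(3) and then $es=s$ once more; applying the dual conditional expectation $\hat E^\infty$ yields $v^*v=q$ exactly. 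Since $ps=s$ and $p\in P^\infty$ passes through $E^\infty$, we get $pv=v$, so $vv^*\leq p$, and $q\preceq p$ in $P^\infty$ with no case distinction (SP versus Rokhlin) and no error term. This is the route the paper takes; your structural observations ($es=ps=s$, the failure of $\beta$ to be multiplicative on $A^\infty$, the asymmetry between $v^*v$ and $vv^*$) are all correct and relevant, but the key identity $(\Index E)\,ee_Pe=e$ and the $(\Index E)^{1/2}E^\infty(s)$ normalization are what actually close the argument, and they are absent from your proposal.
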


\vskip 3mm

\begin{proof}

Let $s$ be a partial isometry in $A^\infty$ such that $s^*s = q$ and 
$ss^* \leq ep$.

Set $v = ({\rm Index} E)^{1/2}E^\infty(s)$. Then 
\begin{align*}
v^*ve_p &= {\rm Index}E E^\infty(s)^*E^\infty(s)e_p\\
&= {\rm Index}E E^\infty(s^*e)E^\infty(es)e_p\\
&= {\rm Index}E e_ps^*ee_pese_p\\
&= e_ps^*ese_p \quad ({\rm Index} E ee_pe = e)\\
&= E^\infty(s^*es)e_p \ \\
&= E^\infty(s^*s)e_p\\
&= E^\infty(q)e_p\\
&= qe_p.
\end{align*}

Hence
\begin{align*}
\hat{E}^\infty(v^*ve_p) &= \hat{E}^\infty(qe_p)\\
({\rm Index}E)^{-1} v^*v &= ({\rm Index} E)^{-1}q\\
v^*v &= q.
\end{align*}

Since 
\begin{align*}
pv &= p ({\rm Index} E)^{1/2} E^\infty(s)\\
&= ({\rm Index} E)^{1/2} E^\infty(ps)\\
&= ({\rm Index} E)^{1/2} E^\infty(s)\\
&= v,
\end{align*}
we have $q \preceq p$ in $P^\infty$.
\end{proof}

\vskip 3mm

\begin{thm}\label{Thm:main theorem}
Let $\mathcal{C}$ be a class of weakly semiprojective C*-algebras 
satisfying  conditions in Theorem~\ref{Th:traciality}. 
Let $A \supset P$ be an inclusion of unital \ca s and 
$E$  a conditional expectation from $A$ onto $P$ with index finite type.
Suppose that $A$ is simple, local tracial $\mathcal{C}$-algebra and
$E$ has the tracial Rokhlin property. Then $P$ is a local tracial $\mathcal{C}$-algebra.
\end{thm}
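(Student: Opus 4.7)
\emph{Proof sketch.} The plan is to reduce to the Property~(SP) case, then transfer a local $\mathcal{C}$-approximation from $A$ into $P$ using the map $\beta$ of Lemma~\ref{lem:embedding for tracial} together with the weak semiprojectivity of members of $\mathcal{C}$. By Lemma~\ref{lem:trcial and SP}, either $E$ has the (honest) Rokhlin property---in which case $g=1$, the restriction $\beta|_A \colon A \hookrightarrow P^\infty$ is a unital embedding, and the statement reduces to the arguments of \cite{OT, OT2}---or $A$ has Property~(SP), in which case by Proposition~\ref{prp:SP-property} $P$ is simple and also has Property~(SP). We concentrate on the latter case.

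Fix a finite set $\mathcal{F} \subset P$, $\varepsilon > 0$, and a nonzero $a \in P^+$. Using Property~(SP) and simplicity of $P$, carve out mutually orthogonal nonzero projections $q_1, q_2 \in \overline{aPa}$. Apply the tracial Rokhlin property of $E$ to a suitable positive element (morally $eq_1$, obtained after a preliminary approximation) to produce a Rokhlin projection $e \in A' \cap A^\infty$ and $g = ({\Index}E)E^\infty(e) \in P' \cap P^\infty$ such that $1 - g \preceq eq_1$ in $A^\infty$; note $eq_1 = q_1 e$ is a projection since $q_1 \in P$ commutes with $e$. Next, feed $\mathcal{F} \subset A$, a tolerance $\delta \ll \varepsilon$, and the positive element $q_2 \in A^+$ into the local tracial $\mathcal{C}$-property of $A$, producing $C \in \mathcal{C}$ and a $*$-homomorphism $\varphi \colon C \to A$ with $p_0 = \varphi(1_C)$ satisfying the three conditions of Definition~\ref{D:TC}; in particular $1 - p_0 \preceq q_2'$ in $A$ for some projection $q_2' \in \overline{q_2 A q_2}$.

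The critical transfer step is that $\tilde{\varphi} := \beta \circ \varphi \colon C \to g P^\infty g$ is a unital $*$-homomorphism, since $\beta|_A$ is by Lemma~\ref{lem:embedding for tracial}. By weak semiprojectivity of $C$ and separability of $P$, write $\tilde{\varphi}(c) = [(\psi_n(c))]$ and perturb the coordinate maps $\psi_n$ for sufficiently large $n$ to obtain a genuine $*$-homomorphism $\psi \colon C \to P$ that is coordinate-wise close to $\tilde{\varphi}$ on a prescribed finite generating set of $C$. Set $p = \psi(1_C) \in P$. Conditions~(1) and (2) of Definition~\ref{D:TC} for $P$ at $(\mathcal{F}, \varepsilon, a)$ follow from the corresponding estimates for $p_0$ and $\varphi(C)$ in $A$, combined with $\beta|_A$ being a $*$-homomorphism and $\psi$ coordinate-wise approximating $\beta \circ \varphi$.

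The main obstacle is condition~(3), that $1 - p$ is Murray--von Neumann subequivalent to a projection in $\overline{aPa}$. The plan is to decompose $1 - p \approx 1_P - \beta(p_0) = (1 - g) + \beta(1_A - p_0)$ in $P^\infty$ and bound each piece. For $1 - g$, Lemma~\ref{lem:MNequivalent} with $q = 1 - g$ and $p = q_1$ applies (using $1 - g \preceq eq_1$ in $A^\infty$) and yields $1 - g \preceq q_1$ in $P^\infty$. For the second piece, $\beta$ transports $1 - p_0 \preceq q_2'$ from $A$ to $\beta(1_A - p_0) \preceq \beta(q_2')$ in $g P^\infty g$; after arranging $q_2' \preceq eq_2$ in $A^\infty$ via simplicity and Cuntz comparison inside $\overline{q_2 A q_2}$, a second application of Lemma~\ref{lem:MNequivalent} gives $\beta(1_A - p_0) \preceq q_2$ in $P^\infty$. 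Adding the two subequivalences, $1 - p \preceq q_1 + q_2 \in \overline{aPa}$ in $P^\infty$, which descends to an honest subequivalence in $P$ by a standard sequence argument. Coordinating $q_1$, $q_2$, and $q_2'$ so that every application of Lemma~\ref{lem:MNequivalent} matches its hypothesis is the most delicate part of the proof.
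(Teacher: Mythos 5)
Your proposal follows essentially the same route as the paper: the same case split via Lemma~\ref{lem:trcial and SP}, the same use of two orthogonal nonzero projections in $\overline{aPa}$ (one absorbing $1-g$ via Lemma~\ref{lem:MNequivalent}, the other absorbing the defect $1-p_0$ of the local tracial $\mathcal{C}$-approximation in $A$), and the same transfer through $\beta$ combined with weak semiprojectivity. The only deviation is your proposed second application of Lemma~\ref{lem:MNequivalent} to handle $\beta(1-p_0)$: this is superfluous (and its hypothesis $\beta(1-p_0)\preceq eq_2$ in $A^\infty$ is not actually verified) --- since $\beta|_A$ is a $*$-homomorphism, applying it directly to the Murray--von Neumann subequivalence $1-p_0\preceq q_2'$ in $A$ already places the image inside $q_2 P^\infty q_2\subset\overline{aP^\infty a}$, which is exactly how the paper argues.
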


\begin{proof}
We shall prove that for every finite set $F \subset P$, every 
$\varepsilon > 0$, and $z \in P^+\backslash{0}$ 
there are C*-algebra $Q \in \mathcal{C}$ with 
$q = 1_Q$ and *-homomorphism $\pi\colon Q \rightarrow A$ such that 
$\|\pi(q)x - x\pi(q)\| < \varepsilon$ for all $x \in F$, 
$\pi(q)S\pi(q) \subset_\varepsilon \pi(Q)$, 
and $1 - \pi(q)$ is a equivalent to some non-zero 
projection in $\overline{zPz}$.

Since $E$ has the tracial Rokhlin property, $A$ has the Property (SP) or 
$E$ has the Rokhlin property by Lemma~\ref{lem:trcial and SP}. 

Suppose that  $E$ has the Rokhlin property. 
We have then from Lemma~2.5 in \cite{OT} 
there is a unital *-homomorphism 
$\beta \colon A \rightarrow P^\infty$ such that $\beta(x) = x$ for 
all $x \in P$. Since $A$ is a local tracial $\mathcal{C}$-algebra, there is an algebra $B \in \mathcal{C}$
with $1_B = p$ and *-homomorphism $\pi\colon B \rightarrow A$ such that 
$\|x\pi(p) - \pi(p)x\| < \varepsilon$ for all $x \in F$, 
$\pi(p)F\pi(p) \subset_\varepsilon \pi(B)$, 
and $1 - \pi(p)$ is equivalent to a non-zero projection 
$q \in \overline{zAz}$. Since $E$ has the Rokhlin property, there exists a non-zero 
projection $e \in A' \cap A^\infty$ such that $E^\infty(e) = \frac{1}{{\rm Index E}}$.

Since $B$ is weakly semiprojective, there exists $k \in \N$ 
and $\overline{\beta \circ \pi}\colon B \rightarrow \prod_{n=k}^\infty P$ such that 
$\beta \circ \pi = \pi_k \circ \overline{\beta \circ \pi}$, where 
$\pi_k((b_k, b_{k+1}, \dots)) = (0, \dots, 0, b_k, b_{k+1}, \dots)$.
For each $l \in \N$ with $l \geq k$ let $\beta_l$ a *-homomorphism from 
$B$ to $P$ so that $\overline{\beta \circ \pi}(b) = (\beta_n(b))_{n=k}^\infty$ 
for $b \in B$. Then $\beta \circ \pi(b) = (0, \dots, 0, \beta_k(b), \beta_{k+1}(b), \dots) + C_0(P)$ for $b \in B$ 
and $\beta_l$ is a *-homomorphism for $l \geq k$. 

Since $ 1 - p \sim q \in \overline{zAz}$, 

\begin{align*}
1 - \beta \circ \pi(p) &= 1 - \beta(1 - \pi(p))\\
&= \beta(1 - \pi(p)) \sim \beta \circ \pi(q) \in \beta(\overline{zAz})\\
&[(1 - \beta_k(\pi(p)))] \sim [(q_k)] \in \overline{zP^\infty z},\\
\end{align*}

where each $q_k$ are projections in $P$.
Taking the sufficient large $k$  since $\lim_k\|\beta_k(x) - x\| = 0$ for $x \in P$,
we have  
\begin{enumerate}
\item
$\|x\beta_k(p) - \beta_k(p)x\| < 2\ep$ for any $x \in F$,
\item
$\beta_k(p)F\beta_k(p) \subset_\varepsilon \beta_k(p)\beta_k(B)\beta_k(p)$, and 
\item
$1 - \beta_k(p) = \beta_k(1 - p) \sim q_k \in \overline{zPz}.$
\end{enumerate}
Hence $P$ is a local tracially $\mathcal{C}$-algebra.

Suppose that $A$ has the Property (SP).  
Since $A$ is simple, 
from Proposition~\ref{prp:SP-property} $P$ has also the Property (SP).  
Let $F \subset P$ be a finite set, $\varepsilon > 0$, and $z \in P^+\backslash{0}$.
Since $P$ is simple and has the Property (SP), 
there is orthogonal non-zero projections $r_1, r_2 \in \overline{zPz}$.

Since $A$ is a local tracial $\mathcal{C}$-algebra, there is an algebra $B \in \mathcal{C}$
with $1_B = p$ and *-homomorphism $\pi\colon B \rightarrow A$ 
such that $\|x\pi(p) - \pi(p)x\| < \varepsilon$ for all $x \in F$, 
$\pi(p)F\pi(p) \subset_\varepsilon B$, and $1 - \pi(p)$ 
is equivalent to a non-zero projection $q \in \overline{r_1Ar_1}$. 
Since $E$ has the tracial Rokhlin property, there exist the Rokhlin propjection 
$e' \in A' \cap A^\infty$. 
Take another Rokhlin projection  $e \in A' \cap A^\infty$ for a projection $e'r_2$
such that $g = {\rm Index}E E(e)$ satifies $1 - g$ is equivalent to a projection 
$\overline{e'r_2A^\infty e'r_2}$, that is, $1 - g \preceq e'r_2$ in $A^\infty$.
By Lemma~\ref{lem:MNequivalent} we know, then, that 
$1 - g \preceq r_2$ in $P^\infty$, that is, there is a projection $s \leq r_2 \in P^\infty$ 
such that $1 - g \sim s$. 

Write $g = [(g_n)]$ for some projections $\{g_k\}_{k\in\N} \subset P$. 
From Lemma~\ref{lem:embedding for tracial} there exists injective *-homomorphism 
$\beta\colon A \rightarrow gP^\infty g$ such that $\beta(x) = xg$ for all $x \in P$.
Since $B$ is weakly semiprojective, there exists $k \in \N$ 
and $\overline{\beta \circ \pi}\colon B \rightarrow \prod_{n=k}^\infty P$
such that 
$\beta \circ \pi = \pi_k \circ \overline{\beta \circ \pi}$, where 
$$
\pi_k((b_k, b_k+1, \dots)) = (0, \dots, 0, b_k, b_{k+1}, \dots) + C_0(P).
$$
For each $l \in \N$ with $l \geq k$ let $\beta_l$ be a map from 
$B$ to $g_lPg_l$ so that $\overline{\beta \circ \pi}(b) = (\beta_l(b))_{l=k}^\infty$ 
for $b \in B$. 
Then $\beta \circ \pi(b) = (0, \dots, 0,\beta_k(b), \beta_{k+1}(b), \dots) 
+ C_0(P)$ for all $b \in B$ and $\beta_l$ is a *-homomorphism for $l \geq k$. 

Since $ 1 - \pi(p) \sim q \in \overline{r_1Ar_1}$, 
\begin{align*}
1 - (\beta \circ \pi)(p) &= 1 - g + g - \beta(\pi(p))\\
&= 1 - g + \beta(1 - \pi(p))\\
&\sim s + \beta(q) \in r_2P^\infty r_2 + \iota\circ \beta(\overline{r_1Ar_1})\\
&\subset r_2P^\infty r_2 + r_1P^\infty r_1 \subset \overline{zP^\infty z},\\
\end{align*}
we have $[(1 - \beta_k(p)))] \sim [(q_k)] \in \overline{zP^\infty z}$,\\
where each $q_k$ is projection in $P$.
Taking the sufficient large $k$  since $\lim_k\|\beta_k(x) - x\| = 0$ for $x \in P$,
we have  
\begin{enumerate}
\item
$\|x\beta_k(p) - \beta_k(p)x\| < 2\ep$ for any $x \in F$,
\item
$\beta_k(p)F\beta_k(p) \subset_\varepsilon \beta_k(p)\beta_k(B)\beta_k(p)$, and 
\item
$1 - \beta_k(p) \sim q_k \in \overline{zPz}.$
\end{enumerate}
Hence $P$ is a local tracially $\mathcal{C}$-algebra.
\end{proof}

\vskip 3mm

\begin{cor}\label{Cor:tracialtoporogicalrank}
Let $P \subset A$ be an inclusion of unital \ca s and 
$E$  a conditional expectation from $A$ onto $P$ with index finite type. 
Suppose that $A$ is an infinite dimensional simple C*-algebra with 
tracial topological rank zero (resp. less than or equal to one) and $E$ has the 
tracial Rokhlin property. Then $P$ has tracial rank zero (resp. less than or equal to one).
\end{cor}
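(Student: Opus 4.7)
The plan is a direct application of Theorem~\ref{Thm:main theorem}. I would take $\mathcal{C}=\mathcal{T}^{(0)}$, the class of finite dimensional C*-algebras, for the tracial rank zero claim, and $\mathcal{C}=\mathcal{T}^{(1)}$ for the claim on tracial rank at most one. The first step is to verify the hypotheses: the closure conditions (1)--(3) are routine in both cases, since isomorphism and $M_n$-amplification are immediate and a corner $q(pM_n(C(X))p)q$ of an algebra in $\mathcal{T}^{(1)}$ is again of the form $p'M_{n}(C(X))p'$. Weak semiprojectivity is classical for finite dimensional C*-algebras, and for $\mathcal{T}^{(1)}$ it follows from the known semiprojectivity of one-dimensional noncommutative CW complexes due to Loring and Eilers--Loring--Pedersen.

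Granted these verifications, Theorem~\ref{Thm:main theorem} gives that $P$ is a unital local tracial $\mathcal{T}^{(k)}$-algebra in the sense of Definition~\ref{D:TC}(i). Since $P$ is simple by Proposition~\ref{prp:SP-property}(1), the second step is to promote this to the TA$\mathcal{T}^{(k)}$ property of Definition~\ref{D:TC}(ii), which for simple unital $P$ coincides with Lin's tracial topological rank at most $k$ as recorded in Definition~\ref{Df:tracialtoporogicalrank}. By the remark following Definition~\ref{D:TC}, this promotion is automatic once the approximating image $\varphi(C)$ itself lies in $\mathcal{T}^{(k)}$. For the homomorphisms $\beta_k\colon B\to P$ arising in the proof of Theorem~\ref{Thm:main theorem}, the image $\beta_k(B)$ is a quotient of $B\in\mathcal{T}^{(k)}$: when $k=0$ this quotient is automatically finite dimensional, and when $k=1$ it remains of the form $p'M_m(C(Y))p'$ for $Y$ a closed subspace of the spectrum of the original algebra, so still a finite one-dimensional CW complex.

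The main obstacle is the $k=1$ case, where one must trace through the proof of Theorem~\ref{Thm:main theorem} to confirm that the homomorphic image used for the approximation actually lies in $\mathcal{T}^{(1)}$ and not merely in some larger class of type I algebras of topological dimension at most one; the spectral argument sketched above is what closes this gap.
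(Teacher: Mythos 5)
Your proposal takes essentially the same route as the paper: the paper's proof is a one-line application of Theorem~\ref{Thm:main theorem} with $\mathcal{C}=\mathcal{T}^{(k)}$, citing Loring for (weak) semiprojectivity and \cite{OP:Rohlin} for finite saturation, and then invoking Definition~\ref{Df:tracialtoporogicalrank}; you simply spell out the step the paper leaves implicit, namely that the approximating images must land back in $\mathcal{T}^{(k)}$ so that the local tracial $\mathcal{T}^{(k)}$ property upgrades to TA$\mathcal{T}^{(k)}$. One small caveat on your $k=1$ argument: a closed subspace $Y$ of a finite one-dimensional CW complex need not itself be a finite CW complex (e.g.\ a Cantor set in $[0,1]$), so $p'M_m(C(Y))p'$ is only a \emph{local} $\mathcal{T}^{(1)}$-algebra (as $Y$ is an inverse limit of one-dimensional finite CW complexes); this is easily absorbed by one further approximation, and the paper glosses over the same point.
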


\begin{proof}
Since the classes $\mathcal{T}^{(k)}$ \ $(k = 0, 1)$ are semiprojective with respect to a 
class of unital C*-algebras (\cite{Loring:lifting}) and 
finitely saturated (\cite[Examples 2.1 and 2.2, and Lemma~1.6]{OP:Rohlin}), 
the conclusion comes from Theorem~\ref{Thm:main theorem} and Definition~\ref{Df:tracialtoporogicalrank}.
\end{proof}

\section{Jiang-Su absorption}

In this section we discuss about the heredity for the Jiang-Su absorption 
for an inclusion of unital C*-algebras with the tracial Rokhlin property.

\vskip 3mm

\begin{dfn}(\cite{HO})
A unial C*-algebra $A$ is said to be tracially $\mathcal{Z}$-absorbing if 
$A \not\cong \C$ and for any finite set $F \subset A$ and non-zero positive element $a \in A$ 
and $n \in \N$ there is an order zero construction $\phi \colon M_n \rightarrow A$ such that the following hold:
\begin{enumerate}
\item  $1 - \phi(1) \preceq a$,
\item
For any normalized element $x \in M_n$ and any $y \in F$ we have $\|[\phi(x), y)]\| < \varepsilon$.
\end{enumerate}
\end{dfn}

\vskip 3mm

\begin{thm}\label{thm:HO}(\cite[Theorem~4.1]{HO})
Let $A$ be a unital, separable, simple, nuclear C*-algebra. 
If $A$ is tracially $\mathcal{Z}$-absorbing, then $A \cong A \otimes \mathcal{Z}$.
\end{thm}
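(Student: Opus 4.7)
The plan is to verify the standard Matui--Sato--Winter criterion for $\mathcal{Z}$-absorption: for a simple separable nuclear unital C*-algebra $A$, one has $A \cong A \otimes \mathcal{Z}$ if and only if, for every $n \in \N$, there exists a unital completely positive order zero map from $M_n$ into the central sequence algebra $F(A) := (A^\infty \cap A')/\mathrm{Ann}(A)$ (equivalently, a unital $*$-homomorphism from some prime dimension drop algebra $Z_{n,n+1}$ into $F(A)$, equivalently of $\mathcal{Z}$ itself). The task is to manufacture such a unital order zero map from the tracially approximate order zero maps provided by the definition of tracial $\mathcal{Z}$-absorption.

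First, I fix $n \in \N$. Applying tracial $\mathcal{Z}$-absorption with an exhaustive increasing sequence of finite sets $\mathcal{F}_k \subset A$, tolerances $\ep_k \to 0$, and positive witnesses $a_k \in A^+$ chosen so that $d_\tau(a_k) \to 0$ uniformly over the tracial state space of $A$ (possible because $A$ is simple, so functional-calculus cut-downs of a strictly positive contraction give positive elements of arbitrarily small uniform Cuntz rank), one produces order zero contractions $\phi_k \colon M_n \to A$ with $\|[\phi_k(x),y]\| < \ep_k$ for normalized $x \in M_n$ and $y \in \mathcal{F}_k$, and $1 - \phi_k(1_{M_n}) \preceq a_k$. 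The $\phi_k$ assemble, via Loring-type lifting for order zero maps, into an order zero contraction $\Phi \colon M_n \to A^\infty \cap A'$. The next step is to show that the defect $p := 1_{A^\infty} - \Phi(1_{M_n})$ is trivial in $F(A)$: by construction $d_\tau(p) = 0$ for every limit trace $\tau$ on $A^\infty$, and nuclearity plus simplicity of $A$ let one invoke Haagerup's theorem (quasitraces are traces) together with the Kirchberg--R{\o}rdam analysis of central sequences to conclude that strict comparison by limit traces forces $p$ into the annihilator ideal. Thus $\Phi$ descends to a unital order zero map $\overline{\Phi}_n \colon M_n \to F(A)$.

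Carrying this out for $n$ and for $n+1$, a standard reindexing argument using separability of $A$ produces a commuting pair of unital order zero maps into $F(A)$, which by the universal property of the dimension drop algebra assembles into a unital $*$-homomorphism $Z_{n,n+1} \to F(A)$. Letting $n$ vary and piecing together via the R{\o}rdam--Winter inductive description of $\mathcal{Z}$ as a limit of dimension drop algebras yields a unital $*$-homomorphism $\mathcal{Z} \to F(A)$, which by Toms--Winter/Kirchberg is equivalent to $A \cong A \otimes \mathcal{Z}$. The main obstacle is the defect step: while Cuntz subequivalence of $1 - \Phi(1_{M_n})$ to a small-trace element is immediate from the hypothesis, upgrading this to genuine vanishing in the quotient $F(A)$ is the technical heart of the argument and is precisely where nuclearity and simplicity (via Haagerup, strict comparison in sequence algebras, and the Kirchberg--R{\o}rdam structure theorem) are indispensable.
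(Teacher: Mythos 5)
First, note that the paper does not prove this statement at all: it is quoted verbatim as Theorem~4.1 of Hirshberg--Orovitz \cite{HO} and used as a black box, so the only meaningful comparison is with the argument in \cite{HO} itself. Your overall strategy (pass to the central sequence algebra, invoke the Matui--Sato machinery for nuclear algebras) is indeed the strategy of \cite{HO}, but your ``defect step'' contains a genuine error that cannot be repaired in the form you state it. You claim that $p = 1 - \Phi(1_{M_n})$ becomes zero in $F(A)$ because $d_\tau(p)=0$ for all limit traces. For unital $A$ the annihilator ideal is zero, so this amounts to claiming $\Phi(1_{M_n}) = 1$ in $A^\infty \cap A'$; but vanishing of $d_\tau$ on all (limit) traces does not force a positive element of a central sequence algebra to vanish, even in the presence of strict comparison (it only gives $p \precsim b$ for every nonzero positive $b$). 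Worse, the conclusion you are aiming for is false in general: a \emph{unital} c.p.c.\ order zero map out of $M_n$ is automatically a unital $*$-homomorphism (in the Winter--Zacharias structure theorem $\Phi = h\pi$ with $h = \Phi(1)$, so $h=1$ forces $\Phi = \pi$), and a unital copy of $M_n$ in $F(A)$ yields, by reindexing, $A \cong A \otimes M_{n^\infty}$. Taking $A = \mathcal{Z}$ --- which is tracially $\mathcal{Z}$-absorbing by \cite[Proposition~2.2]{HO} and satisfies every hypothesis of the theorem --- this would give $\mathcal{Z} \cong \mathcal{Z} \otimes M_{2^\infty}$ (indeed it would already produce nontrivial projections in $\mathcal{Z}$), which is false. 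So no argument can upgrade $\Phi$ to a unital order zero map into $F(A)$.

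The correct route, which is what \cite{HO} (following Matui--Sato) actually does, keeps the map non-unital: one first shows that tracial $\mathcal{Z}$-absorption implies strict comparison for $A$ (\cite[Theorem~3.3]{HO}), then uses nuclearity and strict comparison to establish property (SI), which produces $s \in F(A)$ with $s^*s = 1 - \Phi(1_{M_n})$ and $ss^* \le \Phi(e_{11})$. By the R{\o}rdam--Winter presentation of the dimension drop algebra, the pair $(\Phi, s)$ yields a unital $*$-homomorphism $Z_{n,n+1} \to F(A)$ (not two commuting unital order zero maps from $M_n$ and $M_{n+1}$, as you suggest), and the existence of such maps for some $n$ is equivalent to $A \cong A \otimes \mathcal{Z}$ by Toms--Winter. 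Your construction of the non-unital order zero map $\Phi\colon M_n \to A^\infty \cap A'$ with small-trace defect is the correct first half of that argument; the second half must go through (SI) and dimension drop algebras rather than through forcing unitality.
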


\vskip 3mm

Note that for a simple unital C*-algebra $A$ if $A$ is $\mathcal{Z}$-absorbing, 
then $A$ is tracially $\mathcal{Z}$-absorbing (\cite[Proposition~2.2]{HO}).

\vskip 3mm

\begin{thm}\label{thm:Jang-Su}
Let $P \subset A$ be an inclusion of unital C*-algebra and $E$ be a 
conditional expectation from $A$ onto $P$ with index finite type. 
Suppose that $A$ is simple, separable, unital, tracially $\mathcal{Z}$-absorbing and 
$E$ has the tracial Rohklin property. Then $P$ is tracially $\mathcal{Z}$-absorbing.
\end{thm}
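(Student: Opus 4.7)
The plan is to follow the two-case structure of the proof of Theorem~\ref{Thm:main theorem}, replacing each semiprojective building block by the matrix algebra $M_n$ and each unital *-homomorphism by an order zero witness of tracial $\mathcal{Z}$-absorption. By Lemma~\ref{lem:trcial and SP}, either $E$ has the Rokhlin property or $A$ has Property~(SP). In the former case one takes $g=1$ and uses Lemma~2.5 of \cite{OT} to get a unital *-homomorphism $\beta\colon A\to P^\infty$ fixing $P$ pointwise; any order zero witness $\phi\colon M_n\to A$ pulls back to $\beta\circ\phi\colon M_n\to P^\infty$ with identical commutation and Cuntz data. I therefore focus on the Property~(SP) case.

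Fix $F\subset P$ finite, $\varepsilon>0$, $z\in P^+\setminus\{0\}$ and $n\in\N$. By Proposition~\ref{prp:SP-property}, $P$ is simple with Property~(SP), so $\overline{zPz}$ contains two orthogonal nonzero projections $r_1,r_2$. First, apply the tracial $\mathcal{Z}$-absorption of $A$ (with $F\subset A$, tolerance $\varepsilon/2$, positive element $r_1$, and the given $n$) to obtain an order zero contraction $\phi\colon M_n\to A$ with $1-\phi(1)\preceq r_1$ in $A$ and $\|[\phi(m),y]\|<\varepsilon/2$ for every normalised $m\in M_n$ and $y\in F$. Next, exactly as in the proof of Theorem~\ref{Thm:main theorem}, use an auxiliary Rokhlin projection $e'$ and the test element $e'r_2\in A^\infty$ to extract a Rokhlin projection $e\in A'\cap A^\infty$ whose associated $g=(\Index E)\,E^\infty(e)\in P'\cap P^\infty$ satisfies $1-g\preceq r_2$ in $P^\infty$, via Lemma~\ref{lem:MNequivalent}.

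Now invoke Lemma~\ref{lem:embedding for tracial} to obtain the injective *-homomorphism $\beta\colon A\to gP^\infty g$ with $\beta(y)=yg$ for $y\in P$, and define $\psi_\infty=\beta\circ\phi\colon M_n\to P^\infty$; this is order zero because order zero maps are preserved under composition with *-homomorphisms. For $y\in F$ and normalised $m\in M_n$, since $g$ commutes with $y$ a direct check gives $\beta(\phi(m))\,y=\beta(\phi(m))\,\beta(y)$ and symmetrically on the other side, so
\begin{align*}
\|[\psi_\infty(m),y]\|\;=\;\|\beta([\phi(m),y])\|\;\le\;\|[\phi(m),y]\|\;<\;\varepsilon/2.
\end{align*}
For the Cuntz bound, decompose
\begin{align*}
1-\psi_\infty(1)\;=\;(1-g)\,+\,\beta(1-\phi(1))
\end{align*}
as an orthogonal sum (since $\beta(1-\phi(1))\le g$), push $1-\phi(1)\preceq r_1$ through $\beta|_A$ to obtain $\beta(1-\phi(1))\preceq\beta(r_1)=r_1g\le r_1$ in $P^\infty$, and combine with $1-g\preceq r_2$ to conclude $1-\psi_\infty(1)\preceq r_1+r_2\preceq z$ in $P^\infty$.

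To descend from $P^\infty$ to $P$, use projectivity of the cone $CM_n=C_0((0,1])\otimes M_n$ (Loring): $\psi_\infty$ corresponds to a *-homomorphism $CM_n\to P^\infty$ which lifts to a sequence of *-homomorphisms $CM_n\to P$, that is, to a representing sequence of order zero maps $\psi_k\colon M_n\to P$. For $k$ sufficiently large the approximate commutation $\|[\psi_k(m),y]\|<\varepsilon$ holds simultaneously for all normalised $m\in M_n$ and $y\in F$, and a standard Cuntz perturbation (combined with the slack afforded by $r_1+r_2\preceq z$) yields $1-\psi_k(1)\preceq z$ in $P$, establishing tracial $\mathcal{Z}$-absorption of $P$. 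The main obstacle is the bookkeeping around the Rokhlin projection, namely the passage $1-g\preceq r_2$ from $A^\infty$ to $P^\infty$ through Lemma~\ref{lem:MNequivalent} (which forces the slightly unusual choice of test element $e'r_2$); once that is in hand, everything else reduces to transporting order zero and Cuntz data through the *-homomorphism $\beta$ and invoking projectivity of $CM_n$.
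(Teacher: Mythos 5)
Your proposal follows essentially the same route as the paper: the same dichotomy via Lemma~\ref{lem:trcial and SP}, the same splitting of $\overline{zPz}$ into two orthogonal projections to absorb $1-g$ and $\beta(1-\phi(1))$ separately, and the same descent from $P^\infty$ to $P$ by lifting the order zero map. The only (immaterial) difference is that you invoke projectivity of the cone $C_0((0,1])\otimes M_n$ where the paper cites Winter's semiprojectivity of $C^*(\phi(M_n))$ --- these are interchangeable here --- and your final perturbation step is sketched at about the same level of detail as the paper's.
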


\begin{proof}
Take any  finite set $F \subset P$ and non-zero positive element $a \in P$
and $n \in \N$. 
Since $E \colon A \rightarrow P$ has the tracial Rokhlin property, 
$E$ has the Rokhlin property or $A$ has the Property (SP). 
If $E$ has the Rokhlin property, then $P$ is $\mathcal{Z}$-absorbing (\cite{OT}), 
and we are done.
Hence we may assume that $A$ has the Property (SP).

Since $A$ is  simple and has the Property (SP), 
$P$ has the Property (SP) by Proposition~\ref{prp:SP-property}. 
Then there exists orthogonal projections $p_1, p_2$ in $\overline{aPa}$.

Since $A$ is tracially $\mathcal{Z}$-absorbing, there is an order zero construction $\phi \colon M_n \rightarrow A$ such that the following hold:
\begin{enumerate}
\item  $1 - \phi(1) \preceq p_1$,
\item
For any normalized element $x \in M_n$ and any $y \in F$ we have $\|[\phi(x), y)]\| < \varepsilon$.
\end{enumerate}

Since $E \colon A  \rightarrow P$ has the tracial Rokhlin, there is a projection $e \in A' \cap A^\infty$ 
satisfying $({\mathrm{Index}E})E^\infty(e) = g$ is a projection and 
$1 - g \preceq p_2$. Moreover, there is an injective *homomorphism $\beta$ 
from $A$ into $gP^\infty g$ such that $\beta(1) = g$ and $\beta(a) = ag$ for $a \in P$. 

Then a function $\beta \circ \phi (= \psi) \colon M_n \rightarrow P^\infty$ is an order zero map such that 
\begin{enumerate}
\item[(i)]
\begin{align*}
1 - \psi(1) &= 1 - (\beta \circ \phi)(1) \\
            &= 1 - g + \beta(1 - \phi(1)) \\
            &\preceq p_2 + \beta(p_1)\\
            &= p_2 + p_1\beta(1)\\
            &\preceq a 
\end{align*}, that is, $1 - \psi(1) \preceq a$ in $P^\infty$.
\item[(ii)] For any normalized element $x \in M_n$ and $y \in F$
\begin{align*}
\|[\psi(x), y]\| &= \|[\beta(\phi(x)), y]\|\\
&= \| \beta(\phi(x))y - y\beta(\phi(x))\|\\
&= \| \beta(\phi(x))\beta(y) - \beta(y)\beta(\phi(x))\|\\
&= \| \beta(\phi(x)y - y \phi(x))\|\\
&\leq \|\phi(x)y - y\phi(x)\| < \varepsilon.
\end{align*}
\end{enumerate}

Since $C^*(\phi(M_n))$ is semiprojective by \cite[Proposition~3.2 (a)]{Winter:Covering dimension I}, 
there is a $k \in \N$ and a *-homomorphism 
$\tilde{\beta}\colon C^*(\phi(M_n))) \rightarrow \Pi P/\oplus_{i=1}^kP \rightarrow P^\infty$ such that 
$\pi_k \circ \tilde{\beta} = \beta$, where $\pi_k$ be the canonical map 
from $\Pi P/\oplus_{i=1}^k P$ to $P^\infty$. 
Write $\tilde{\beta}(x) = (\tilde{\beta_n}(x)) + \oplus_{i=1}^k P$ and $g = (g_l)$ for some projection $g_l \in P$ 
for $l \in \N$.
We have, then, for sufficient large $l$ 
there is an order zero map $\tilde{\beta_l} \circ \phi \colon M_l \rightarrow P$ 
such that 

\begin{itemize}
\item[(iii)] 
\begin{align*}
1 - \tilde{\beta_l}\circ \phi(1)
&= 1 - \tilde{\beta}_l(\phi(1))\\
&= 1 - g_l + g_l - \tilde{\beta}_l(\phi(1))\\
&= 1 - g_l + \tilde{\beta}_l(1 - \phi(1)) \ (\tilde{\beta}_l(1) = g_l) \\
&\preceq 1 - g_l + \tilde{\beta}_l(p_1)\\
&\preceq p_2 + p_1',
\end{align*}
where $p_1' \in \overline{g_lPg_l}$ is projection such that $\tilde{\beta}_l(p_1) \sim p_1'$. 
Note that since $\|\tilde{\beta}_l(p_1) - p_lg_l\|$ is very small, there are projections $p_1' \in \overline{g_lPg_l}$ and 
$p_1^{''} \in \overline{p_1Pp_1}$  such that $\tilde{\beta}_l(p_1) \sim p_1' \sim p_1^{''}$. 
Since $p_2 \bot p_1^{''}$, from \cite[1.1~Proposition]{Cu} 
$p_2 + p_1' \preceq p_2 + p_1^{''}$. Hence we have 
\begin{align*}
1 - \tilde{\beta_l}\circ \phi(1) &\preceq p_2 + p_1^{''}\\
&\preceq a
\end{align*}
\item[(iv)] For any normalized element $x \in M_n$ and $y \in F$ 
$\|[\tilde{\beta_n}\circ \phi(x), y]\| < 3 \varepsilon$.
\end{itemize}
This implies that $P$ is the tracially $\mathcal{Z}$-absorbing.
\end{proof}

\vskip 3mm

The following is the main theorem in this note.

\begin{thm}\label{cor:Jiang-Su absorption}
Let $P \subset A$ be an inclusion of unital C*-algebra and $E$ be a 
conditional expectation from $A$ onto $P$ with index finite type. 
Suppose that $A$ is simple, separable, nuclear, $\mathcal{Z}$-absorbing and 
$E$ has the tracial Rohklin property. P is $\mathcal{Z}$-absorbing.
\end{thm}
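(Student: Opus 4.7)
The plan is to deduce this from Theorem~\ref{thm:Jang-Su} together with the passage from tracial $\mathcal{Z}$-absorption to genuine $\mathcal{Z}$-absorption for nuclear C*-algebras (Theorem~\ref{thm:HO}). So the proof proposal is essentially a reduction: verify that the hypotheses of the two cited results are available for $A$ and inherited by $P$, then chain them together.

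First I would record the easy half. Since $A$ is simple and $\mathcal{Z}$-absorbing, by Proposition~2.2 of \cite{HO} the algebra $A$ is tracially $\mathcal{Z}$-absorbing. Combined with the hypothesis that $E$ has the tracial Rokhlin property, Theorem~\ref{thm:Jang-Su} applies directly and yields that $P$ is tracially $\mathcal{Z}$-absorbing. This takes care of the core dynamical content of the argument, because Theorem~\ref{thm:Jang-Su} already absorbed the case distinction between $E$ having the Rokhlin property and $A$ having Property (SP).

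Next I would verify that $P$ itself meets the hypotheses of Theorem~\ref{thm:HO}, namely that it is unital, separable, simple, and nuclear. Unitality is built into the definition of an inclusion of unital C*-algebras. Simplicity of $P$ is exactly Proposition~\ref{prp:SP-property}(1), which applies because $A$ is simple and $E$ has the tracial Rokhlin property. Separability of $P$ is immediate from $P \subset A$ and separability of $A$. For nuclearity I would invoke the fact that nuclearity passes to corners and to subalgebras that are images of conditional expectations of index-finite type: since $E\colon A \to P$ is of index-finite type and $A$ is nuclear, $P$ is nuclear by standard Watatani-type results (cf.\ \cite{Watatani:index}).

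Having verified all four structural hypotheses, I would then simply apply Theorem~\ref{thm:HO} to $P$ and conclude $P \cong P \otimes \mathcal{Z}$, i.e.\ $P$ is $\mathcal{Z}$-absorbing. The only point that requires care is the nuclearity of $P$; everything else is either built in or already proved earlier in the paper, so this is really where one has to be explicit. There is no new Rokhlin-type estimate to carry out — all such estimates have been absorbed into Theorem~\ref{thm:Jang-Su} — so the proof is genuinely short.
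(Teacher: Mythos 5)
Your proposal is correct and follows essentially the same route as the paper, whose proof is precisely the one-line reduction ``apply Theorem~\ref{thm:Jang-Su} to get that $P$ is tracially $\mathcal{Z}$-absorbing, then apply Theorem~\ref{thm:HO}.'' The only difference is that you make explicit the verifications the paper leaves implicit (simplicity of $P$ via Proposition~\ref{prp:SP-property}, separability, and nuclearity of $P$ from the index-finite inclusion), which is a reasonable and correct elaboration rather than a different argument.
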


\begin{proof}
It follows from Theorem~\ref{thm:Jang-Su} and Theorem~\ref{thm:HO}.
\end{proof}

\vskip 3mm

\begin{cor}
Let $A$ be an infinite dimensional simple separable unital C*-algebra and 
let $\alpha\colon G \rightarrow \Aut(A)$ be an action of a finite group $G$ 
with the tracial Rokhlin property. Suppose that $A$ is $\mathcal{Z}$-absorbing. 
Then we have 
\begin{enumerate}
\item (\cite{HO}) The fixed point algebra $A^\alpha$ and the crossed product $A \rtimes_\alpha G$
are $\mathcal{Z}$-absorbing.
\item For any subgroup $H $ of $G$ the fixed point algebra $A^H$ is $\mathcal{Z}$-absorbing.
\end{enumerate}
\end{cor}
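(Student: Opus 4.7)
The plan is to derive both parts of the corollary directly from Proposition~\ref{prp:tracial for finite group} together with Theorem~\ref{cor:Jiang-Su absorption}. Throughout I will use that, since $\alpha$ has the tracial Rokhlin property on the simple unital \ca{} $A$, Lemma~1.5 of \cite{Phillips:tracial} ensures $\alpha$ is outer, so $A \rtimes_\alpha G$ is simple. Nuclearity of $A$ (needed to invoke Theorem~\ref{cor:Jiang-Su absorption}) is implicit in the hypothesis, as already in the main theorem of Section~5.

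For part $(1)$, for the fixed point algebra I will first invoke Proposition~\ref{prp:tracial for finite group} to conclude that the canonical conditional expectation $E \colon A \to A^\alpha$, which has $\mathrm{Index}\,E = |G|$ and hence is of finite-index type, has the tracial Rokhlin property. Applying Theorem~\ref{cor:Jiang-Su absorption} with $P = A^\alpha$ then yields $A^\alpha \cong A^\alpha \otimes \mathcal{Z}$. For the crossed product, the standard imprimitivity argument shows that for an outer action of a finite group on a simple unital \ca{}, $A^\alpha$ is a full corner of $A \rtimes_\alpha G$ (cut down by the projection $\tfrac{1}{|G|}\sum_{g\in G} u_g$), so $A^\alpha$ and $A \rtimes_\alpha G$ are strongly Morita equivalent; since $\mathcal{Z}$-absorption is preserved under stable isomorphism, and hence under Morita equivalence for separable \ca s, $A \rtimes_\alpha G$ inherits $\mathcal{Z}$-absorption from $A^\alpha$.

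For part $(2)$, the main step is to check that the restricted action $\alpha|_H \colon H \to \Aut(A)$ inherits the tracial Rokhlin property from $\alpha$. Given a finite $F \subset A$, an $\varepsilon > 0$, and a nonzero positive $x \in A$, I would fix a system of right coset representatives $g_1, \ldots, g_m$ with $G = \bigsqcup_{i=1}^m H g_i$, apply the tracial Rokhlin property of $\alpha$ with approximation parameter $\varepsilon/m$ to obtain mutually orthogonal projections $\{e_g\}_{g \in G}$, and set
\begin{equation*}
f_h := \sum_{i=1}^m e_{h g_i} \qquad (h \in H).
\end{equation*}
A direct estimate shows the $f_h$ are mutually orthogonal projections with $\|\alpha_h(f_{h'}) - f_{hh'}\| < \varepsilon$, $\|[f_h, a]\| < \varepsilon$ for all $a \in F$, and
\begin{equation*}
1 - \sum_{h \in H} f_h \;=\; 1 - \sum_{g \in G} e_g
\end{equation*}
Murray--von Neumann equivalent to a projection in $\overline{xAx}$. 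Hence $\alpha|_H$ has the tracial Rokhlin property, and applying part $(1)$ to $\alpha|_H$ in place of $\alpha$ yields that $A^H = A^{\alpha|_H}$ is $\mathcal{Z}$-absorbing.

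The bulk of the technical difficulty has already been absorbed into Theorem~\ref{cor:Jiang-Su absorption}, so what remains is mostly bookkeeping. The two places where care is needed are: (a) the coset-grouping argument in part~$(2)$, where the initial $\varepsilon$-choice must be rescaled by $[G:H]$ so that all three tracial-Rokhlin estimates survive for the restricted action; and (b) the appeal to the Morita equivalence between $A^\alpha$ and $A \rtimes_\alpha G$ to handle the crossed-product case of part~$(1)$, which requires (and is enabled by) the outerness of $\alpha$ supplied by Lemma~1.5 of \cite{Phillips:tracial}.
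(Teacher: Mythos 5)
Your proposal is correct and follows essentially the same route as the paper: Proposition~\ref{prp:tracial for finite group} plus Theorem~\ref{cor:Jiang-Su absorption} for $A^\alpha$, the fact that $A^\alpha$ is a full corner of the simple crossed product (the paper phrases this as $A\rtimes_\alpha G\cong pM_n(A^\alpha)p$ and cites \cite{TW}, you phrase it as Morita equivalence preserving $\mathcal{Z}$-absorption), and the passage to subgroups followed by part $(1)$. The only difference is that you prove the restriction of the tracial Rokhlin property to $H$ directly by the coset-grouping argument, whereas the paper simply cites \cite[Lemma~5.6]{ESWW}; your argument is a correct self-contained substitute for that citation.
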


\begin{proof}
(1): 
Since the canonical conditional expectation $E\colon A \rightarrow A^\alpha$ has the 
tracial Rokhlin property by Proposition~\ref{prp:tracial for finite group}, $A^\alpha$ is $\mathcal{Z}$-absorbing 
by Corollary~\ref{cor:Jiang-Su absorption}. 

Let $|G| = n$. Then $A \rtimes_\alpha G$ is isomorphic to $pM_n(A^\alpha)p$ for some 
projection $p \in M_n(A^\alpha)$. Since $A^\alpha$ is $\mathcal{Z}$-absorbing, $pM_n(A^\alpha)p$ 
is $\mathcal{Z}$-absorbing \cite{TW}, hence $A \rtimes_\alpha G$ is $\mathcal{Z}$-absorbing.

(2): Since $\alpha_{|H}\colon H \rightarrow \Aut(A)$ has the tracial Rokhlin property by 
\cite[Lemma~5.6]{ESWW}, we know that $A^H$ is $\mathcal{Z}$-absorbing by (1).
\end{proof}

\section{Cuntz-equivalence for inclusions of C*-algebras}

In this section we study the heredity for Cuntz equivalence for an inclusion of unital
C*-algebras with the tracial Rokhlin proerty.

Let $M_\infty(A)^+$ denote the disjoint union 
$\cup_{n=1}^\infty M_n(A)^+$. For $a \in M_n(A)^+$ and $b \in M_m(A)^+$ 
set $a \oplus b = \mathrm{diag}(a, b) \in M_{n+m}(A)^+$, 
and write $a \preceq b$ if there is a sequence $\{x_k\}$ in $M_{m,n}(A)$
such that $x_k^*bx_k \rightarrow a$. Write $a \sim b$ if 
$a \preceq b$ and $b \preceq a$. 
Put $W(A) = =M_\infty(A)^+/\sim$, and let $\langle a\rangle \in W(A)$ 
be the equivalence class containing $a$. 
Then $W(A)$ is a positive ordered abelian semigroup with equipped with the relations:

\begin{align*}
\langle a\rangle + \langle b \rangle = \langle a \oplus b\rangle, \quad
\langle a \rangle \leq \langle b\rangle \Leftrightarrow a \preceq b, 
\quad a, b \in M_\infty(A)^+.
\end{align*}

We call $W(A)$ a Cuntz semigroup.


\vskip 3mm

\begin{lem}\label{lem:Cuntz equivalent}
Let $P \subset A$ be an inclusion of unital C*-algebras with index finite type
and $E\colon A \rightarrow P$ has the tracial Rokhlin property.
Suppose that positive elements $a, b \in P^\infty$ satisfy $eb = be$ and $a \preceq eb$ in $A^\infty$, where 
$e$ is the Rokhlin projection for $E$.
Then $a \preceq b$ in $P^\infty$.
\end{lem}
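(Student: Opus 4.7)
The plan is to mimic the partial-isometry argument of Lemma~\ref{lem:MNequivalent} in the setting of positive elements. From $a \preceq eb$ in $A^\infty$ choose $\{x_k\} \subset A^\infty$ with $x_k^*(eb)x_k \to a$. Since $e^2=e$ and $eb=be$, we have $eb = ebe$, so $x_k^*(eb)x_k = (ex_k)^* b (ex_k)$. Setting $s_k := ex_k \in A^\infty$, this yields $s_k^* b s_k \to a$ together with $es_k = s_k$.

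Next, in analogy with $v = (\Index E)^{1/2} E^\infty(s)$ from Lemma~\ref{lem:MNequivalent}, define $v_k := (\Index E)^{1/2} E^\infty(s_k) \in P^\infty$. The key claim to establish is the identity $v_k^* b v_k = E^\infty(s_k^* b s_k)$ in $P^\infty$. To verify it, I would multiply by the Jones projection $e_p$ for the basic extension $P \subset A \subset B = \langle A, e_p\rangle$ and, exactly as in Lemma~\ref{lem:MNequivalent}, shuffle the factors $e, e_p, s_k$ past $b$ using: (a) the Watatani identity $E^\infty(y) e_p = e_p y e_p$ for $y \in A^\infty$; (b) the fact that $e_p$ commutes with $P^\infty$ (hence with $b$ and with $v_k$); (c) the hypothesis $eb=be$; (d) $e s_k = s_k$; and (e) $(\Index E) e e_p e = e$, which is Remark~\ref{rmk:simple}(3). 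The outcome is $v_k^* b v_k \cdot e_p = e_p\, s_k^* b s_k\, e_p$. Applying the dual conditional expectation $\hat E^\infty$ to both sides and using $\hat E^\infty(e_p z e_p) = (\Index E)^{-1} E^\infty(z)$ for $z \in A^\infty$ together with $\hat E^\infty(e_p) = (\Index E)^{-1}$ then yields the desired equality.

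To conclude, since $s_k^* b s_k \to a$ in $A^\infty$ and $E^\infty$ is norm-continuous with $E^\infty(a) = a$ (as $a \in P^\infty$), we obtain $v_k^* b v_k \to a$. Because $v_k \in P^\infty$, this is precisely a witness for $a \preceq b$ in $P^\infty$.

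The main obstacle is the central calculation $v_k^* b v_k \cdot e_p = e_p\, s_k^* b s_k\, e_p$: the positive element $b$ has to be moved past $e$, past $e_p$, and through $s_k^*$ and $s_k$ simultaneously. This works only because $b$ commutes with $e$ by the hypothesis $eb=be$ and with $e_p$ because $b\in P^\infty$. Note also that without the hypothesis $eb=be$ the initial reduction $x_k^*(eb)x_k = (ex_k)^*b(ex_k)$ already fails, so this assumption is essential to the approach.
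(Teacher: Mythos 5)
Your proposal is correct and takes essentially the same route as the paper's own proof: your $v_k=(\Index E)^{1/2}E^\infty(ex_k)$ is exactly the paper's $w_n=(\Index E)^{1/2}E^\infty(ev_n)$, and the key identity $v_k^*bv_k=E^\infty(s_k^*bs_k)$ is established there by the same computation with the Jones projection $e_p$, the relation $(\Index E)\,ee_pe=e$, and the dual expectation $\hat{E}^\infty$, after which contractivity of $E^\infty$ gives $v_k^*bv_k\to a$.
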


\vskip 3mm

\begin{proof}
Since $a \preceq b$ in $A^\infty$, there is a sequence $\{v_n\}_{n\in \N}$ in $A^\infty$ such that 
$\|a - v_n^*ebv_n\| \rightarrow 0 \ (n \rightarrow \infty)$.

Let $E\colon A \rightarrow P$ be a conditional expectation of index finite type. 
Set $w_n = (\mathrm{Index} E)^{\frac{1}{2}}E^\infty(ev_n)$ for each $n \in \N$. 
Then, since
\begin{align*}
w_n^*bw_ne_P &= (\mathrm{Index} E)E^\infty(v_n^*e)bE^\infty(ev_n)e_P\\
&= (\mathrm{Index} E)E^\infty(v_n^*eb)E^\infty(ev_n)e_P\\
&= (\mathrm{Index} E)e_Pv_n^*ebe_Pev_ne_P\\
&= (\mathrm{Index} E)e_Pv_n^*bee_Pev_ne_P\\
&= e_Pv_n^*bv_ne_P\\
&= E^\infty(v_n^*bv_n)e_P,
\end{align*}
$w_n^*bw_n = E^\infty(v_n^*bv_n)$. Therefore,

\begin{align*}
\|a - w_n^*bw_n\| &= \|a - E^\infty(v_n^*ebv_n)\|\\
&= \|E^\infty(a - v_n^*ebv_n)\|\\
&\leq \|a - v_n^*ebv_n\| \rightarrow 0 \ (n \rightarrow \infty).
\end{align*}

This implies that $a \preceq b$ in $P^\infty$.
\end{proof}

\vskip 3mm

\vskip 3mm

\begin{prp}\label{prp:Cuntz equivalent}
Let $P \subset A$ be an inclusion of unital C*-algebras with index finite type.
Suppose that $E\colon A \rightarrow P$ has the tracial Rokhlin property. 
If two positive elements $a, b \in P$ satisfy $ a \preceq b$ in $A$, then 
$a \preceq b$ in $P$.
\end{prp}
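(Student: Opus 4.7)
The plan is to use the dichotomy from Lemma~\ref{lem:trcial and SP}: either $E$ has the Rokhlin property, or $A$ has Property~(SP). In the Rokhlin case $g = 1$ and Lemma~\ref{lem:embedding for tracial} provides a unital *-homomorphism $\beta \colon A \to P^\infty$ with $\beta|_P = \mathrm{id}_P$; given $v_n \in A$ with $v_n^* b v_n \to a$, applying $\beta$ yields $\beta(v_n)^* b\,\beta(v_n) = \beta(v_n^* b v_n) \to a$ in $P^\infty$, and a standard diagonal extraction of representatives then gives $a \preceq b$ in $P$.

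In the SP case, I would invoke R{\o}rdam's lemma and prove $(a-\epsilon)_+ \preceq b$ in $P$ for arbitrary $\epsilon > 0$. If $b$ has a spectral gap $(0, \delta_0)$ then $b$ is invertible in the corner $(1-p_0)P(1-p_0)$ where $p_0 = \chi_{\{0\}}(b) \in P$, and using $(a-\epsilon)_+ p_0 = 0$ (which follows from $v^*(b-\delta)_+ v = (a-\epsilon/2)_+$ with $\delta \geq \delta_0$) the conclusion is immediate. Otherwise, pick $\delta > 0$ and $v \in A$ with $v^*(b-\delta)_+ v = (a-\epsilon/2)_+$, and via continuous functional calculus choose a continuous function $h \colon [0,\infty) \to [0,\infty)$ with $h(0) = 0$, $\mathrm{supp}\,h \subset [0,\delta]$, and $h(b) \neq 0$. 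Then $h(b) \in \overline{bPb}$ and $h(b)(b-\delta)_+ = 0$. Property~(SP) of $P$ (Proposition~\ref{prp:SP-property}) yields a non-zero projection $p \in \overline{h(b)Ph(b)} \subset \overline{bPb}$ with $p \preceq b$ and $p \perp (b-\delta)_+$. By the tracial Rokhlin property and Lemma~\ref{lem:MNequivalent}, choose the Rokhlin projection $e$ so that $1-g \preceq p$ in $P^\infty$, where $g = (\mathrm{Index}\,E)\,E^\infty(e) \in P' \cap P^\infty$.

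Applying $\beta \colon A \to gP^\infty g$ (with $\beta(x) = xg$ for $x \in P$) to the equation $v^*(b-\delta)_+ v = (a-\epsilon/2)_+$ yields $(a-\epsilon/2)_+ g \preceq (b-\delta)_+ g$ in $P^\infty$. Since $g$ commutes with $a$, there is an orthogonal decomposition $(a-\epsilon)_+ = (a-\epsilon)_+ g + (a-\epsilon)_+ (1-g)$, with $(a-\epsilon)_+ g \preceq (a-\epsilon/2)_+ g \preceq (b-\delta)_+ g$ and $(a-\epsilon)_+ (1-g) \preceq 1-g \preceq p$. The orthogonality $p \perp (b-\delta)_+$ in $P$ forces $p \perp (b-\delta)_+ g$ in $P^\infty$, so $(b-\delta)_+ g + p$ is an orthogonal sum lying in $\overline{bP^\infty b}$, hence $\preceq b$; combining the two pieces yields $(a-\epsilon)_+ \preceq (b-\delta)_+ g + p \preceq b$ in $P^\infty$. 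A standard diagonal descent produces $(a-\epsilon)_+ \preceq b$ in $P$, and letting $\epsilon \to 0$ finishes.

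The main obstacle is arranging the projection $p$ to satisfy both $1 - g \preceq p$ in $P^\infty$ (via the Rokhlin flexibility and Lemma~\ref{lem:MNequivalent}) and $p \perp (b-\delta)_+$ in $P$ (via the functional-calculus cutoff $h(b)$) simultaneously inside the single hereditary subalgebra $\overline{h(b)Ph(b)}$; this subalgebra is available precisely when $b$ has non-trivial spectrum near $0$, which is the generic case, the degenerate spectral-gap case being handled separately above.
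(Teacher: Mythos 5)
Your main argument is essentially the paper's: reduce via R{\o}rdam's lemma to showing $(a-\varepsilon)_+\preceq b$ in $P$, manufacture a nonzero ``spare'' element of $\overline{bPb}$ orthogonal to $(b-\delta)_+$, pick the Rokhlin projection so that $1-g$ is dominated by that spare element, and then split $(a-\varepsilon)_+=(a-\varepsilon)_+g+(a-\varepsilon)_+(1-g)$, pushing the first summand through $\beta$ into $\overline{bP^\infty b}$, absorbing the second into the spare element via the descent lemma, and adding the two orthogonal pieces by Cuntz's Proposition~1.1. The only cosmetic differences are that you extract a projection $p\in\overline{h(b)Ph(b)}$ using Property~(SP) and Lemma~\ref{lem:MNequivalent}, where the paper works directly with the positive element $c=g_\delta(b)$ and Lemma~\ref{lem:Cuntz equivalent}, and that you split off the Rokhlin-property case explicitly at the start. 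Both of those are fine.

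There is, however, one genuine gap, precisely in the degenerate case you isolate. Your claim that $(a-\varepsilon)_+p_0=0$ ``follows from $v^*(b-\delta)_+v=(a-\varepsilon/2)_+$'' is false: Cuntz subequivalence to an element orthogonal to $p_0$ gives no literal orthogonality relation between $(a-\varepsilon)_+$ and $p_0$ (already in $M_2$ one can have $a$ a rank-one projection not orthogonal to $p_0$ yet $a\preceq 1-p_0$). So the spectral-gap branch as written does not close; when $0$ is an isolated point of $\mathrm{sp}(b)$ you have $b\sim q:=\chi_{(0,\infty)}(b)\in P$ and no room in $\overline{bPb}$ orthogonal to $(b-\delta)_+$, and the descent from $a\preceq q$ in $A$ to $a\preceq q$ in $P$ still has to be argued (e.g.\ by finding the spare room on the side of $a$ rather than $b$, or by reducing to the projection comparison of Lemma~\ref{lem:MNequivalent} after cutting $a$ down). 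To your credit, you have put your finger on a point the paper itself treats too casually: the paper only excludes the case $b$ invertible, and its element $c=g_\delta(b)$ satisfies $g_\delta(0)=1$, so the assertion $c\in\overline{bPb}$ requires replacing $g_\delta$ by a bump vanishing at $0$ and hence requires $0$ to be a non-isolated point of $\mathrm{sp}(b)$ --- exactly the dichotomy you set up. You have identified the right subtlety but not yet repaired it.
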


\begin{proof}
Let $a, b \in P$ be positive elements such that $a \preceq b$ in $A$ and $\varepsilon > 0$.
Since for any constant $K > 0$ $a \preceq b$ is equivalent to $Ka \preceq Kb$, we may assume that 
$a$ and $b$ are contractive. When $b$ is invertible, then $a = (a^{1/2}b^{-1/2})b(a^{1/2}b^{-1/2})^*$, 
and $a \preceq b$ in $P$. Hence we may assume that $b$ has $0$ as a spectrum.

Since $a \preceq b$ in $A$, there is $\delta > 0$ and $r \in A$ such that 
$f_\varepsilon(a) = rf_\delta(b)r^*$, where 
$f_\varepsilon\colon \R^+ \rightarrow \R^+$ by 
$$
f_\varepsilon(t) = \left\{\begin{array}{ll}
0,&t \leq \varepsilon\\
\varepsilon^{-1}(t - \varepsilon), &\varepsilon \leq t \leq 2\varepsilon\\
1, &t \geq 2 \varepsilon.
\end{array}
\right.
$$
Set $a_0 = f_\delta(b)^{1/2}r^*rf_\delta(b)^{1/2}$. Then 
$f_\varepsilon(a) \sim a_0$. Set a continuos function $g_\delta(t)$ on $[0,1]$ by
$$
g_\delta(t) = \left\{\begin{array}{ll}
\delta^{-1}(\delta - t) &0 \leq t \leq \delta\\
0 & \delta \leq t \leq 1.
\end{array}
\right.
$$
Since $b$ has $0$ as a spectrum, $g_\delta(b) \not= 0$ and $g_\delta(b)f_\delta(b) = 0$. 
Note that $g_\delta(b) (= c)$  belongs to $\overline{bPb}$. 
Therefore, 
there are positive elements $a_0$ in $\overline{bAb}$ and $c$ in $\overline{bPb}$ 
such that $(a - \varepsilon)_+ \preceq a_0 + c$ in $A$. Indeed,
\begin{align*}
(a - \varepsilon)_+ &\preceq f_\varepsilon(a)\\
&\sim a_0\\
&\preceq a_0 + c. \ (\hbox{\cite[Proposition~1.1]{Cu}})
\end{align*}

Take a Rokhlin projection $e \in A' \cap A^\infty$ for $E$. Then there is a projection $g \in P' \cap P^\infty$ 
such that $(1 - g) \preceq ec$. Hence $(a - \varepsilon)_+(1 - g) \preceq ec$ in $A^\infty$.
By Lemma~\ref{lem:Cuntz equivalent} $(a - \varepsilon)_+(1 -g) \preceq c$ in $P^\infty$.

We have then in $P^\infty$

\begin{align*}
(a - \varepsilon)_+ &= (a - \varepsilon)_+g + (a - \varepsilon)_+(1 - g)\\
&= \beta((a - \varepsilon)_+) + (a - \varepsilon)_+(1 - g)\\
&\preceq \beta(a_0) + (a - \varepsilon)_+(1 - g) \   \ (\hbox{\cite[Proposition~1.1]{Cu}})\\
&\preceq \beta(a_0) + c \in \overline{bP^\infty b}, \ (\hbox{\cite[Proposition~1.1]{Cu}})
\end{align*}
where $\beta \colon A \rightarrow gP^\infty g$ by Lemma~\ref{lem:embedding for tracial}.
Hence $(a - \varepsilon)_+ \preceq b$ in $P^\infty$.

Since $\varepsilon > 0$, we have $a \preceq b$ in $P^\infty$, 
and $a \preceq b$ in $P$.
\end{proof}

\vskip 3mm

\section{Strict comparison property}

In this section we study the strictly comparison property for a Cuntz semigroup
and show that for an inclusion $P \subset A$ of exact, unital C*-algebras with the tracial Rokhlin property 
if $A$ has strictly comparison, then so does $P$. When $E \colon A \rightarrow P$ has the Rokhlin property, 
the statement is proved in \cite{OT2}.

\vskip 3mm

A dimension function on a C*-algebra $A$ is a function $d\colon M_\infty(A)^+ \rightarrow \R^+$
which satisfies $d(a \oplus b) = d(a) + d(b)$, and $d(a) \leq d(b)$ if $a \preceq b$ 
for all $a, b \in M_\infty(A)^+$. If $\tau$ is a positive trace on $A$, then 
$$
d_\tau(a) = \lim_{n\rightarrow\infty}\tau(a^\frac{1}{n}) = \lim_{\varepsilon \rightarrow 0+}\tau(f_\varepsilon(a)), \quad a \in M_\infty(A)^+
$$
defines a dimension function on $A$.
Every lower semicontinuous dimension function 
on an exact C*-algebra arises in this way (\cite[Theorem~II.2.2]{BH}, 
\cite{Ha}, \cite{Kir}). 
For the Cuntz semigroup $W(A)$ 
an additive order preserving mapping $\tilde{d}\colon W(A) \rightarrow \R^+$ 
is given by $\tilde{d}(\langle a\rangle) = d(a)$ from a dimension function $d$ on $A$.
We use the same symbol to denote the dimension function on $A$ and the corresponding state on $W(A)$.

Recall that an C*-algebra $A$ has strict comparison if whenenver $x, y \in W(A)$ are 
such that $d(x) < d(y)$ for all dimension function $d$ on $A$, then $x \leq y$.
If $A$ is simple, exact and unital , then   the strictly comparison property 
is equivalent to the strictly comparison proeprty by traces, that is, for all $x, y \in W(A)$ 
one has that $x \leq y$ if $d_\tau(x) < d_\tau(y)$ for  all tracial states $\tau$ 
on $A$ (\cite[Corollary~4.6]{Rordam:Z-absorbing}).

Let $\mathrm{T}(A)$ be the set of all traces on a C*-algebra $A$.

\vskip 3mm

\begin{prp}\label{prp:tracial states}
Let $E\colon A \rightarrow P$ be of index finite type and has the tracial Rokhlin property.
Then the restriction map defines a bijection from the set $\mathrm{T}(A)$ to the set $\mathrm{T}(P)$. 
\end{prp}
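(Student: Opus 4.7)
\textit{Plan.} The restriction $\tau \mapsto \tau|_P$ is obviously a well-defined affine continuous map $\mathrm{T}(A) \to \mathrm{T}(P)$. For bijectivity the plan is to use the partial embedding $\beta \colon A \to gP^{\infty}g$ from \LL{lem:embedding for tracial} together with the Rokhlin projection $e \in A' \cap A^{\infty}$, exploiting the fact that by varying the test element $z$ used to produce $e$, the complement $1 - g$ can be made arbitrarily small in any preassigned trace.

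For injectivity, suppose $\tau_1, \tau_2 \in \mathrm{T}(A)$ agree on $P$ and fix $a \in A$ with $\|a\| \leq 1$ and $\varepsilon > 0$. I would choose a positive $z \in A$ with $d_{\tau_i}(z) < \varepsilon$ for $i = 1, 2$ and invoke the tracial Rokhlin property with this $z$ to obtain $e$ and $g = (\mathrm{Index}\,E) E^{\infty}(e)$ such that $1 - g$ is equivalent to a projection in $\overline{z A^{\infty} z}$; extending each $\tau_i$ to an ultrapower, this gives $\tau_i^{\infty}(1 - g) \leq d_{\tau_i}(z) < \varepsilon$. The decomposition $\tau_i(a) = \tau_i^{\infty}(ag) + \tau_i^{\infty}(a(1-g))$ bounds the second term by $\varepsilon$. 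For the first term, the identity $ae = \beta(a)e$ from \LL{lem:embedding for tracial}, combined with the centrality of $e \in A' \cap A^{\infty}$ relative to $\tau_i^{\infty}$ (valid in the $\|\cdot\|_{2, \tau_i}$-ultrapower), yields $\tau_i^{\infty}(ag) = \tau_i^{\infty}(\beta(a))$. Since $\beta(a) \in P^{\infty}$, the right side depends only on $\tau_i|_P$ and hence agrees for $i = 1, 2$. Letting $\varepsilon \to 0$ gives $\tau_1(a) = \tau_2(a)$.

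For surjectivity, given $\sigma \in \mathrm{T}(P)$ I would define $\tilde\tau(a) = \sigma^{\infty}(\beta(a))/\sigma^{\infty}(g)$, where $\sigma^{\infty}$ is an ultrapower extension of $\sigma$ to $P^{\infty}$ and $e$ is produced by the tracial Rokhlin property applied to a test element $z$ with $d_{\sigma}(z) < \varepsilon$, so the denominator is at least $1 - \varepsilon > 0$. The trace property of $\tilde\tau$ follows from $\beta$ being a $*$-homomorphism and $\sigma^{\infty}$ being tracial, and for $p \in P$ one has $\beta(p) = pg$ together with $\sigma^{\infty}(pg) = \sigma(p)\sigma^{\infty}(g)$ (by the central position of $g \in P' \cap P^{\infty}$ with respect to $\sigma^{\infty}$), so $\tilde\tau|_P = \sigma$. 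The injectivity step already established forces $\tilde\tau$ to be independent of the choice of $e$.

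The hardest part will be making the centrality assertion rigorous, namely $\tau^{\infty}(yc) = \tau(y)\tau^{\infty}(c)$ for $y \in A$ and a projection $c \in A' \cap A^{\infty}$. This does not hold automatically in the norm-ultrapower $A^{\infty}$ used throughout the paper, but does in the $\|\cdot\|_{2,\tau}$-ultrapower by Connes-type centralizer arguments, once one reduces to the case of an extremal trace. Bridging the norm-ultrapower formulation of the tracial Rokhlin property and the trace-theoretic calculation above is the most delicate step of the proof.
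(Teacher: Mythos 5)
Your surjectivity argument is essentially the paper's: the paper also inverts the restriction map by composing with the embeddings $\beta_n\colon A \to g_nP^\infty g_n$ of Lemma~\ref{lem:embedding for tracial} and letting $\tau^\infty(1-g_n)\to 0$. It takes a weak-$*$ limit of $\tau^\infty\circ\beta_n$ rather than normalizing by $\sigma^\infty(g)$, which has the advantage of sidestepping the identity $\sigma^\infty(pg)=\sigma(p)\sigma^\infty(g)$ entirely: one only needs $|\sigma^\infty(pg_n)-\sigma(p)|\leq\|p\|\,\sigma^\infty(1-g_n)\to 0$ for $p\in P$, which is a norm estimate requiring no centrality. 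For injectivity, however, you and the paper diverge completely, and your route has a genuine gap.

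The gap is the step $\tau_i^\infty(ag)=\tau_i^\infty(\beta(a))$. The centrality principle you invoke, $\tau^\infty(yc)=\tau(y)\tau^\infty(c)$ for $c\in A'\cap A^\infty$, does not deliver it even if one grants it for extremal traces (where it does hold, since a norm-central sequence has its weak-$*$ limit points in the center of the factor $\pi_\tau(A)''$). First, $g=(\mathrm{Index}\,E)E^\infty(e)$ lies only in $P'\cap P^\infty$, not in $A'\cap A^\infty$, so $\tau^\infty(ag)$ cannot be factored against $a\in A$ by centrality of $g$. Second, $ae=\beta(a)e$ relates $a$ to $\beta(a)$ only after multiplication by $e$, and $\beta(a)=(\mathrm{Index}\,E)E^\infty(ae)$ is itself a sequence in $P^\infty$, not a fixed element of $A$; the limit-point argument proving $\tau(ye_n)\to\tau(y)\tau^\infty(e)$ for fixed $y$ says nothing about $\tau(\beta(a)_ne_n)$. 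Third, the reduction to extremal traces requires a uniform (or measurable) choice of $z$, $e$, $g$ over the trace simplex, which you have not supplied. You flag this as the delicate step but the proposition's conclusion is exactly what is at stake there. The paper avoids all of it: its injectivity proof uses no Rokhlin projection and no ultrapower, only the index-finite-type hypothesis --- a quasi-basis $\{(u_i,u_i^*)\}$ gives $x=\sum_iE(xu_i)u_i^*$, and a Schwarz-inequality estimate applied to the Jordan decomposition of $\tau_1-\tau_2$ forces a tracial functional vanishing on $P$ to vanish on $A$. You should either replace your injectivity argument by an algebraic one of this kind or carry out the bridge to the $\|\cdot\|_{2,\tau}$-ultrapower in full; as written the argument does not close.
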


\begin{proof}
Since $E\colon A \rightarrow P$ has the tracial Rokhlin property, 
for any $n$ there are a projection $g_n \in P^\infty$ and an injective *-homomorphism $\beta_n$
from $A$ to $g_nP^\infty g_n$ such that $\beta(x) = xg_n$ for all $x \in P$ and 
$\tau^\infty(1 - g_n) < \frac{1}{n}$ for any $\tau \in \mathrm{T(P)}$. 
Then we show that the map 
$\mathrm{T(P)} \ni \tau \mapsto \mathrm{w^*-}\lim(\tau^\infty \circ \beta_n) \in \mathrm{T(A)}$ 
is an inverse of the restriction map, 
where $\tau^\infty$ be the extended tracial state of $\tau$ on $P^\infty$ and 
$\mathrm{w^*}-$ means the weak*-limit. 

Let $R$ be the restriction map of $\tau \in \mathrm{T(A)}$ to $P$. 
For $\tau \in T(P)$ $\tau^\infty \circ \beta_n$ is a tracial state on $A$. 
Then for any $a \in P$
\begin{align*}
R(\mathrm{w^*-}\lim(\tau^\infty \circ \beta_n))(a) 
&= \mathrm{w^*-}\lim(\tau^\infty(ag_n))\\
&=\tau^\infty(a)\\
&= \tau(a).\\
\end{align*}
Hence  $R$ is surjective.

Suppose that $R(\tau_1) = R(\tau_2)$ for $\tau_1, \tau_2 \in T(A)$.
Since $E\colon A \rightarrow P$ is of index finite type, there exists a quasi-basis 
$\{(u_i, u_i^*)\}_{i=1}^n \subset A \times A$ such that 
for any $x \in A$ 
$$
x = \sum_{i=1}^nE(xu_i)u_i^* = \sum_{i=1}^nu_iE(u_i^*x).
$$

Let $\tau_1 - \tau_2 = (\tau_1 - \tau_2)_{+} - (\tau_1 - \tau_2)_{-}$ be the Jordan decomposition of 
$\tau_1 - \tau_2$. 
Since $E$ is completely positive, $\tau \circ E$ is completely positive. 
We have then by the Schwarz inequality
\begin{align*}
|(\tau_1-\tau_2)_{+}(E(xu_i)u_i^*)\tau(E(xu_i)u_i^*)^*| &\leq |(\tau_1-\tau_2)_{+}(E(xu_i)u_i^*(E(xu_i)^*u_i^*)^*)|\\
&= |(\tau_1 - \tau_2)_{+}(E(xu_i)u_i^*u_iE(xu_i)^*)|\\
&\leq \|u_i\|^2|(\tau_1 - \tau_2)_{+}(E(xu_i)E(xu_i)^*)| = 0.
\end{align*}
Hence $(\tau_1 - \tau_2)_+(E(xu_i)u_i^*) = 0$. 
This means that 
\begin{align*}
(\tau_1 - \tau_2)_{+}(x) &= \sum_{i=1}^n(\tau_1 - \tau_2)_{+}(E(xu_i)u_i^*)  = 0. 
\end{align*}
Hence $(\tau_1 - \tau_2)_{+} = 0$. 
Similarly, $(\tau_1 - \tau_2)_{-} = 0$. Hence, $\tau_1 - \tau_2 = 0$, and $R$ is injective.
\end{proof}

\vskip 3mm

\vskip 3mm

\begin{thm}\label{thm:ComparisonTheorem}
Let $P \subset A$ be an inclusion of unital C*-algebras with index finite type.
Suppose that $A$ is simple and exact and $A$ 
has strict comparison, and $E\colon A \rightarrow P$ 
has the tracial Rokhlin property. 
Then $P$ has strict comparison.
\end{thm}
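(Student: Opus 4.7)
The plan is to reduce strict comparison in $P$ to strict comparison in $A$ via the bijection of tracial states established in Proposition~\ref{prp:tracial states}, and then transport Cuntz subequivalence back down to $P$ using Proposition~\ref{prp:Cuntz equivalent}. First, since $A$ is simple and $E$ has the tracial Rokhlin property, Proposition~\ref{prp:SP-property} gives that $P$ is simple; exactness passes automatically to the C*-subalgebra $P \subset A$, and $P$ is stably finite since $A$ is (the inclusion has finite index). Thus by \cite[Corollary~4.6]{Rordam:Z-absorbing} it suffices to show that whenever $x, y \in W(P)$ satisfy $d_\tau(x) < d_\tau(y)$ for every $\tau \in \mathrm{T}(P)$, one has $x \leq y$ in $W(P)$.

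So let $a, b \in M_n(P)^+$ represent $x, y$. By Proposition~\ref{prp:tracial states} the restriction map $\mathrm{T}(A) \to \mathrm{T}(P)$ is a bijection, so every $\sigma \in \mathrm{T}(A)$ restricts to some $\tau \in \mathrm{T}(P)$. Because $f_\varepsilon(a), f_\varepsilon(b) \in M_n(P)$ for every $\varepsilon > 0$, we have $d_\sigma(a) = d_\tau(a)$ and $d_\sigma(b) = d_\tau(b)$. Hence the hypothesis yields $d_\sigma(a) < d_\sigma(b)$ for every $\sigma \in \mathrm{T}(A)$, and strict comparison in the exact, simple, unital algebra $A$ gives $a \preceq b$ in $M_n(A)$.

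Finally, I would transport this subequivalence back to $M_n(P)$. The amplification $E \otimes \id_{M_n} \colon M_n(A) \to M_n(P)$ is again a conditional expectation of index finite type, and it inherits the tracial Rokhlin property: if $e \in A' \cap A^\infty$ is a Rokhlin projection for $E$, then $e \otimes 1_{M_n}$ lies in $M_n(A)' \cap M_n(A)^\infty$ with $(\Index(E \otimes \id))(E \otimes \id)^\infty(e \otimes 1_{M_n}) = g \otimes 1_{M_n}$, whose complement is the expected matrix amplification. Applying Proposition~\ref{prp:Cuntz equivalent} to the inclusion $M_n(P) \subset M_n(A)$ then gives $a \preceq b$ in $M_n(P)$, i.e., $x \leq y$ in $W(P)$. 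The main technical point is this last step: Proposition~\ref{prp:Cuntz equivalent} is literally stated for positive elements of $P$ only, so one has to verify that its proof (and the tracial Rokhlin property itself) passes cleanly to the matrix amplification, which is routine because the Rokhlin projection commutes with the scalar matrix units of $M_n$.
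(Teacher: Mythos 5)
Your proposal follows essentially the same route as the paper's own proof: reduce strict comparison to comparison by traces via \cite[Corollary~4.6]{Rordam:Z-absorbing}, transfer the trace inequality from $\mathrm{T}(P)$ to $\mathrm{T}(A)$ using Proposition~\ref{prp:tracial states}, invoke strict comparison in $A$, and pull the Cuntz subequivalence back down via Proposition~\ref{prp:Cuntz equivalent} applied to the matrix amplification $E \otimes \id \colon M_n(A) \to M_n(P)$. Your added remark that the Rokhlin projection $e \otimes 1_{M_n}$ witnesses the tracial Rokhlin property for the amplified inclusion is a detail the paper asserts without proof, and it is correct.
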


\begin{proof}
Since $E\colon A \rightarrow P$ is of index finite type and $A$ is simple, 
$P$ is exact and simple. Note that the strict comparison property is equivalent to the 
the strict comparison property given by traces, i.e., 
for all $x, y \in \mathcal{C}(A)$ one has that $x \leq y$ if $d_\tau(x) < d_\tau(y)$ for all tracial states $\tau$ 
on $A$ (see \cite[Remark~6.2]{RW} \cite{Rordam:Z-absorbing}).

Since  $E \otimes id\colon A \otimes M_n \rightarrow P \otimes M_n$ is of index 
finite type and has the tracial Rokhlin property, it suffices to verify the condition that  
whenever $a, b \in P$ are positive elements such that $d_\tau(a) < d_\tau(b)$ for 
all $\tau \in \mathrm{T(P)}$, then $a \preceq b$. 

Let $a, b \in P$ be positive elements such that $d_\tau(a) < d_\tau(b)$ for 
all $\tau \in \mathrm{T(P)}$. Then by Proposition~\ref{prp:tracial states}
we have $d_\tau(a) < d_\tau(b)$ for all tracial states $\tau \in \mathrm{T}(A)$. 
Since $A$ has strict comparison, $x \preceq y$ in $A$. Therefore
By Proposition~\ref{prp:Cuntz equivalent} $a \preceq b$ in $P$, and $P$ has strict comparison.
\end{proof}

\vskip 3mm

\begin{cor}\label{thm:ComparisonTheorem}
Let $P \subset A$ be an inclusion of unital C*-algebras with index finite type.
Suppose that $A$ is simple and the order on 
projections on $A$ is determined by traces, and $E\colon A \rightarrow P$ 
has the tracial Rokhlin property. 
Then the order on projections on $P$ is determined by traces.
\end{cor}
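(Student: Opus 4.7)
The plan is to mirror the strategy of Theorem~\ref{thm:ComparisonTheorem}, but restricted to the projection ordering. Suppose $p, q \in M_n(P)$ are projections with $\tau(p) < \tau(q)$ for every tracial state $\tau$ on $M_n(P)$; the goal is to conclude that $p$ is Murray--von Neumann subequivalent to $q$ in $M_n(P)$.

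First I would pass to the amplified inclusion $M_n(P) \subset M_n(A)$ equipped with the conditional expectation $E \otimes \mathrm{id}_{M_n}$, which is again of index-finite type and retains the tracial Rokhlin property (as already used in the proof of Theorem~\ref{thm:ComparisonTheorem}); note that $M_n(A)$ remains simple and inherits the property that the order on projections is determined by traces. Next I would apply Proposition~\ref{prp:tracial states} to this amplified inclusion to obtain a bijection from $\mathrm{T}(M_n(A))$ onto $\mathrm{T}(M_n(P))$ via restriction. Thus every $\tau \in \mathrm{T}(M_n(P))$ extends uniquely to some $\tilde{\tau} \in \mathrm{T}(M_n(A))$ satisfying $\tilde{\tau}(p) = \tau(p) < \tau(q) = \tilde{\tau}(q)$, and the hypothesis on $A$ then yields $p \preceq q$ in $M_n(A)$ in the Murray--von Neumann sense, which in particular gives Cuntz subequivalence.

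Finally, Proposition~\ref{prp:Cuntz equivalent} (applied to the amplified inclusion) transfers the Cuntz subequivalence $p \preceq q$ back to $M_n(P)$. Because $p$ and $q$ are projections, Cuntz subequivalence in $M_n(P)$ forces Murray--von Neumann subequivalence there, by the standard fact that a projection which is Cuntz dominated by another projection is unitarily equivalent to a subprojection of it (one approximates $p = f_{1/2}(p)$ by $v^* q v$ and extracts a partial isometry from the polar decomposition). The main subtlety I anticipate is verifying that Proposition~\ref{prp:tracial states} genuinely applies to $E \otimes \mathrm{id}_{M_n}$; but since that proposition requires only finite index type together with the tracial Rokhlin property, and both conditions pass to the matrix amplification, no additional work is needed there.
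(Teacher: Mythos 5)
Your proposal follows essentially the same route as the paper: reduce to the amplified inclusion $E\otimes\mathrm{id}\colon M_n(A)\to M_n(P)$, use Proposition~\ref{prp:tracial states} to transfer the trace inequality to $A$, invoke the hypothesis to get Murray--von Neumann (hence Cuntz) subequivalence in $A$, and pull it back to $P$ via Proposition~\ref{prp:Cuntz equivalent}, finishing with the standard equivalence of Cuntz and Murray--von Neumann subequivalence for projections (which the paper cites as \cite[Proposition~2.1]{Ro:UHF2}). The argument is correct and complete.
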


\begin{proof}

Since  $E \otimes id\colon A \otimes M_n \rightarrow P \otimes M_n$ is of index 
finite type and has the tracial Rokhlin property, it suffices to verify the condition that  
whenever $p, q \in P$ are projections such that $\tau(p) < \tau(q)$ for 
all $\tau \in \mathrm{T(P)}$, then $p$ is the Murray-von Neumann equivalent to a subprojection of $q$ in $P$.

Let $p, q \in P$ be projections such that $\tau(p) < \tau(q)$ for 
all $\tau \in \mathrm{T(P)}$. Then by Proposition~\ref{prp:tracial states}
we have $\tau(p) < \tau(q)$ for all tracial states $\tau \in \mathrm{T}(A)$. 
Since the order on projections on $A$, $p$ is the Murray-von Neumann subequivalnt to $q$ in $A$, and 
$p \preceq q$ in $A$ by \cite[Proposition~2.1]{Ro:UHF2}. Therefore
By Proposition~\ref{prp:Cuntz equivalent} $p \preceq q$ in $P$, 
and $p$ is the Murray-von Neumann equivalent to a subprojection of $q$ in $P$.
Hence the order on projections on $P$ is determined by traces.
\end{proof}

\vskip 3mm

The following is well-known.

\vskip 3mm

\begin{lem}\label{lem:comparison}
let $A$ be an exact C*-algebra and $p$ be a projection of $A$.
Suppose that $A$ has strictly comparison. Then so does $pAp$.  
\end{lem}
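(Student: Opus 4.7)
The plan is to descend strict comparison from $A$ to $pAp$ by showing that $W(pAp)$ embeds as an ordered subsemigroup of $W(A)$, and then invoking the standard equivalence between strict comparison and almost unperforation of the Cuntz semigroup.

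First I would check that for $a,b\in M_n(pAp)^+$, Cuntz subequivalence in $M_n(pAp)$ coincides with Cuntz subequivalence in $M_n(A)$. Letting $p_n$ denote the unit of $M_n(pAp)$, regarded as a projection in $M_n(A)$, the nontrivial direction is this: if $x_k\in M_n(A)$ satisfies $x_k^*bx_k\to a$, set $y_k=p_nx_kp_n\in M_n(pAp)$; using $p_nap_n=a$ and $p_nbp_n=b$,
\[
y_k^*by_k=p_nx_k^*p_nbp_nx_kp_n=p_n(x_k^*bx_k)p_n\to p_nap_n=a,
\]
so $a\preceq b$ in $M_n(pAp)$. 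Hence the natural map $W(pAp)\to W(A)$ is an order-embedding of ordered abelian semigroups.

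Next I would pass through almost unperforation. Recall that $W(A)$ is \emph{almost unperforated} if $(k+1)\langle a\rangle\le k\langle b\rangle$ for all $k\in\N$ implies $\langle a\rangle\le\langle b\rangle$. For exact C*-algebras, strict comparison in the sense of the paper is equivalent to $W(A)$ being almost unperforated; this rests on the Blackadar--Handelman correspondence between lower semicontinuous dimension functions and $2$-quasitraces, together with Haagerup's theorem identifying quasitraces with traces on exact algebras (see \cite{Rordam:Z-absorbing}). Almost unperforation is manifestly hereditary to ordered subsemigroups: any inequality $(k+1)\langle a\rangle\le k\langle b\rangle$ in $W(pAp)$ persists in $W(A)$ via the embedding, whence $\langle a\rangle\le\langle b\rangle$ in $W(A)$, and pulling back through the order-reflecting embedding gives $\langle a\rangle\le\langle b\rangle$ in $W(pAp)$.

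Finally, since $pAp$ inherits exactness from $A$, the almost unperforation of $W(pAp)$ yields strict comparison for $pAp$ by the same R{\o}rdam equivalence in reverse. The only substantive obstacle is the translation between the paper's dimension-function formulation of strict comparison and the Cuntz-semigroup almost-unperforation formulation; this is where exactness of $A$ enters in an essential way, and it is standard by the references cited above.
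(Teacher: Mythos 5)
Your proposal is correct, and its one genuinely computational ingredient --- compressing an approximant $x_k$ with $x_k^*bx_k\to a$ down to $y_k=p_nx_kp_n$ and checking $y_k^*by_k\to a$ --- is exactly the device the paper uses. But the surrounding architecture is different. The paper argues directly at the level of dimension functions: given $d_\tau(a)<d_\tau(b)$ for all $\tau\in\mathrm{T}(pAp)$, it restricts each trace on $A$ to $pAp$ to conclude $d_\rho(a)<d_\rho(b)$ for all $\rho\in\mathrm{T}(A)$, applies strict comparison in $A$ once to get $a\preceq b$ in $A$, and then compresses. You instead package the compression as the statement that $W(pAp)\to W(A)$ is an order-embedding, transfer almost unperforation across that embedding, and then translate back and forth between almost unperforation and strict comparison via R{\o}rdam's equivalence. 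Your route is cleaner conceptually (the hereditariness of almost unperforation under order-embeddings is transparent) but costs more imported machinery, since you need the equivalence in both directions; the paper's route uses only the easy direction implicitly. One caveat applies to both arguments: the equivalence between strict comparison and almost unperforation (equivalently, the reduction to the tracial formulation that the paper invokes in its first sentence) is stated in \cite{Rordam:Z-absorbing} for \emph{simple} exact unital C*-algebras, and simplicity is not among the hypotheses of the lemma as written. This is harmless for the intended application (where $A=M_n(A^\alpha)$ is simple), and it is a gap shared with the paper's own proof rather than one you introduced, but if you want your argument to stand alone you should either add simplicity to the hypotheses or verify that the semigroup-level implications you use hold in the generality you claim.
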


\begin{proof}
Since for each $n \in \N$  $M_n(A)$ is exact and has strictly comparison, we have only to show that 
whenever $a, b \in pAp$ are positive elements such that $d_\tau(a) < d_\tau(b)$ for all tracial states $\tau \in \mathrm{T}(pAp)$,
then $a \preceq b$. 

Let $a, b \in pAp$ be positive elements such that $d_\tau(a) < d_\tau(b)$ for all tracial states $\tau \in \mathrm{T}(pAp)$. 
Then for any tracial $\rho \in \mathrm{T}(A)$ the restriction $\rho_{|pAp}$ belongs to $\mathrm{T}(pAp)$. 
Since $d_\rho(a) = d_{\rho_{|pAp}}(a) < d_{\rho_{|pAp}}(b) = d_\rho(b)$, we have $a \preceq b$ in $A$ by the assumption.
Hence there is a sequence $\{x_n\} \subset A$ such that $\|x_n^*bx_n - a\| \rightarrow 0$. 
Set $y_n = px_np \in pAp$ for each $n \in \N$, then 
\begin{align*}
\|y_n^*by_n - a \| &= \|px_n^*pbpx_np - pap\|\\
&= \|px_n^*bx_np - pap\|\\
&\leq \|x_n^*bx_n - a\| \rightarrow 0,
\end{align*}
and $a \preceq b$ in $pAp$. Therefore, $pAp$ has strict comparison.
\end{proof}

\vskip 3mm

\begin{cor}
Let $A$ be an infinite dimensional simple separable unital C*-algebra and 
let $\alpha\colon G \rightarrow \Aut(A)$ be an action of a finite group $G$ 
with the tracial Rokhlin property. Suppose that $A$ is exact and has strict comparsion. 
Then we have 
\begin{enumerate}
\item \cite{HO} The fixed point algebra $A^\alpha$ and the crossed product $A \rtimes_\alpha G$
have strict comparioson.
\item For any subgroup $H $ of $G$ the fixed point algebra $A^H$ has strict comparison.
\end{enumerate}
\end{cor}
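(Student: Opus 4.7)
The plan is to reduce both statements to Theorem~7.2 (\texttt{thm:ComparisonTheorem}) together with the descent result for the tracial Rokhlin property in Proposition~3.7 (\texttt{prp:tracial for finite group}), mirroring the argument already used for the $\mathcal{Z}$-absorbing corollary at the end of Section~5.

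For part (1), I would first invoke Proposition~3.7 to pass from the group action $\alpha$ having the tracial Rokhlin property to the canonical conditional expectation $E\colon A \to A^\alpha$ (which is automatically of index-finite type, with index $|G|$) having the tracial Rokhlin property for the inclusion $A^\alpha \subset A$. Since $A$ is assumed simple, exact, unital and to have strict comparison, Theorem~7.2 applies directly and yields that $A^\alpha$ has strict comparison. For the crossed product, I would use the standard Morita-type identification $A \rtimes_\alpha G \cong p M_n(A^\alpha) p$ for some projection $p \in M_n(A^\alpha)$, where $n = |G|$. Exactness is preserved under inclusions, matrix amplifications and corners, so $p M_n(A^\alpha) p$ is exact; strict comparison passes to $M_n(A^\alpha)$ trivially from $A^\alpha$ (the Cuntz semigroup is unchanged up to matrices) and then to the corner $p M_n(A^\alpha) p$ by Lemma~7.4 (\texttt{lem:comparison}). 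This gives strict comparison for $A \rtimes_\alpha G$.

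For part (2), the argument reduces to part (1) after restricting the action to $H$. Namely, by \cite[Lemma~5.6]{ESWW}, if $\alpha\colon G \to \Aut(A)$ has the tracial Rokhlin property, then so does the restricted action $\alpha_{|H}\colon H \to \Aut(A)$. The fixed-point algebra for the restriction is $A^H$, and since $A$ remains simple, exact and with strict comparison, part (1) applied to $\alpha_{|H}$ yields that $A^H$ has strict comparison.

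The two steps that carry any real content are not the group-theoretic reductions (which are a verbatim reuse of the scheme in Corollary~5.6) but the underlying inputs, namely Theorem~7.2 and Lemma~7.4; once these are granted, the proof is just a bookkeeping exercise. The only mildly delicate point is ensuring that the exactness hypothesis needed for Theorem~7.2 is transferred correctly: $A^\alpha \subset A$ of finite index preserves exactness, and for $\alpha_{|H}$ the same observation applies to $A^H \subset A$, so no extra argument is required.
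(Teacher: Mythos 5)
Your proposal is correct and follows essentially the same route as the paper: Proposition~\ref{prp:tracial for finite group} to transfer the tracial Rokhlin property to the conditional expectation, Theorem~\ref{thm:ComparisonTheorem} for the fixed point algebra, the identification $A \rtimes_\alpha G \cong pM_n(A^\alpha)p$ together with Lemma~\ref{lem:comparison} for the crossed product, and \cite[Lemma~5.6]{ESWW} to reduce part (2) to part (1). Your explicit remarks on the preservation of exactness and on passing strict comparison through matrix amplification are details the paper leaves implicit, but the argument is the same.
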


\begin{proof}
(1) Since the canonical conditional expectation $E\colon A \rightarrow A^\alpha$ has the tracial Rokhlin property 
by Proposition~\ref{prp:tracial for finite group}, $A^\alpha$ has strict comparison 
by Theorem~\ref{thm:ComparisonTheorem}. 

Let $|G| = n$. Then $A \rtimes_\alpha G$ is isomorphic to $pM_n(A^\alpha)p$ for some 
projection $p \in M_n(A^\alpha)$. Since $A^\alpha$ has strict comparison, $pM_n(A^\alpha)p$ 
has strict comparison by Lemma~\ref{lem:comparison}, hence $A \rtimes_\alpha G$ has strict comparison.

(2): Since $\alpha_{|H}\colon H \rightarrow \Aut(A)$ has the tracial Rokhlin property by 
\cite[Lemma~5.6]{ESWW}, we know that $A^H$ has strict comparison by (1).
\end{proof}

\vskip 3mm

Similarly we have the following.

\vskip 3mm

\begin{cor}
Let $A$ be an infinite dimensional simple separable unital C*-algebra and 
let $\alpha\colon G \rightarrow \Aut(A)$ be an action of a finite group $G$ 
with the tracial Rokhlin property. Suppose that the order on projections on $A$ is 
determined by traces. 
Then we have 
\begin{enumerate}
\item \cite{HO} The order on projections on the fixed point algebra $A^\alpha$ and the crossed product $A \rtimes_\alpha G$
is determined by traces.
\item For any subgroup $H $ of $G$ the order on projections on the fixed point algebra $A^H$ is determined by traces.
\end{enumerate}
\end{cor}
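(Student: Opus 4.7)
The plan is to mimic the proof of the preceding corollary on strict comparison, replacing every appeal to Theorem~\ref{thm:ComparisonTheorem} by its projection-analogue Corollary~\ref{thm:ComparisonTheorem} (the one obtained from Proposition~\ref{prp:Cuntz equivalent} asserting that if the order on projections on $A$ is determined by traces then so is the order on projections on $P$), and replacing Lemma~\ref{lem:comparison} by an easy analogue for the ``order on projections determined by traces'' property. So the structure is: handle the fixed point algebra by the tracial Rokhlin inclusion result, bootstrap to the crossed product via the Morita equivalence $A\rtimes_\alpha G \cong pM_n(A^\alpha)p$, then reduce subgroups to the whole group using \cite[Lemma~5.6]{ESWW}.

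For part (1), I would first invoke Proposition~\ref{prp:tracial for finite group} to conclude that the canonical conditional expectation $E\colon A\to A^\alpha$ has the tracial Rokhlin property. Then the hypothesis that the order on projections on $A$ is determined by traces, combined with Corollary~\ref{thm:ComparisonTheorem} (the projection version), gives that the same property holds for $A^\alpha$. For the crossed product, using $|G|=n$ and the standard isomorphism $A\rtimes_\alpha G \cong p M_n(A^\alpha) p$ for a projection $p\in M_n(A^\alpha)$, I would apply the following auxiliary fact: if the order on projections on a C*-algebra $B$ is determined by traces, then the same holds for $pM_k(B)p$ for any nonzero projection $p\in M_k(B)$. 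The argument for this fact is parallel to Lemma~\ref{lem:comparison}: any $\tau\in\mathrm{T}(pM_k(B)p)$ extends (after normalization) to a trace on $M_k(B)$ and hence restricts from a trace on $B$; conversely any trace on $B$ descends to $pM_k(B)p$. So if $e,f\in pM_k(B)p$ are projections with $\tau(e)<\tau(f)$ for every $\tau\in\mathrm{T}(pM_k(B)p)$, then the corresponding inequality holds over $M_k(B)$, giving $e\preceq f$ there, and one verifies this subequivalence actually holds within $pM_k(B)p$ using a cut-down argument.

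For part (2), since $\alpha|_H\colon H\to \Aut(A)$ has the tracial Rokhlin property by \cite[Lemma~5.6]{ESWW}, part (1) applied to the action of $H$ immediately gives that the order on projections on $A^H$ is determined by traces.

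The only nontrivial step is the auxiliary fact that the ``order determined by traces'' property passes to corners. This is essentially the projection analogue of Lemma~\ref{lem:comparison}; the potential obstacle is not the tracial bookkeeping (which is routine) but ensuring that a Murray--von Neumann subequivalence of projections $e\preceq f$ obtained in $M_k(B)$ can be realized inside $pM_k(B)p$. This follows because if $v\in M_k(B)$ satisfies $v^*v=e$ and $vv^*\le f$ with $e,f\le p$, then $w:=pvp=v$ already lies in $pM_k(B)p$ and witnesses the subequivalence there. Once this observation is in place, the rest of the proof is purely formal.
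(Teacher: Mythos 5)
Your proposal is correct and matches the paper's intent exactly: the paper gives no separate proof for this corollary beyond the remark ``Similarly we have the following,'' meaning one is to repeat the proof of the strict-comparison corollary with Corollary~7.3 (the projection version of Theorem~7.2) in place of Theorem~7.2 and a corner-passing analogue of Lemma~7.4, which is precisely what you do. Your additional care in checking that the Murray--von Neumann subequivalence obtained in $M_k(B)$ already lives in the corner $pM_k(B)p$ (via $pv=vp=v$) is a detail the paper leaves implicit, and it is handled correctly.
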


{\bf Acknowledgement}
The authors thank Professor Yasuhiko Sato for fruitful discussion, in particlular,
about Lemma~\ref{lem:MNequivalent}.



\begin{thebibliography}{99}

\bibitem{DA} D.~Archey,
{\it Crossed product C*-algebras by finite group actions 
with the tracial Rokhlin property},
arXiv:0902.2865.


\bibitem{BH} B.~Blackadar and D.~Handelman, 
{\it Dimension functions and traces on C*-algebras},
J.~Funct.~Anal.~\textbf{45}(1982), 297 - 340.











\bibitem{Cu} J.~Cuntz,
{\it Dimension functions on simple C*-algebras},
Math.~Ann.~233(1978), 145 - 153.


\bibitem{EN} G.\ A.\ Elliott and Z.\ Niu,
{\it On tracial approximation},
Journal of  Functional Analysis \textbf{254}(2008), 396 - 440.


\bibitem{ESWW}  S.~Echterhoff, W.~Luck, N.~C.~Phillips, S.~Walters, 
{\it The structure of crossed products of irrational rotation algebras by finite subgroups of $SL _2(\Z)$}
J. Reine Angew. Math. \textbf{639}(2010), 173--221.


\bibitem{FF} Q.\ Fan and X.\ Fang,
{\it Stable rank one and real rank zero for crossed products by 
finite group actions with the tracial Rokhlin property},
Chinese Anal. Math. \textbf{30}(2009), 179 - 186.




\bibitem{Ha} U.~Haagerup,
{\it Every quasi-trace on an exact C*-algebra is a trace}, preprint (1991).






\bibitem{HO} I.~Hirshberg and J. Orovitz,
{\it Tracially $\mathcal{Z}$-absorbing C*-algebras},
J. Funct. Anal.~\textbf{265}(2013), 765-785.





\bibitem{Izumi:Rohlin1} M.\ Izumi, 
{\it Finite group actions on $C^*$-algebras with the Rohlin property--I},
Duke Math.\ J.\,
{\bf 122}(2004), p.\ 233--280. 







\bibitem{Jiang-Su:absorbing}  X.\ Jiang and H.\ Sue, 
{\it On a simple unital projectionless C*-algebras}
Amer.\ J.\ Math
{\bf 121}(1999), p.\ 359--413. 




\bibitem{Kir} E.~Kirchberg, 
{\it On the existence of traces on exact stably projectionless simple C*-algebras},
in Operator Algebras and their Applications, eds. P. A. Fillmore and J. A. Mingo,
Fields Institute Communications, Vol. 13 (American Mathematical Society, 1995),
pp. 171--172.






\bibitem{Kishimoto:81} A.\ Kishimoto, 
{\it Outer automorphisms and reduced crossed products of 
simple C*-algebras}, Commun.\ Math.\ Phys.\ \textbf{81}(1981), 429 - 435..












\bibitem{KOT} K.~Kodaka, H.~Osaka, and T.~Teruya,
{\it The Rohlin property for inclusions of C*-algebras with a finite Watatani Index}, 
Operator structures and Dynamical Systems, Contemporary Mathematics 503(2009), 179 - 195. 



\bibitem{Lin:tracial} H.\ Lin,
{\it Tracial AF C*-algebras},
Trans.\ Amer.\ Math.\ Soc.\ 353(2001), 693--722.


\bibitem{Lin:book} H.\ Lin,
{\it An Introduction to the Classification of Amenable C*-algebras},
World Scientific, River Edge NJ, 2001.


\bibitem{Lin:tracial one} H.\ Lin,
{\it Simple nuclear C*-algebras of tracial topological rank one},
J.~Funct.~Anal.~251(2007), 601--679,
arXiv:math.OA/0401240, 2004.



\bibitem{Loring:lifting} T.\ A.\ Loring, 
{\it Lifting Solutions to Perturbing Problems in $C^*$-algebras}, 
Fields Institute Monographs no.\ 8, 
American Mathematical Society, Providence RI, 1997.


\bibitem{MS} H.\ Matui and Y.\ Sato,
{\it Strict comparison and Z -absorption of nuclear C*-algebras},
 Acta Math. \textbf{209}(2012),  no. 1, 179--196.





\bibitem{O:SP-property} H.\ Osaka,
{\it SP-Property for a pair of C*-algebras},
J.\ Operator Theory \textbf{46}(2001), 159 - 171.



\bibitem{OP:Rohlin}H.\ Osaka and N.\ C.\ Phillips, 
{\it Crossed products by finite group actions with the Rokhlin property}, 
Math.~Z.~270(2012), 19--42, 
arXiv:math.OA/0704.3651.


\bibitem{OP:tracial Rokhlin} H.\ Osaka and N.\ C.\ Phillips,
in preparation.



\bibitem{OT} H.\ Osaka and T.\ Teruya,
{\it Strongly self-absorbing property for inclusions of C*-algebras with a 
finite Watatani index}, Trans.~Amer.~Math.~Soc. \textbf{366}(2014) no. 3 1685--1702.


\bibitem{OT2} H.\ Osaka and T.\ Teruya,
{\it Nuclear dimension and pureness for an inclusion of unital C*-algebras},
preprint, arXiv:1111.1808.






\bibitem{Phillips:tracial} N.\ C.\ Phillips,
{\it The tracial Rokhlin property for  actions of finite groups on $C^*$-algebras}
Amer.~J.~Math.~133(2011), no. 3, 581--636, arXiv:math.OA/0609782.








\bibitem{Ro:UHF} M.~Rordam,
{\it On the structure of simple C*-Algebras tensored with a UHF-algebra},
J.~Funct.~Anal.~\textbf{100}(1991), 1 - 17.


\bibitem{Ro:UHF2} M.~Rordam,
{\it On the structure of simple C*-Algebras tensored with a UHF-algebra II},
J.~Funct.~Anal.~\textbf{107}(1992), 255 - 269.


\bibitem{Ro} M.~R\o rdam,
{\it Classification of nuclear, simple C*-algebras}, 
Encyclopaedia Math.~Sci.~, 126, Springer, Berlin, 2002.


\bibitem{Rordam:Z-absorbing} M.\ Rordam,
{\it The stable and the real rank of $\mathcal{Z}$-absorbing C*-algebras},
Inter. J. Math. \textbf{10}(2004), 1065--1084.


\bibitem{RW} M.~R\o rdam and W.~Winter,
{\it The Jiang-Su algebra revisited},
J. reine angew.~Math.~\textbf{642}(2010), 129 - 155.






\bibitem{Sato3} Y.~Sato, private communication.







\bibitem{T} A.\ S.\ Toms,
{\it Characterizing classifiable AH algebras},
C.\ R.\ Math.\ Acad.\ Sci.\ Soc.\ R.\ Can.\ \textbf{33}(2011),  no. 4, 123--126.


\bibitem{TW} A.\ S.\ Toms and W.\ Winter, 
{\it Strongly self-absorbing C*-algebras},
Trans.\ Amer.\ Math.\ Soc.\ {\textbf{359}}(2007), 
3999 - 4029.


\bibitem{Watatani:index} Y.\ Watatani,
{\it Index for $C^*$-subalgebras},
Mem.\ Amer.\ Math.\ Soc.\,
{\bf 424},
Amer.\ Math.\ Soc., \ Providence, R.\ I.,
(1990).


\bibitem{Winter:Covering dimension I} W.\ Winter,
{\it Covering dimension for nuclear C*-algebras I},
J.\ Funct.\ Anal.\ 199(2003), 535- 556.


\bibitem{WZ} W.\ Winter and J.\ Zacharias,
{\it The nuclear dimension of C*-algebras}, 
Adv.\ Math.\ \textbf{224}(2010), 461--498.


\bibitem{YF} X.~Yang and X.~Fang,
{\it The tracial class property for crossed products by finite group actions},
Abstract and Applied Analysis 2012, Article ID 745369.




\end{thebibliography}
\end{document}